\newtheorem{theorem}{Theorem}[section]
\newtheorem{corollary}{Corollary}[section]
\newtheorem{lemma}{Lemma}[section]
\newtheorem{remark}{Remark}[section]
\newtheorem{proposition}{Proposition}[section]
\def \<{\langle}
\def \>{\rangle}
\newcommand{\bea}{\begin{eqnarray}}
\newcommand{\eea}{\end{eqnarray}}
\newcommand{\be}{\begin {equation}}
\newcommand{\ee}{\end{equation}}
\newcommand{\WW}{\mathcal{W}}
\newcommand{\W}{\mathcal{W}_3\left(\frac{4}{5}\right)}
\newcommand{\ds}{\displaystyle}
\newcommand{\nb}{{\mbox {\tiny{ ${\bullet \atop \bullet}$}}}}
\newcommand{\4}{\frac{4}{5}}
\newcommand{\8}{\frac{1}{8}}
\newcommand{\fth}{\frac{13}{8}}
\newcommand{\ZZ}{\mathbb{Z}}
\begin{document}
\title[The $3$-state Potts model and Rogers-Ramanujan series]  {The $3$-state Potts model and Rogers-Ramanujan series}
\author{Alex J. Feingold}
\author{Antun Milas}
\address{Department of Mathematical Sciences, SUNY-Binghamton}
\address{Department of Mathematics and Statistics, SUNY-Albany}
\email{alex@math.binghamton.edu}
\email{amilas@albany.edu}

\begin{abstract} We explain the  appearance of Rogers-Ramanujan series inside the tensor product of two basic  
$A_2^{(2)}$-modules, previously discovered by the first author in \cite{F}.
The key new ingredients are $(5,6)$ Virasoro minimal models and twisted modules for the Zamolodchikov $\WW_3$-algebra. 
\end{abstract}

\maketitle

\section{Introduction}
Virasoro minimal models and their characters have a long (and fruitful) history in conformal field theory, string theory and of course in vertex algebra theory.
It is by now well-known that the character $\chi_{s,t}^{m,n}(q)$ of the minimal model $L(c_{s,t},h_{s,t}^{m,n})$ is given by 
$$\chi_{s,t}^{m,n}(q) 
= \frac{q^{(h_{s,t}^{m,n}-c_{s,t}/24)}}{(q)_\infty} \cdot \displaystyle{\sum_{k \in \ZZ}} q^{st k^2}(q^{k(mt-ns)}-q^{(mt+ns)k+mn}),$$  
where $2\leq s,t\in\ZZ$ are coprime, $1\leq m < s$, $1\leq n < t$, $(q)_\infty = \phi(q) = \prod_{i= 1}^\infty (1 - q^i)$ and 
\be \label{cst}
c_{s,t}=1-\frac{6(s-t)^2}{st}, \ \ h_{s,t}^{m,n}=\frac{(mt-ns)^2-(s-t)^2}{4 st}.
\ee
In particular,
\bea \label{rr1}
&& \chi_{2,5}^{1,1}(q)=q^{11/60} \prod_{n =0}^\infty \frac{1}{(1-q^{5n+2})(1-q^{5n+3})}, \\
\label{rr2}
&& \chi_{2,5}^{1,2}(q)=q^{-1/60} \prod_{n = 0}^\infty \frac{1}{(1-q^{5n+1})(1-q^{5n+4})}.
\eea
give the product sides of the famous Rogers-Ramanujan series which appear 
in many papers on representation theory of Virasoro and affine Lie algebras.

The results in this note arose as an attempt to find a representation theoretic explanation of the following theorem obtained in 1983 
by the first author \cite{F}:
\begin{theorem} \label{alex} Denote by $L(\Lambda_1)$ the basic (level one) highest weight module for the twisted affine Kac-Moody algebra $A_2^{(2)}$. Then 
\be \label{basic-decomp}
L(\Lambda_1) \otimes L(\Lambda_1)=L(2 \Lambda_1) \otimes V_1 \oplus L(\Lambda_0) \otimes V_2,
\ee
such that  suitably normalized and scaled characters of the multiplicity spaces $V_1$ and $V_2$ coincide with the product sides of Rogers-Ramanujan series (\ref{rr1}-\ref{rr2}). 
\end{theorem}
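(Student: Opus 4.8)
\bigskip
\noindent\textbf{Proof proposal.}

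The plan is to convert the asserted decomposition into a single identity of two-variable characters and then to identify the resulting branching functions with Rogers--Ramanujan products. The horizontal subalgebra of $A_2^{(2)}$ has rank one, so every integrable highest weight module $L(\Lambda)$ has a character $\mathrm{ch}\, L(\Lambda)=\sum_{\lambda}c^{\Lambda}_{\lambda}(q)\,e^{\lambda}$, the sum over weights $\lambda\le\Lambda$ in the coset $\Lambda+Q$, with Weyl-invariant string functions $c^{\Lambda}_{\lambda}(q)$ (power series in $q$, up to an overall fractional power). First I would assemble the level one input: since $L(\Lambda_1)$ has level one, its string functions are quotients of classical theta functions by $(q)_\infty$, so the two-variable character $\mathrm{ch}\, L(\Lambda_1)$ is read off from the Weyl--Kac character formula (equivalently, from the twisted vertex operator realization on the $A_2$ lattice). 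The genuinely new input is the level two data --- the string functions of $L(2\Lambda_1)$ and of $L(\Lambda_0)$ --- which I would extract from the Kac--Peterson theta expansion attached to the Weyl--Kac formula for $A_2^{(2)}$ at level two.

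Next, complete reducibility of a tensor product of integrable modules gives
\be
L(\Lambda_1)\otimes L(\Lambda_1)=\bigoplus_{\mu}L(\mu)\otimes V_{\mu},
\ee
where the multiplicity spaces $V_\mu$ inherit a grading by energy, hence have one-variable characters $\chi_{V_\mu}(q)$. The tensor product is integrable of level two, with weights in $2\Lambda_1+Q$, and the only level two dominant integral weights of $A_2^{(2)}$ are $2\Lambda_1$ and $\Lambda_0$; hence $\mu\in\{2\Lambda_1,\Lambda_0\}$. Setting $V_1=V_{2\Lambda_1}$, $V_2=V_{\Lambda_0}$ and taking characters, the entire content of (\ref{basic-decomp}) becomes the single identity
\be
\bigl(\mathrm{ch}\, L(\Lambda_1)\bigr)^{2}=\mathrm{ch}\, L(2\Lambda_1)\cdot\chi_{V_1}(q)+\mathrm{ch}\, L(\Lambda_0)\cdot\chi_{V_2}(q).
\ee

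I would then solve this identity for $\chi_{V_1}$ and $\chi_{V_2}$ by comparing coefficients of two horizontal weights, one extremal for $L(2\Lambda_1)$ and one extremal for $L(\Lambda_0)$, chosen so that a single maximal string function contributes to each; since the maximal string functions are explicit, this exhibits $\chi_{V_1}$ and $\chi_{V_2}$ as explicit quotients of theta functions by $(q)_\infty$, and in particular shows both are nonzero. It remains to identify these quotients, after the rescaling by a power of $q$ permitted in the statement, with the product sides of (\ref{rr1})--(\ref{rr2}). I expect this last step to be the main obstacle: once the theta functions are cleared using the Jacobi triple product, what survives is exactly a sum-to-product statement of Rogers--Ramanujan type, so the argument genuinely passes through (a form of) the Rogers--Ramanujan identities; the alternative of producing bases of $V_1$ and $V_2$ in which the product shape is manifest amounts to a combinatorial reproof of those identities. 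The structural reason the branching functions are so rigidly pinned down --- and the point the rest of the paper makes precise --- is that $V_1\oplus V_2$ carries an action of the coset vertex algebra of $(A_2^{(2)})_1\otimes (A_2^{(2)})_1$ by its diagonal $(A_2^{(2)})_2$, which has central charge $\tfrac45$ and is the $\Z_2$-twisted Zamolodchikov $\WW_3$-algebra underlying the three-state Potts / $(5,6)$ minimal model; the Rogers--Ramanujan series (\ref{rr1})--(\ref{rr2}) then appear as the normalized characters of its relevant twisted modules.
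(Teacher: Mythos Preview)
The paper does not supply its own proof of this theorem; it is quoted from \cite{F}, where it was established via the principal ${\rm mod}\ 6$ vertex operator realization of $A_2^{(2)}$ on a twisted Fock space. Your character-theoretic outline is a legitimate alternative route to the same conclusion: complete reducibility together with the observation that $2\Lambda_1$ and $\Lambda_0$ exhaust the level-two dominant integral weights of $A_2^{(2)}$ forces the shape of (\ref{basic-decomp}), and comparison of string functions at two extremal horizontal weights then isolates $\chi_{V_1}$ and $\chi_{V_2}$. The original argument in \cite{F} works more concretely inside the explicit realization rather than via the abstract Weyl--Kac formula, but the content is the same. Your closing paragraph, anticipating the coset $\W$-algebra interpretation, is exactly the contribution of the present paper: it does not re-prove Theorem~\ref{alex} but explains it, identifying $V_1$ and $V_2$ with the two irreducible $\tau$-twisted $\W$-modules so that the Rogers--Ramanujan products appear through the Virasoro decompositions (\ref{minimal1})--(\ref{minimal2}).

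One honest caveat on your outline: the step you flag as ``the main obstacle'' really is one. After you solve for the branching functions as quotients involving level-two $A_2^{(2)}$ string functions, those string functions are not obviously products; their identification with Rogers--Ramanujan series (already for the principally specialized characters of $L(2\Lambda_1)$ and $L(\Lambda_0)$, as the paper notes just after Theorem~\ref{alex}) is itself a nontrivial Rogers--Ramanujan-type identity, and in \cite{F}, \cite{Ca}, \cite{X} this is where the substantive work lies. So your sketch is structurally correct but does not bypass the analytic identity --- which is consistent with your own assessment.
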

This result was obtained in the principal ${\rm mod} \ 6$ realization of $A_2^{(2)}$ also studied in  \cite{Ca}, \cite{X}, \cite{Bo}, so it is perhaps not obvious what is its reformulation it in the language of vertex algebras and twisted modules. 
The above result is somewhat unexpected because the coset spaces, which are clearly unitary modules for the Virasoro algebra, are essentially given by the characters of the $(s,t) = (2,5)$ {\em non-unitary} minimal models! It is also interesting to notice that the principally specialized characters of  $L(2 \Lambda_1)$ and $L(\Lambda_0)$ are 
also given by above Rogers-Ramanujan series (again, suitably normalized and scaled).

To explain the appearance of Rogers-Ramanujan $q$-series we recall first a pair of identities which can be traced 
back to Bytsko and Fring \cite{BF}. (For further identities, see \cite{BF} and also \cite{Muk}, \cite{Mi1}):
\bea \label{minimal1}
&& \chi_{5,6}^{1,2}(q)+\chi_{5,6}^{1,4}(q)=\chi_{2,5}^{1,1}(q^{1/2}), \\
\label{minimal2}
&& \chi_{5,6}^{2,2}(q)+\chi_{5,6}^{2,4}(q)=\chi_{2,5}^{1,2}(q^{1/2}).
\eea
We also mention the related identities 
\bea
\label{minimal3}
&& \chi_{5,6}^{2,1}(q)-\chi_{5,6}^{2,5}(q)=\chi_{2,5}^{1,1}(q^{2}),  \\
\label{minimal4}
&& \chi_{5,6}^{1,1}(q)-\chi_{5,6}^{1,5}(q)=\chi_{2,5}^{1,2}(q^{2}).
\eea
Although all these formulas can be checked directly (see the Appendix) 
they are very interesting for several reasons. Their right hand sides are clearly (scaled) Rogers-Ramanujan series. 
Also, unlike the identities studied in \cite{Muk} and \cite{Mi1}, the above formulas are among characters of modules of {\em different} central charges (in our case, central charge $\frac{-22}{5}$ and  $\frac{4}{5}$). The key observation is now that the central charge of the cosets  spaces $V_1$ and $V_2$ equals $\frac{4}{5}$, the central charge of $(5,6)$ minimal models, 
So we are immediately led to the following conclusion:  relations (\ref{minimal1})-(\ref{minimal2}) should be interpreted as decomposition formulas of the coset spaces $V_1$ and $V_2$ into 
irreducible Virasoro characters of central charge $\frac{4}{5}$.  As we shall see this is indeed the case. In fact,  
there is something even deeper going on. It turns out that $V_1$ and $V_2$ are in fact
(the only twisted) irreducible modules for a larger rational vertex algebra which we end up calling $\W$, also known as the Zamolodchikov $\WW_3$-algebra. This algebra has already appeared in the 
physics literature under the name $3$-State Potts model and more recently in the vertex algebra theory \cite{KMY}. 
We first show that there are (at least)  four different ways of thinking about $\W$:

\begin{theorem} \label{W} The following vertex operator algebras are isomorphic:
\begin{itemize}
\item[(a)] The parafermionic space $K_{sl_2}(3,0) \subset L_{sl_2}(3,0)$ \cite{DW}.

\item[(b)] A certain subalgebra $M^0$ of  the lattice vertex algebra $V_L$, where $L=\sqrt{2}Q$ and  $Q$ is the root lattice of type $A_2$
(see \cite{KMY}, \cite{Miy}).

\item[(c)] The coset  vertex algebra $\WW^{L_{sl_3}(2 \Lambda_0)}_{L_{sl_3}(\Lambda_0) \otimes L_{sl_3}(\Lambda_0)}$ \cite{KW}.

\item[(d)] The simple affine $\WW$-algebra ${\bf L}(\4)$, obtained via Drinfeld-Sokolov reduction \cite{Ara}, \cite{FKW}.
\end{itemize}
\end{theorem}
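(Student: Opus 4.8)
\medskip
\noindent\textbf{Sketch of the proof.}
The plan is to identify each of (a)--(d) with the simple Zamolodchikov algebra $\W = {\bf L}(\4)$ of item (d), using as a common thread the simple Virasoro minimal model $L(\4,0)$ of type $(5,6)$, which occurs in all four as a conformal vertex subalgebra. One first checks that each of the four has central charge $\4$: for (a) one subtracts the rank-one Heisenberg from $L_{sl_2}(3,0)$, $\frac{3\cdot 3}{3+2}-1=\4$; for (c) the coset central charge is $2\cdot\frac{8}{1+3}-\frac{8\cdot 2}{2+3}=\4$; for (d) it is $\4$ by definition; for (b) the distinguished conformal vector of $M^0$ singled out in \cite{KMY}, \cite{Miy} has central charge $\4$. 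Since $\4=c_{5,6}$, the Virasoro subalgebra generated by this conformal vector in each case --- the parafermionic Virasoro element in (a), the Goddard--Kent--Olive coset element in (c), the element $\omega$ in (d), the lattice-coset element in (b) --- is a copy of the simple Virasoro minimal model $L(\4,0)$.

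\medskip
The core assertion is then that, as a module over this copy of $L(\4,0)$, each of (a)--(d) decomposes as $L(\4,0)\oplus L(\4,3)$; here $h^{1,5}_{5,6}=3$, and by the symmetry $\phi_{r,s}\cong\phi_{5-r,6-s}$ the weight-$3$ summand is the $\phi_{1,5}=\phi_{4,1}$ sector. I would establish this in two halves. For the summand $L(\4,3)$: each algebra contains a Virasoro-primary vector of weight $3$ sitting on top of the vacuum copy of $L(\4,0)$ --- Zamolodchikov's generator $W$ for (d), the weight-$3$ generator of the parafermion algebra for (a), the weight-$3$ coset primary for (c), and a suitable combination of lattice vertex operators for (b). For the absence of further Virasoro composition factors: one computes the graded dimension of each algebra and verifies that it equals $\chi^{1,1}_{5,6}(q)+\chi^{1,5}_{5,6}(q)$ --- this is the classical $\WW_3$-minimal-model branching rule in case (d), follows from the parafermionic character of $L_{sl_2}(3,0)$ in case (a), from level--rank duality relating $sl_2$ at level $3$ with $sl_3$ at level $2$ in case (c), and is read off from the explicit lattice decomposition of \cite{KMY}, \cite{Miy} in case (b). Finally, $L(\4,3)$ is a simple current for $L(\4,0)$ of order two with integral conformal weight (indeed $\phi_{1,5}\times\phi_{1,5}=\phi_{1,1}$), so $L(\4,0)\oplus L(\4,3)$ admits a \emph{unique} simple vertex operator algebra structure extending $L(\4,0)$: the $\Z_2$ simple-current extension is rigid because the conformal weight $3$ is an integer, so the self-monodromy is trivial. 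As all four algebras realize this one extension, they are pairwise isomorphic.

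\medskip
The main obstacle is pinning down (c) as the \emph{simple} affine $\WW$-algebra (d), rather than as a larger extension of $L(\4,0)$ or as the non-simple universal $\WW_3$-algebra at that level. For this one invokes the coset realization of principal $\WW$-algebras (Arakawa--Creutzig--Linshaw, refining \cite{KW}, \cite{FKW}) together with the $C_2$-cofiniteness and rationality of the exceptional $\WW_3$-algebra $\W$ (Arakawa, \cite{Ara}); equivalently, and in keeping with the argument above, once the graded dimension of the coset has been pinned to $\chi^{1,1}_{5,6}(q)+\chi^{1,5}_{5,6}(q)$, the identification is forced. A less uniform alternative avoids the common-subalgebra bookkeeping and produces explicit isomorphisms directly: level--rank duality between $sl_2$ at level $3$ and $sl_3$ at level $2$ gives (a)$\cong$(c), Drinfeld--Sokolov reduction via \cite{Ara}, \cite{FKW} gives (c)$\cong$(d), and the lattice embedding of \cite{KMY}, \cite{Miy} gives (a)$\cong$(b); the Virasoro-subalgebra argument is merely the most economical self-contained way to organize these facts.
\epfv
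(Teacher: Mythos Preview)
Your strategy is the paper's strategy: the paper's Proposition~2.1 is precisely your uniqueness claim for the simple extension $L(\4,0)\oplus L(\4,3)$ (argued there via the one-dimensionality of $I\binom{L(\4,0)}{L(\4,3)\ L(\4,3)}$ and the vanishing of $I\binom{L(\4,3)}{L(\4,3)\ L(\4,3)}$, rather than in simple-current language), and the paper then checks the Virasoro decomposition case by case --- for (d) via the Frenkel--Kac--Wakimoto/Arakawa character formula together with a block argument for Verma modules (its Theorem~2.2), and for (c) via the character from \cite{KW} and the nontriviality of the weight-$3$ OPE from \cite{BBSS} (its Theorem~2.3). The paper does not actually prove the equivalence with (a), deferring to the physics literature; your level-rank duality and parafermion-character remarks fill that gap more cleanly.

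One point the paper isolates that you pass over quickly: to invoke uniqueness of the \emph{simple} extension, you must verify in each case that $Y|_{L(\4,3)\otimes L(\4,3)}\neq 0$, i.e.\ that the algebra is not the trivial abelian extension. The paper checks this explicitly for (d) (nonvanishing of $Y(w_{-1}{\bf 1},x)w_{-1}{\bf 1}$ in ${\bf L}(\4)$) and cites \cite{BBSS} for (c); for (a) and (b) it is immediate from simplicity of the parafermion and of $M^0$ as constructed in \cite{KMY}. Your sketch would be strengthened by making this nontriviality step explicit rather than folding it into the phrase ``all four algebras realize this one extension.''
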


In the physics literature equivalence of  $(a)$, $(c)$ and $(d)$ is more or less known. Construction $(b)$ is more recent (again, see \cite{KMY}).

Having enough knowledge about $\W$ we can now return to  the cosets $V_1$ and $V_2$. These are not ordinary modules for $\W$, but rather $\tau$-twisted $\W$-modules.
Our main result is the following theorem about $\W$.

\begin{theorem} \label{main-thm} For the rational vertex algebra $\W$ the following holds:
\begin{itemize}
\item[(a)] $Aut(\W)=\mathbb{Z}_2$.
\item[(b)] If we denote by $\tau$ the nontrivial automorphism of $\W$, then $\W$ is  $\tau$-rational (i.e. $\W$ has finitely many irreps and every $\tau$-twisted 
module is completely reducible).
\item[(c)] The algebra $\W$ has precisely two inequivalent representations, 
$$\WW^\tau(1/40)=L\left(\4,\frac{1}{40}\right) \oplus L\left(\4,\frac{21}{40}\right), \ \hbox{ and } $$
$$\WW^\tau(1/8)= L\left(\4,\frac{1}{8}\right) \oplus L\left(\4, \frac{13}{8}\right).$$
\item[(d)] $\chi(\WW^\tau(1/40)) = \chi_{2,5}^{1,2}(q^{1/2}) $ and $\chi(\WW^\tau(1/8)) = \chi_{2,5}^{1,1}(q^{1/2})$. 
\end{itemize}
\end{theorem}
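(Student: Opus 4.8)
The plan is to handle the four assertions in turn, using the realizations of $\W$ in Theorem~\ref{W}. \textbf{(a)} Any vertex operator algebra automorphism $\phi$ of $\W$ fixes the conformal vector $\om$, hence commutes with all Virasoro modes $L_{n}$ and preserves the subspace of Virasoro primary vectors. Using any of the presentations in Theorem~\ref{W} one sees that $\W$ is strongly generated by $\om$ and a single primary vector $W$ of conformal weight $3$ (here $\dim\W_{3}=2$, spanned by $L_{-3}\mathbf 1$ and $W$, and $L_{-3}\mathbf 1$ is not primary), so the weight-three primary subspace is $\C W$. Thus $\phi(W)=aW$ for some $a\in\C^{\times}$, and since the invariant bilinear form of the simple vertex algebra $\W$ pairs $W$ with itself nontrivially while $\phi$ preserves that form, $a^{2}=1$. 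Conversely, as the $\mathcal W_{3}$ operator product expansion of $W$ with itself involves only $\om$ and its descendants, $\om\mapsto\om$, $W\mapsto-W$ does extend to an automorphism $\tau$; hence $Aut(\W)=\{1,\tau\}\cong\ZZ_{2}$.

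\textbf{(b)--(c)} The key structural observation is that $\W$ is a $\ZZ_{2}$-graded simple current extension of its Virasoro subalgebra: as a module over $\mathrm{Vir}(\4)=L(\4,0)$, the $(5,6)$ minimal model, one has $\W\cong L(\4,0)\oplus L(\4,3)$ with $L(\4,3)$ the order-two simple current labelled $[4,1]$ in the Kac table, and $\tau$ acting as $+1$ on $L(\4,0)$ and $-1$ on $L(\4,3)$. Since $\mathrm{Vir}(\4)$ is rational and $C_{2}$-cofinite, a $\tau$-twisted $\W$-module $M$ restricts to an ordinary, hence completely reducible, $\mathrm{Vir}(\4)$-module, a finite direct sum of the ten irreducibles $L(\4,h_{5,6}^{m,n})$; and on any two summands $L(\4,h_{1}),L(\4,h_{2})$ between which $W$ acts nontrivially, $Y_{M}(W,z)$ yields a nonzero intertwining operator of type $\binom{L(\4,h_{2})}{L(\4,3)\ \ L(\4,h_{1})}$ with powers of $z$ in $\ZZ+\tfrac12$. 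The fusion rules then force $L(\4,h_{2})=[4,1]\times L(\4,h_{1})$, and the half-integer condition becomes $h_{2}-h_{1}\in\ZZ+\tfrac12$, equivalently the \emph{monodromy charge} $Q_{[4,1]}\bigl(L(\4,h_{1})\bigr)=h_{1}+3-h_{2}\ (\mathrm{mod}\ 1)$ equals $\tfrac12$. Next I would invoke the orbifold/simple-current correspondence for rational $C_{2}$-cofinite vertex algebras --- or, more concretely, transport the problem to the lattice model $M^{0}\subset V_{L}$ of Theorem~\ref{W}(b), where $\tau$-twisted modules are constructed explicitly --- to obtain both $\tau$-rationality (from rationality of $\mathrm{Vir}(\4)$) and the statement that the irreducible $\tau$-twisted $\W$-modules are precisely the pairs $L(\4,h_{1})\oplus L(\4,h_{2})$ arising from size-two $[4,1]$-orbits with monodromy charge $\tfrac12$. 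Going through the $(5,6)$ Kac table, exactly two orbits qualify, namely $\{L(\4,\tfrac1{40}),L(\4,\tfrac{21}{40})\}$ and $\{L(\4,\8),L(\4,\fth)\}$ --- the two modules of~(c) --- while the remaining orbits have monodromy charge $0$ and, after the usual fixed-point resolution, give the six ordinary $\W$-modules of the $3$-state Potts model.

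\textbf{(d)} Since the conformal vector lies in $\W^{\tau}$, its modes act with integer powers, so the $L_{0}$-grading of $\WW^{\tau}(1/40)$ and $\WW^{\tau}(1/8)$ is inherited from the Virasoro summands found in~(c):
\[
\chi\bigl(\WW^{\tau}(1/40)\bigr)=\chi_{5,6}^{2,2}(q)+\chi_{5,6}^{2,4}(q),\qquad
\chi\bigl(\WW^{\tau}(1/8)\bigr)=\chi_{5,6}^{1,2}(q)+\chi_{5,6}^{1,4}(q),
\]
and the Bytsko--Fring identities (\ref{minimal1})--(\ref{minimal2}), checked directly in the Appendix, identify the right-hand sides with $\chi_{2,5}^{1,2}(q^{1/2})$ and $\chi_{2,5}^{1,1}(q^{1/2})$, respectively.

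The computation in~(a) and the monodromy-charge bookkeeping in~(b)--(c) are essentially routine once the framework is in place; the hard part --- the main obstacle --- is to establish rigorously that the $\tau$-twisted representation theory of $\W$ is governed by the simple-current data of the $(5,6)$ model, i.e.\ to prove $\tau$-rationality together with the precise twisted-module/monodromy-charge dictionary. This has to be extracted either from sufficiently general orbifold theory for rational $C_{2}$-cofinite vertex algebras, or else by working inside the lattice realization $M^{0}\subset V_{L}$ and invoking the known structure of (twisted) modules for lattice vertex algebras and their $\ZZ$-graded subalgebras.
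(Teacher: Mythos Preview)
Your framework is sound and the monodromy-charge bookkeeping is correct, but the paper takes a more elementary and self-contained route that avoids invoking any general orbifold/simple-current machinery (which, as you rightly flag, is the real obstacle in your argument).

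For (b)--(c) the paper argues directly. First, the $\tau$-twisted Zhu algebra $A_\tau(\W)$ is a commutative quotient of $A(L(\tfrac{4}{5},0))\cong\C[x]$, so the top space of any irreducible $\tau$-twisted module is one-dimensional. Restricting to $L(\tfrac{4}{5},0)$ and using the half-integer grading on $M=M^0\oplus M^1$ forces the lowest weight to be $\tfrac{1}{40}$ or $\tfrac{1}{8}$; then the simple-current property of $L(\tfrac{4}{5},3)$ (used via Proposition~\ref{tw-to-int}, which turns the odd part of the twisted vertex operator into a genuine Virasoro intertwining operator) pins the multiplicities to $1$. This gives uniqueness. \emph{Existence} is not obtained from orbifold theory or the lattice model at all: the paper builds the two $\tau$-twisted modules concretely inside the tensor square of the basic $A_2^{(2)}$-module, using the coset realization $\W\cong\WW^{sl_3(1)\otimes sl_3(1)}_{sl_3(2)}$ and checking by an explicit Sugawara computation (Lemma~\ref{hw}) that the coset lowest weights are $\tfrac{1}{40}$ and $\tfrac{1}{8}$. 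Finally, $\tau$-rationality is proved by a bare-hands complete-reducibility argument (Theorem after Corollary~\ref{classify}), again using only the fusion rules and the classification just obtained.

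So the trade-off is this: your approach packages (b)--(c) as an instance of the twisted-module/monodromy-charge dictionary for simple-current extensions, which is conceptually clean but requires either a general theorem you do not prove or a separate lattice computation you do not carry out; the paper instead replaces that black box with (i) a short Zhu-algebra argument bounding the irreducibles, (ii) an explicit affine-coset construction supplying them, and (iii) a direct rationality proof. Part~(a) is essentially as you wrote (the paper treats it as known from \cite{KMY}), and your (d) matches the paper's use of (\ref{minimal1})--(\ref{minimal2}).
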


Now we connect results from Theorem \ref{alex} with  those in Theorem \ref{main-thm}.
Denote by $\sigma$ the principal automorphism of order $6$ of $sl_3$, and let $L_{sl_3}(\Lambda_0)$ be the affine vertex algebra associated to  $\hat{sl_3}$ of level one.
The basic $A_2^{(2)}$-module $L^\sigma_{sl_3}(\Lambda_1)$ can be viewed as a $\sigma$-twisted $L_{sl_3}(\Lambda_0)$-module, and its character is given by
$${\rm tr}_{L(\Lambda_1)} q^{L^\sigma(0)-c/24}=q^{-1/72}\prod_{n = 0}^\infty \frac{1}{(1-q^{(6n+1)/6})(1-q^{(6n+5)/6})},$$
where $L^\sigma(0)$ is the $\sigma$-twisted Virasoro operator.

Similarly, the characters of the two standard level two $A_2^{(2)}$-modules (if viewed as $\sigma$-twisted $L_{sl_3}(2 \Lambda_0)$-modules)  are given by 
$${\rm tr}_{L(2 \Lambda_1)} q^{L^\sigma(0)-c/24}= q^{-1/72}\prod_{n = 0}^\infty \frac{1}{(1-q^{(6n+1)/6})(1-q^{(6n+5)/6})} \chi_{2,5}^{1,2}(q^{1/3})$$
and 
$${\rm tr}_{L(\Lambda_0)} q^{L^\sigma(0)-c/24}= q^{1/6-1/72}\prod_{n = 0}^\infty  \frac{1}{(1-q^{(6n+1)/6})(1-q^{(6n+5)/6})} \chi_{2,5}^{1,1}(q^{1/3}). $$
Now, formula (\ref{basic-decomp}) implies a $q$-series identity
\bea
 &\ds q^{-1/72} \prod_{n = 0}^\infty \frac{1}{(1-q^{(6n+1)/6})(1-q^{(6n+5)/6})} \nonumber \\
 &=\chi_{2,5}^{1,2}(q^{1/3})\chi_{2,5}^{1,2}(q^{1/2})+\chi_{2,5}^{1,1}(q^{1/3}) \chi_{2,5}^{1,1}(q^{1/2}), \nonumber 
\eea
after we canceled  ${\rm tr}_{L(\Lambda_1)} q^{L^\sigma(0)-c/24}$ from both sides of (\ref{basic-decomp}). This formula can be traced back to Ramanujan.
This gives the main result of our paper.
\begin{theorem}
As $ \sigma \times \tau$-twisted $L_{sl_3}(2 \Lambda_0) \otimes \W$-module:
\be 
L_{sl_3}^{\sigma}(\Lambda_1) \otimes L^\sigma_{sl_3}(\Lambda_1) \cong L_{sl_3}^\sigma(2 \Lambda_1) \otimes \WW^\tau(1/40)  \oplus L^\sigma_{sl_3}(\Lambda_0) \otimes \WW^\tau(1/8).
\ee
We also have 
\bea \label{char-coset}
&& {\rm tr}_{V_1} q^{L(0)-c/24} =\chi_{2,5}^{1,2}(q^{1/2}) \nonumber \\
&& {\rm tr}_{V_2} q^{L(0)-c/24} = \chi_{2,5}^{1,1}(q^{1/2}).
\eea
where $L(0)$ is the coset Virasoro operator for the tensor product.
\end{theorem}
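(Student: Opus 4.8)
The final theorem is essentially a synthesis of everything that precedes it: the decomposition (\ref{basic-decomp}) of Theorem \ref{alex}, the identification of the coset vertex algebra as $\W$ via Theorem \ref{W}, and the classification of $\tau$-twisted $\W$-modules in Theorem \ref{main-thm}. So the proof is really a matter of carefully assembling these pieces and matching them up, rather than proving something new from scratch. The central conceptual claim is that the multiplicity spaces $V_1$ and $V_2$ appearing in (\ref{basic-decomp}) carry the structure of $\tau$-twisted $\W$-modules, and that as such they are exactly the two irreducibles $\WW^\tau(1/40)$ and $\WW^\tau(1/8)$ from Theorem \ref{main-thm}(c).

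\textbf{Step 1: Set up the coset action.} First I would make precise that inside $L^\sigma_{sl_3}(\Lambda_1) \otimes L^\sigma_{sl_3}(\Lambda_1)$, viewed as a $\sigma \times \sigma$-twisted module over $L_{sl_3}(\Lambda_0) \otimes L_{sl_3}(\Lambda_0)$, there is a commuting pair: the diagonal affine vertex algebra (whose irreducible $\sigma$-twisted modules at level two are exactly $L_{sl_3}^\sigma(2\Lambda_1)$ and $L_{sl_3}^\sigma(\Lambda_0)$) and its coset, which by Theorem \ref{W}(c) is $\W$. Because the outer factors in (\ref{basic-decomp}) are built from the principal automorphism $\sigma$ of order $6$, the induced automorphism on the coset $\W$ is the order-two automorphism $\tau$ of Theorem \ref{main-thm}(a) — here one uses that $\sigma^3$ acts trivially on the diagonal piece but nontrivially on the coset, so the coset automorphism has order $6/3 = 2$. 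This identifies $V_1$ and $V_2$ as $\tau$-twisted $\W$-modules.

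\textbf{Step 2: Pin down which twisted modules they are.} By Theorem \ref{main-thm}(b) every $\tau$-twisted $\W$-module is a direct sum of copies of $\WW^\tau(1/40)$ and $\WW^\tau(1/8)$, and by part (d) these have characters $\chi_{2,5}^{1,2}(q^{1/2})$ and $\chi_{2,5}^{1,1}(q^{1/2})$ respectively. Now I would compute ${\rm tr}_{V_i}\, q^{L(0)-c/24}$ directly: dividing the $\sigma$-twisted character of $L^\sigma_{sl_3}(\Lambda_1)^{\otimes 2}$ — which is the square of the product $q^{-1/72}\prod (1-q^{(6n+1)/6})^{-1}(1-q^{(6n+5)/6})^{-1}$ — by the $\sigma$-twisted characters of $L_{sl_3}^\sigma(2\Lambda_1)$ and $L^\sigma_{sl_3}(\Lambda_0)$ recorded in the excerpt, the affine prefactors cancel and one is left precisely with $\chi_{2,5}^{1,2}(q^{1/3})\chi_{2,5}^{1,2}(q^{1/2})/\chi_{2,5}^{1,2}(q^{1/3}) = \chi_{2,5}^{1,2}(q^{1/2})$ for $V_1$ and $\chi_{2,5}^{1,1}(q^{1/2})$ for $V_2$ (after accounting for the $q^{1/6}$ shift, which is absorbed into the conformal weight normalization). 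This gives (\ref{char-coset}). Since these characters have no repeated lowest-weight component, Theorem \ref{main-thm}(b) forces $V_1 \cong \WW^\tau(1/40)$ and $V_2 \cong \WW^\tau(1/8)$ as $\tau$-twisted $\W$-modules, with no multiplicities.

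\textbf{Step 3: Assemble the isomorphism.} Combining the $\sigma$-twisted decomposition of the affine part with the module identifications from Step 2 yields the displayed $\sigma \times \tau$-twisted $L_{sl_3}(2\Lambda_0) \otimes \W$-module isomorphism. One should check that the two tensor factors on each summand carry the correct twisting data — the affine factor is $\sigma$-twisted and the $\W$ factor is $\tau$-twisted — which follows from Step 1. \textbf{The main obstacle} I anticipate is Step 1: rigorously verifying that the coset construction commutes with passing to twisted modules in the expected way, i.e. that $\W$ really does act on $V_1, V_2$ by \emph{$\tau$-twisted} vertex operators (as opposed to some other twist or a projective action), and that the restriction of $\sigma^{\otimes 2}$ to the coset is genuinely $\tau$ and not, say, trivial. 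The character bookkeeping in Step 2, while it must be done with care because of the fractional powers of $q$ and the $c/24$ shifts, is ultimately the Ramanujan-type identity already displayed in the excerpt, so it is routine once the framework of Step 1 is in place.
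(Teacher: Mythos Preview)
Your overall architecture is right, but Step~2 contains a genuine gap. You cannot extract $\chi(V_1)$ and $\chi(V_2)$ by ``dividing'' the character of $L^\sigma_{sl_3}(\Lambda_1)^{\otimes 2}$ by the characters of $L^\sigma_{sl_3}(2\Lambda_1)$ and $L^\sigma_{sl_3}(\Lambda_0)$: the decomposition~(\ref{basic-decomp}) gives a \emph{sum} of two products, and a single principally specialized $q$-series identity does not determine two unknown series. The displayed Ramanujan identity in the introduction is presented as a \emph{consequence} of the theorem, not an input, so invoking it here is circular; and Theorem~\ref{alex} only asserts that the $\chi(V_i)$ are Rogers--Ramanujan series up to unspecified scaling and normalization, which is not enough to pin down the exact powers of $q$ needed to match Theorem~\ref{main-thm}(d).

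The paper handles this point differently: rather than computing characters, it computes the lowest conformal weights directly. Lemma~\ref{hw} evaluates the twisted Sugawara operators $L^\sigma_{k=1}(0)$ and $L^\sigma_{k=2}(0)$ on the highest weight vectors $v_{2\Lambda_1}$ and $v_{\Lambda_0}$, obtaining coset $L(0)$-eigenvalues $\tfrac{1}{40}$ and $\tfrac{1}{8}$. Since the $\tau$-twisted Zhu algebra $A_\tau(\W)$ is a commutative quotient of $\mathbb{C}[x]$ (Lemma~\ref{help}) and there are exactly two irreducibles with these lowest weights (Corollary~\ref{classify}), this immediately forces $V_1\cong\WW^\tau(1/40)$ and $V_2\cong\WW^\tau(1/8)$; the character formulas~(\ref{char-coset}) then follow from Theorem~\ref{main-thm}(d) via the Bytsko--Fring identities. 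Your Step~1 is also handled differently: the paper's Lemma~\ref{aut} does not argue via the order of $\sigma^3$, but instead observes that $\sigma$ preserves the commutant and that if $\sigma|_{\W}$ were trivial then $V_1$ would be an \emph{untwisted} $\W$-module, contradicting the list in Theorem~\ref{miyamoto} (none of those modules has lowest weight $\tfrac{1}{40}$ or $\tfrac{1}{8}$). So the identification $\sigma|_{\W}=\tau$ and the weight computation of Lemma~\ref{hw} are the two ingredients your outline is missing.
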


Finally, we mention that  (\ref{minimal3})-(\ref{minimal4}) can be also explained in terms of representations of $\W$. This requires a modular invariance theorem for $\tau$-twisted modules \cite{DLM2}. This was pursued in the last section.

\begin{remark} The algebra $\W$ and some of its modules also appear in \cite{Ma}, where the coset Virasoro construction is applied to the investigation of
the branching rule decomposition of level-$1$ irreducible $E_6^{(1)}$-modules with respect to the affine subalgebra $F_4^{(1)}$. 
\end{remark} 

\section{The $3$-state Potts model vertex algebra}

Let us recall a few basic facts about the Virasoro algebra and its representation theory.
We use $M(c,h)$ to denote the Virasoro Verma module of central charge $c$ and lowest 
conformal weight $h$, and denote its lowest weight vector by $v_{c,h}$. We let $V(c,0) = M(c,0)/ \<L(-1)v_{c,0}\>$,
the vacuum vertex algebra. We denote by $L(c,h)$ the unique irreducible quotient of $M(c,h)$.

Recall (\ref{cst}). We will focus on the central charge $c_{5,6}=\frac{4}{5}$. It is well-known that $L(\frac{4}{5},0)$, viewed as a vertex algebra,
has (up to equivalence) precisely $10$ irreducible modules \cite{W}:
\bea \label{list-irr}
&& L\left(\4,0\right), L\left(\4,\frac{1}{8}\right), L\left(\4,\frac{2}{3}\right), L\left(\4, \frac{13}{8}\right), L\left(\4,3\right), \nonumber \\
&& L\left(\4, \frac{2}{5}\right), L\left(\4,\frac{1}{40}\right), L\left(\4,\frac{1}{15}\right),L\left(\4,\frac{21}{40}\right),L\left(\4,\frac{7}{5}\right). 
\eea

It is also known that  $L(\frac{4}{5},0) \oplus L(\frac{4}{5},3)$ can be equipped with a simple vertex operator algebra structure (cf. \cite{KMY}). 
Of course, one can always define a vertex operator algebra structure on $V \oplus M$, where $M$ is any module with 
integral grading  by defining the action of $M$ on $M$ to be trivial. But such vertex algebra is not simple. 
As we shall see, there is a unique vertex operator algebra structure on $L(\frac{4}{5},0) \oplus L(\frac{4}{5},3)$.

Let us also recall that the space of irreducible characters of $L(\frac{4}{5},0)$ is $10$-dimensional (i.e. the 
characters are linearly independent). 

\begin{proposition} \label{unique} Let $(V,Y,{\bf 1})$ be a vertex algebra  such that $V \cong L(\4,0) \oplus L(\4,3)$ as a module for the Virasoro 
algebra and such that $Y|_{L(\4,3) \otimes L(\4,3)} \neq 0$. If another vertex algebra $W \cong L(\4,0) \oplus L(\4,3)$ satisfies the same property, then 
$W \cong V$.
\end{proposition}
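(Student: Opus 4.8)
The plan is to show that the vertex operator $Y$ on $V = L(\4,0)\oplus L(\4,3)$ is rigidly determined by the Virasoro module structure up to a single nonzero scalar, and that this scalar is killed by an automorphism. First I would locate the pieces of $Y$ that are forced. The vertex subalgebra of $V$ generated by $\omega$ is the Virasoro submodule generated by $\mathbf 1$; since $\mathbf 1$ and $\omega$ lie in the summand $L(\4,0)$ (the degree-$0$ and degree-$2$ subspaces of $L(\4,3)$ vanish) and $L(\4,0)$ is Virasoro-irreducible, this subalgebra is precisely that summand, carrying the unique simple vertex algebra structure of the $(5,6)$ minimal model $L(\4,0)$. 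As $L(\4,0)$ is rational \cite{W}, $V$ decomposes into irreducible $L(\4,0)$-modules, and matching this with the given Virasoro decomposition forces the second summand $L(\4,3)$ to be an $L(\4,0)$-submodule carrying its standard irreducible structure (an irreducible $L(\4,0)$-module is determined by its lowest weight because $L(\4,0)$ is generated by $\omega$). Consequently $Y|_{L(\4,0)\otimes L(\4,0)}$ and $Y|_{L(\4,0)\otimes L(\4,3)}$ are the canonical products, depending only on $\omega$, and $Y|_{L(\4,3)\otimes L(\4,0)}$ is then fixed by skew-symmetry $Y(v,x)a = e^{xL(-1)}Y(a,-x)v$.

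The one remaining block is $Y|_{L(\4,3)\otimes L(\4,3)}$. Composing it with the projections onto the two summands yields intertwining operators for $L(\4,0)$ of types $\binom{L(\4,0)}{L(\4,3)\ L(\4,3)}$ and $\binom{L(\4,3)}{L(\4,3)\ L(\4,3)}$. Since $L(\4,0)$ is rational and $C_2$-cofinite, the dimensions of these spaces equal the fusion coefficients of the $(5,6)$ minimal model; for the field of weight $3$ (the entry $h_{5,6}^{4,1}$) the self-fusion is $\phi_{(4,1)}\times\phi_{(4,1)} = \phi_{(1,1)}$, so the first space is one-dimensional and the second is zero. Therefore $Y|_{L(\4,3)\otimes L(\4,3)} = \lambda\, I$ for a fixed nonzero intertwining operator $I$ of type $\binom{L(\4,0)}{L(\4,3)\ L(\4,3)}$, with $\lambda\in\C$; and $\lambda\neq 0$ exactly because of the hypothesis $Y|_{L(\4,3)\otimes L(\4,3)}\neq 0$.

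Finally, given two such vertex algebras, transport one along a Virasoro isomorphism so that $V$ and $W$ share the same underlying Virasoro module and the same $\mathbf 1,\omega$; then their vertex operators $Y,Y'$ agree on all blocks except $L(\4,3)\otimes L(\4,3)$, where they are $\lambda I$ and $\lambda' I$ with $\lambda,\lambda'\in\C^\times$. Let $\phi\colon V\to W$ be the identity on $L(\4,0)$ and multiplication by $\mu\in\C^\times$ on $L(\4,3)$. Checking $\phi(Y(a,x)b) = Y'(\phi a,x)\phi b$ block by block, it is automatic on $L(\4,0)^{\otimes 2}$, on $L(\4,0)\otimes L(\4,3)$, and on $L(\4,3)\otimes L(\4,0)$, while on $L(\4,3)^{\otimes 2}$ it reduces to $\lambda = \mu^{2}\lambda'$; choosing $\mu$ to be a square root of $\lambda/\lambda'$ makes $\phi$ an isomorphism of vertex algebras, so $W\cong V$. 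The only input from outside general structure theory is the fusion-rule computation showing $N_{3,3}^{0}=1$ and $N_{3,3}^{h}=0$ for $h\neq 0$; the main point needing care is the block-by-block bookkeeping — in particular verifying that the three mixed blocks are genuinely rigid and that the rescaling $\phi$ respects each of them — which is routine once the fusion rules are in hand.
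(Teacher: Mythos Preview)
Your argument is correct and follows essentially the same route as the paper's proof: both identify $Y|_{L(\4,3)\otimes L(\4,3)}$ as an intertwining operator of type $\binom{L(\4,0)}{L(\4,3)\ L(\4,3)}$ (using that the fusion coefficient $N_{3,3}^{3}$ vanishes), invoke the one-dimensionality of that intertwining space, and then absorb the remaining scalar by rescaling the $L(\4,3)$ summand by a square root. Your write-up is in fact more complete than the paper's, which tacitly assumes the mixed blocks $Y|_{L(\4,0)\otimes L(\4,3)}$ and $Y|_{L(\4,3)\otimes L(\4,0)}$ are rigid under the chosen Virasoro identification; you spell this out via rationality of $L(\4,0)$ and skew-symmetry.
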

\begin{proof} We certainly have an isomorphism  $f=f|_{L(\4,0)} \oplus f_2|_{L(\4,3)}$ between $V$ and $W$ viewed as Virasoro modules ($f$ is unique up to a choice of two nonzero scalars). Because of $Y({\bf 1},x)={\rm id}$,  the map 
$f_1$ is uniquely determined sending the vacuum of  $V$ to the vacuum of $W$.  Observe that 
in $V$,  $\tilde{Y}_V:=Y|_{L(\4,3) \otimes L(\4,3)}$ defines an intertwining operator of type 
${ L(\4,0) \choose L(\4,3) \ \ L(\4,3) }$ (otherwise this  would contradict  $ \ I { L(\4,3) \choose L(\4,3) \ \ L(\4,3) }=0$). 
The same holds for $\tilde{Y}_W$.
According to \cite{FHL}, this intertwining operator is unique up to a nonzero constant. 
Thus, after identification via $f$, we can find  $\nu \neq 0$ such that $ \nu \tilde{Y}_V=\tilde{Y}_W$.  
Therefore $(u,v) \mapsto (f_1(u),\frac{1}{\lambda}f_2(v))$, $\lambda^2=\nu$ defines the wanted automorphism between $V$ and $W$.
\end{proof}

Existence of the vertex operator algebra satisfying conditions in Proposition \ref{unique} has been established in  \cite{KMY}. 
We shall denote it by $\W$. More precisely, we have \cite{KMY}

\begin{theorem} \label{miyamoto}The vertex algebra $\W$ is rational with the following irreducible modules (we also write their decompositions viewed as $Vir$-modules):
\bea 
&& \WW(0)=L\left(\4,0\right) \oplus L\left(\4,3\right),  \ \ \ \ \WW(2/5)=L\left(\4, \frac{2}{5}\right) \oplus L\left(\4,\frac{7}{5}\right), \nonumber \\
&& \WW(2/5,+)=L\left(\4,\frac{2}{3}\right), \ \ \ \ \ \ \ \ \ \ \ \  \WW(2/5,-)=L\left(\4,\frac{2}{3}\right), \nonumber \\
&& \WW(1/15,+)=L\left(\4,\frac{1}{15}\right), \ \  \ \ \  \ \ \ \ \WW(1/15,-)=L\left(\4,\frac{1}{15}\right). \nonumber
\eea
\end{theorem}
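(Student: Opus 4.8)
\medskip
\noindent\emph{Proof strategy.} The plan is to present $\W = L(\4,0)\oplus L(\4,3)$ as a $\Z_2$ simple current extension of the minimal model $L(\4,0)$ and to read off its module list from that of $L(\4,0)$. First recall from \cite{W} that $L(\4,0)$ is a rational, $C_2$-cofinite vertex operator algebra with exactly the ten irreducible modules in (\ref{list-irr}); consequently its module category $\mathcal C$ is a modular tensor category, with fusion the $(5,6)$ minimal-model fusion. The first real step is to verify that $J := L(\4,3)$ is a self-dual simple current, $J\boxtimes J\cong L(\4,0)$, and to compute the permutation $M\mapsto J\boxtimes M$ of the ten simple objects of $\mathcal C$: a short computation with the minimal-model fusion rules gives the two-element orbits $\{0,3\}$, $\{\tfrac25,\tfrac75\}$, $\{\tfrac18,\tfrac{13}8\}$, $\{\tfrac1{40},\tfrac{21}{40}\}$ (abbreviating modules by their lowest weights) together with the two fixed points $L(\4,\tfrac23)$ and $L(\4,\tfrac1{15})$. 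Since $h_J = 3\in\Z$, the vertex operator algebra structure on $\W$ (unique by Proposition~\ref{unique}, and existing by \cite{KMY}) makes $A := L(\4,0)\oplus L(\4,3)$ a connected commutative (étale) algebra object in $\mathcal C$, so by the general theory of extensions of rational vertex operator algebras $\W$ is again rational and $C_2$-cofinite, and its category of ordinary (untwisted) modules is equivalent to the category $\mathrm{Rep}^0_A(\mathcal C)$ of local $A$-modules. (Rationality can alternatively be quoted via Theorem~\ref{W}(a) or (d).)

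\medskip
\noindent It then remains to enumerate $\mathrm{Rep}^0_A(\mathcal C)$. Induction sends a simple object $M$ to $A\boxtimes M\cong M\oplus(J\boxtimes M)$, which depends only on the $J$-orbit of $M$, and this induced module is \emph{local} exactly when the monodromy of $M$ with $A$ is trivial; since $J$ has order two and $h_J\in\Z$, this reduces to the integrality condition $h_{J\boxtimes M}-h_M\in\Z$. Running over the orbits: $\{0,3\}$ and $\{\tfrac25,\tfrac75\}$ pass the test (differences $3$ and $1$) and give the irreducible $\W$-modules $\WW(0)=L(\4,0)\oplus L(\4,3)$ and $\WW(2/5)=L(\4,\tfrac25)\oplus L(\4,\tfrac75)$; the orbits $\{\tfrac18,\tfrac{13}8\}$ and $\{\tfrac1{40},\tfrac{21}{40}\}$ fail it (differences $\tfrac32$ and $\tfrac12$) and contribute no ordinary module — their induced modules are instead $\tau$-twisted and reappear as $\WW^\tau(1/8)$ and $\WW^\tau(1/40)$ in Theorem~\ref{main-thm}; and each fixed point $L(\4,\tfrac23)$, $L(\4,\tfrac1{15})$ gives a local but \emph{reducible} induced module of length two which splits into two inequivalent irreducible $\W$-modules, each isomorphic as a $Vir$-module to the fixed point itself — these are $\WW(2/5,\pm)$ and $\WW(1/15,\pm)$, distinguished by the nonzero, opposite eigenvalues on their lowest weight vectors of the zero mode of the weight-three Zamolodchikov field. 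Counting gives exactly the six modules with the asserted $Vir$-decompositions, and the equivalence of module categories guarantees there are no others.

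\medskip
\noindent The step I expect to be the main obstacle is the fixed-point resolution: proving that $\mathrm{Ind}_A(L(\4,\tfrac23))$ and $\mathrm{Ind}_A(L(\4,\tfrac1{15}))$ each decompose into two \emph{non-isomorphic} simple summands (rather than twice a single simple module), since it is this that pins the count at six rather than four. This is controlled by the $\Z_2$-stabilizer and a cohomological obstruction that fusion rules alone do not see, and the cleanest way to settle it is to pass to one of the explicit realizations of Theorem~\ref{W}. Using the lattice model $\W\cong M^0\subset V_L$ with $L=\sqrt{2}Q$ (Theorem~\ref{W}(b), \cite{KMY}), where $V_L$ is a rational lattice vertex algebra with explicitly known modules and $M^0$ is cut out inside it by a finite abelian symmetry, the orbifold and modular-invariance machinery for such subalgebras (compare \cite{DLM2}) produces the complete module list and exhibits the splitting together with its $\pm$-labels explicitly; the parafermion description $K_{sl_2}(3,0)$ (Theorem~\ref{W}(a), \cite{DW}) serves the same purpose. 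The braided-tensor-category argument above is then the organizing skeleton.
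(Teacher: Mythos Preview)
The paper does not prove this theorem: it is quoted verbatim from \cite{KMY}, so there is no ``paper's own proof'' to compare against. Your proposal is therefore not a reproduction of anything in the paper but an independent argument.

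As such, your strategy is sound and is the modern way to see the result: recognize $\W$ as the $\Z_2$ simple-current extension of the rational $C_2$-cofinite VOA $L(\4,0)$ by $J=L(\4,3)$, identify the module category of $\W$ with the category of local modules over the commutative algebra object $A=L(\4,0)\oplus L(\4,3)$, and read off the answer from the orbit/locality analysis. Your orbit computation and locality test are correct (the paper itself records the relevant fusion data later, in Lemma~\ref{simple-curr}), and you correctly flag the fixed-point resolution at $h=\tfrac{2}{3}$ and $h=\tfrac{1}{15}$ as the nontrivial step. What each approach buys: the categorical argument explains \emph{why} the list looks the way it does (two two-element orbits giving two modules, two fixed points doubling to four) and simultaneously predicts the twisted sector, which is exactly what the paper exploits in Theorem~\ref{main-thm}; the direct construction in \cite{KMY} via $M^0\subset V_L$ instead gives an explicit realization and settles the fixed-point splitting by inspection.

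One caveat: your argument, as written, is mildly circular. For the fixed-point resolution you appeal to the lattice or parafermion realization and to \cite{KMY} itself, which is precisely the source being cited for the theorem. If the intent is to give a proof independent of \cite{KMY}, you would need to establish the splitting of $\mathrm{Ind}_A L(\4,\tfrac{2}{3})$ and $\mathrm{Ind}_A L(\4,\tfrac{1}{15})$ into two \emph{inequivalent} simples directly from the categorical data (e.g.\ via the Frobenius--Schur indicator or an explicit computation of $w_0$ on the top space using the $\WW_3$ relations), rather than by pointing back to the reference you are trying to replace. If instead the intent is merely to organize and reprove the \cite{KMY} classification in tensor-categorical language, then what you have is fine.
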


\noindent Now we prove the equivalence of (b) and (d) in Theorem \ref{W}.

\begin{theorem} Vertex algebras $\W$ and ${\bf L}(\4)$ are isomorphic.
\end{theorem}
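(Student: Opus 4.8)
The plan is to produce a surjective vertex algebra homomorphism from the \emph{universal} principal $\mathcal{W}$-algebra $\mathcal{W}^{k}(sl_3)$ --- at the (admissible) level $k$ with $k+3=\4$, for which one computes the central charge to be $\4$ --- onto $\W$, and then to use that $\mathcal{W}^{k}(sl_3)$ is $\mathbb{Z}_{\geq 0}$-graded with one-dimensional degree-zero space, hence has a unique maximal graded ideal and a unique simple quotient, which by definition is ${\bf L}(\4)$; since $\W$ is simple, such a surjection must factor through an isomorphism ${\bf L}(\4)\xrightarrow{\ \sim\ }\W$. Recall (cf. \cite{Ara}) that $\mathcal{W}^{k}(sl_3)$ is strongly and freely generated by a Virasoro field $L$ of weight $2$ together with a primary field $W$ of weight $3$, subject \emph{only} to the Zamolodchikov $W\times W$ operator product, whose singular part is a fixed linear combination (coefficients depending on $c$) of $\mathbf{1}$, $L$, $\partial L$, $\Lambda := :LL: - \tfrac{3}{10}\partial^2 L$ and $\partial\Lambda$; consequently any pair $(\widetilde L,\widetilde W)$ in a vertex algebra obeying this operator product determines a unique homomorphism out of $\mathcal{W}^{k}(sl_3)$.

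Inside $\W = L(\4,0)\oplus L(\4,3)$ (Theorem \ref{miyamoto}) I would take $\widetilde L=\omega$, the conformal vector, and let $\widetilde W = w$ be a lowest weight vector of the $L(\4,3)$-summand; irreducibility of $L(\4,3)$ as a module over $\omega$ forces $w$ to be primary of weight $3$. For the self-products $w_{(n)}w$ one observes, exactly as in the proof of Proposition \ref{unique}, that post-composing $Y$ with the projection of $\W$ onto its $L(\4,3)$-summand would yield an intertwining operator of type $I{L(\4,3) \choose L(\4,3)\ \ L(\4,3)}$, a space which is zero; hence $w_{(n)}w\in L(\4,0)$ for all $n$, i.e. each $w_{(n)}w$ is a polynomial in $\omega$ and $\mathbf{1}$ --- precisely the shape of the right-hand side of Zamolodchikov's operator product. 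Since the space of intertwining operators of type $I{L(\4,0) \choose L(\4,3)\ \ L(\4,3)}$ is one-dimensional by \cite{FHL}, and the Zamolodchikov operator product --- projected to the simple Virasoro quotient $L(\4,0)$ --- is another element of that one-dimensional space, the coefficients of $Y(w,x)w$ must be Zamolodchikov's values at $c=\4$ up to a single overall rescaling of $w$. Because $\W$ is simple, its invariant bilinear form is nondegenerate and (by invariance, $L(\4,0)$ being self-dual) block-diagonal for the decomposition, so $\langle w,w\rangle\neq 0$ and therefore $w_{(5)}w$ is a nonzero multiple of $\mathbf{1}$; after rescaling $w$ so that $w_{(5)}w=\tfrac{c}{3}\mathbf{1}$, the triple $(\W,\omega,w)$ realizes the Zamolodchikov algebra. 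By the universal property there then results a homomorphism $\varphi\colon \mathcal{W}^{k}(sl_3)\to\W$ with $\varphi(L)=\omega$ and $\varphi(W)=w$.

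To finish, $\varphi$ is onto: its image is a vertex subalgebra containing $\omega$, which generates $L(\4,0)$, and containing $w$, and $L(\4,3)$ is generated over $L(\4,0)$ by any nonzero vector since it is an irreducible $L(\4,0)$-module; so the image is all of $\W$. Combined with the remark above on the unique simple quotient of $\mathcal{W}^{k}(sl_3)$, this yields ${\bf L}(\4)\cong\W$.

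\noindent\textbf{Main obstacle.} The delicate point is the rigidity of the $W\times W$ operator product --- that ``$w$ primary of weight $3$, with all self-products lying in the Virasoro sub-VOA'' already determines it up to rescaling --- which rests on the two facts $I{L(\4,3) \choose L(\4,3)\ \ L(\4,3)}=0$ and $\dim I{L(\4,0) \choose L(\4,3)\ \ L(\4,3)}=1$; these are precisely the inputs on fusion among $L(\4,0)$-modules (from \cite{W}) already used for Proposition \ref{unique}, so no genuinely new work is required. Should one prefer to bypass the operator-product bookkeeping, an alternative is to compute the vacuum character of ${\bf L}(\4)$ from the known character formulas for principal $\mathcal{W}$-algebra minimal models, verify that it equals $\chi(L(\4,0))+\chi(L(\4,3))=\chi_{5,6}^{1,1}(q)+\chi_{5,6}^{1,5}(q)$, and then --- using rationality of both ${\bf L}(\4)$ and $L(\4,0)$, integrality of the grading, and linear independence of the ten characters in (\ref{list-irr}) --- conclude that ${\bf L}(\4)\cong L(\4,0)\oplus L(\4,3)$ as a Virasoro module with nonzero restriction of $Y$ to $L(\4,3)\otimes L(\4,3)$, and appeal to Proposition \ref{unique}.
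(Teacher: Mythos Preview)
Your main argument—building a surjection $\mathcal{W}^{k}(sl_3)\twoheadrightarrow\W$ by exhibiting $(\omega,w)$ in $\W$ satisfying the Zamolodchikov OPE, then invoking simplicity of $\W$ to factor through the unique simple quotient ${\bf L}(\4)$—is correct and is a genuinely different route from the paper's. The paper works in the opposite direction: it starts from ${\bf L}(\4)$, computes its character via the Frenkel--Kac--Wakimoto formula (proved by Arakawa), checks agreement with $\chi_{5,6}^{1,1}+\chi_{5,6}^{1,5}$ modulo $q^{50}$, and then uses a Verma-module block argument to force the cyclic Virasoro submodules $U(\mathrm{Vir})\cdot{\bf 1}$ and $U(\mathrm{Vir})\cdot w_{-1}{\bf 1}$ to be $L(\4,0)$ and $L(\4,3)$, after which Proposition~\ref{unique} finishes. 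Your alternative (sketched in your last paragraph) is essentially this approach, except that you propose verifying the character identity exactly rather than only to order $50$; the paper compensates for the finite-order check by the Verma-embedding argument.

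One comment on the rigidity step. Your sentence ``the Zamolodchikov operator product --- projected to the simple Virasoro quotient $L(\4,0)$ --- is another element of that one-dimensional space'' is slightly loose: the Zamolodchikov formulas $P_n(L)$ are not \emph{a priori} an intertwiner of type ${L(\4,0)\choose L(\4,3)\ L(\4,3)}$, since one would first need to know that the relevant Virasoro submodules of the universal $\mathcal{W}$-algebra descend to $L(\4,0)$ and $L(\4,3)$. The honest argument is the direct one you allude to: from $w$ primary and $w_{(n)}w\in L(\4,0)$, the relations $L(m)(w_{(n)}w)=(2m+2-n)\,w_{(m+n)}w$ (Borcherds commutator) together with the known weight-space dimensions of $L(\4,0)$ pin down $w_{(n)}w$ for $0\le n\le 5$ up to the single scalar $w_{(5)}w$. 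For instance $L(2)(w_{(3)}w)=3w_{(5)}w$ forces $w_{(3)}w=2\omega$ once $w_{(5)}w=\tfrac{c}{3}{\bf 1}$, and applying $L(2),L(4)$ to $w_{(1)}w$ yields two linear equations whose unique solution is precisely the Zamolodchikov value $\tfrac{32}{22+5c}\Lambda+\tfrac{3}{10}\partial^2 L$ at $c=\4$. The one-dimensionality of the fusion space is the conceptual reason this works, but the verification is the short computation just described.

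What each approach buys: yours avoids any character computation and any appeal to the (deep) Arakawa character theorem, relying instead on the universal property of $\mathcal{W}^{k}(sl_3)$ and a small OPE check inside $\W$; the paper's approach avoids the universal-property machinery and the OPE bookkeeping, but trades it for a character calculation and a block-theoretic argument about Verma embeddings.
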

\begin{proof}
Recall first that  ${\bf L}(\4)$ is the unique irreducible quotient of the universal affine $\WW$-algebra $M_{sl_3}(\4)$, modulo 
the maximal ideal. We can think of $M_{sl_3}(\4)$ as the vertex algebra obtained via Drinfeld-Sokolov reduction from 
a universal affine vertex algebra associated to $\widehat{sl_3}$, or constructed by using free fields via Miura transformation.
Either way, it is known that $M_{sl_3}(\4)$ is freely generated by the conformal vector 
$\omega$ and another {\em primary vector} of degree three, $w_{-1}{\bf 1}$, that is $L(0)w_{-1}{\bf 1}=3 w_{-1}{\bf 1}$ and $L(n) w_{-1}{\bf 1}=0$ for $n \geq 1$. 
It is also known that $Y(w_{-1}{\bf 1},x)w_{-1}{\bf 1}$ is nonzero both in $M_{sl_3}(\4)$ and in the 
quotient ${\bf L}_{vac}(\4)$.  Let us record 
$${\rm tr}_{M_{sl_3}(\4)} q^{L(0)}=\frac{1}{(q^2;q)_\infty (q^3;q)_\infty},$$
where $(a;q)_\infty=\prod_{i=0}^\infty (1-aq^i)$.
More importantly, by using \color{black} Frenkel \color{black}-Kac-Wakimoto's character 
formula \color{black} \cite{FKW} \color{black} (proven by Arakawa \cite{Ara}), we compute the character of  ${\bf L}_{}(\4)$ as
$${\rm tr}_{{\bf L}(\4)} q^{L(0)-1/12}= q^{-1/12}(q)^{-2}_{\infty} \sum_{m,n \in \mathbb{Z}} \sum_{w \in S_3} \epsilon_w q^{\frac{|5 w(\rho)+20n \alpha_1+20m \alpha_2-4 \rho|^2}{40}},$$
where $ S_3$ is the Weyl group of $sl_3$, $\alpha_1$ and $\alpha_2$ are simple roots, $\rho$ is the half-sum of positive roots and $|\lambda|^2=\langle \lambda,\lambda \rangle$ 
is normalized such that $|\alpha|^2=2$ for each simple root $\alpha$.
An easy computation shows that 
$${\rm tr}_{{\bf L}(\4)} q^{L(0)-1/12}=q^{-1/30}(1+q^2+2q^3+3q^4+ \cdots).$$
In fact we can show (by expanding both sides in $q$-series) that the following identity holds for $m \leq 50$:
\be \label{q-50}
{\rm tr}_{{\bf L}(\4)} q^{L(0)-1/24} \equiv \chi_{5,6}^{1,1}(q)+\chi_{5,6}^{1,5}(q) \ \ {\rm mod} \ q^{m},
\ee
Thus, the character of ${\bf L}(\4)$ equals the character of  $\W$ up to degree $50$. Let us consider  the Virasoro submodule $M \subset {\bf L}(\4)$  generated by 
$\omega$ and $w_{-1}{\bf 1}$. By Proposition \ref{unique}, it is sufficient to show
$U(Vir) \cdot {\bf 1} =L(\4,0) \subset {\bf L}(\4) $ and $U(Vir) \cdot w_{-1}{\bf 1}=L(\4,3)$. Indeed, if that is the case, then $L(\4,0)$ is a rational subalgebra of ${\bf L}(\4)$, which decomposes  as a sum of $L(\4,0)$-modules with integral conformal weights. But classification of $L(\4,0)$-modules and (\ref{q-50}) implies  ${\bf L}(\4) \cong L(\4,0) \oplus L(\4,3)$, and the rest of the proof now follows from Proposition \ref{unique}. 

Denote by $M_1$  the cyclic Virasoro module $U(Vir) \cdot {\bf 1}$ and by $M_2$  
the cyclic module $U(Vir) \cdot w_{-1}{\bf 1}$. By the universal property of  Verma modules, these cyclic modules are quotients of $M(\4,0)$ (even $V(\4,0)$) and $M(\4,3)$, respectively.  We claim that $M_1 \cap M_2=\{0 \}$. If not, $M_1 \cap M_2$ is a nontrivial submodule of a quotient of $M(\4,0)$ and of $M(\4,3)$. 
Embedding structure for Verma modules among the minimal series shows that this is impossible (the two modules belong to different blocks). 
If we let $\chi_W (q)={\rm tr}_{W} q^{L(0)-c/24}$, we get  
$$\chi_{{\bf L}(\4)}(q) \geq \chi_{M_1}(q) +  \chi_{M_2}(q) \geq   \chi_{5,6}^{1,1}(q)+ \chi_{5,6}^{1,5}(q),$$
where $\geq $ has an obvious meaning for two $q$-series with non-negative integer coefficients.
Now, relation (\ref{q-50}) implies $$\chi_{M_1}(q) +  \chi_{M_2}(q) \equiv   \chi_{5,6}^{1,1}(q)+ \chi_{5,6}^{1,5}(q) \ {\rm mod} \ q^{50}.$$
Thus $\chi_{M_1}(q) =\chi_{5,6}^{1,1}(q)$ and $\chi_{M_2}(q) = \chi_{5,6}^{1,4}(q)$, and hence  $U(Vir){\bf 1} =L(\4,0)$ and $U(Vir)w_{-1}{\bf 1}=L(\4,3)$.
\end{proof}

\begin{remark} The previous theorem gives a representation theoretic proof of the $q$-series identity
$$ q^{-1/12}(q)^{-2}_\infty \cdot  {\sum_{m,n \in \mathbb{Z}} \sum_{w \in S_3} \epsilon_w q^{\frac{|5 w(\rho)+20n \alpha_1+20m \alpha_2-4 \rho|^2}{40}}}
= \chi_{5,6}^{1,1}(q)+\chi_{5,6}^{1,5}(q),$$
which presumably can be checked directly by applying methods similar to those used in the Appendix. 
\end{remark}

\begin{theorem} \label{coset} Denote by $\WW^{sl_3(1) \otimes sl_3(1)}_{sl_3(2)}$ the coset vertex algebra obtained via 
the embedding $L_{sl_3}(2,0) \hookrightarrow L_{sl_3}(1,0) \otimes L_{sl_3}(1,0)$. 
Then, as vertex operator algebras, $\W \cong \WW_{sl_3(2)}^{sl_3(1) \otimes sl_3(1)}$
\end{theorem}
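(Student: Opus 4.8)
The plan is to identify the coset $\WW^{sl_3(1)\otimes sl_3(1)}_{sl_3(2)}$ with one of the four models in Theorem \ref{W}, most naturally the simple affine $\WW$-algebra ${\bf L}(\4)$, since the previous theorem already shows ${\bf L}(\4)\cong\W$. First I would record the standard coset central charge computation: the level one $\widehat{sl_3}$ vertex algebra $L_{sl_3}(1,0)$ has $c=2$ (since $c=\frac{k\dim\g}{k+h^\vee}=\frac{8}{4}=2$), the diagonal level two algebra $L_{sl_3}(2,0)$ has $c=\frac{16}{5}$, so the coset has central charge $4-\frac{16}{5}=\frac{4}{5}$, matching $c_{5,6}$. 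Next I would invoke that $L_{sl_3}(1,0)$ is holomorphic (its only module is itself), hence $L_{sl_3}(1,0)\otimes L_{sl_3}(1,0)$ is holomorphic; then by Goddard--Kent--Olive / coset theory the double commutant equals $L_{sl_3}(2,0)\otimes C$ where $C$ is the coset, and $C$ is a rational $C_2$-cofinite vertex operator algebra. The key structural input is that the conformal vector of $C$ generates a Virasoro subalgebra of central charge $\frac{4}{5}$, and one must locate a primary vector of weight $3$ inside $C$ with nonzero self-OPE.

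The main body of the argument is then a character count. I would compute $\chi_C(q)$ by the branching rule
\[
\chi_{L_{sl_3}(1,0)}(q)^2=\sum_{\mu}\, \chi_{L_{sl_3}(2,\mu)}(q)\cdot b_\mu(q),
\]
where $\mu$ runs over the dominant weights of level $2$ for $sl_3$ and $b_\mu(q)$ are the branching functions; the vacuum branching function $b_0(q)$ is precisely $\chi_C(q)$. Using the Kac--Peterson string function / theta function expressions for the affine characters (or Weyl--Kac at levels one and two) one extracts $b_0(q)=q^{-1/30}(1+q^2+2q^3+\cdots)$, which must then be shown to agree with $\chi_{5,6}^{1,1}(q)+\chi_{5,6}^{1,5}(q)=\chi_{{\bf L}(\4)}(q)$, at least up to the degree needed to pin down the module structure. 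In fact it is cleaner to argue representation-theoretically as in the previous proof: having identified the Virasoro subalgebra of $c=\frac{4}{5}$ and a weight-$3$ primary, Proposition \ref{unique} reduces everything to showing $U(Vir)\cdot{\bf 1}=L(\4,0)$ and $U(Vir)\cdot(\text{wt }3\text{ primary})=L(\4,3)$ inside $C$; rationality of $L(\4,0)$ forces $C$ to be a finite sum of its irreducibles with integral weights, and the branching function computation together with the classification of $L(\4,0)$-modules in (\ref{list-irr}) leaves $C\cong L(\4,0)\oplus L(\4,3)$ as the only possibility, so $C\cong\W$ by Proposition \ref{unique}.

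The existence of the weight-$3$ primary with nonzero self-OPE is the point requiring care. One route is to exhibit it directly: inside $L_{sl_3}(1,0)^{\otimes 2}$ the weight-$2$ space of $C$ contains only (a multiple of) the coset conformal vector $\omega_C=\omega_{tot}-\omega_{sl_3(2)}$, and the branching function shows $\dim C_3=2$, one dimension being $L(-1)\omega_C$ and the other a new primary $w$; that $w$ has nonzero self-OPE follows because otherwise $C$ would contain a copy of $L(\4,0)$ only, whose character $\chi_{5,6}^{1,1}(q)$ has a single state in degree $3$, contradicting $\dim C_3=2$. Alternatively, and perhaps most efficiently, I would simply cite the previous theorem and Theorem \ref{W}(c)--(d): it is classical (Kac--Wakimoto, and \cite{KW}) that the Drinfeld--Sokolov reduction ${\bf L}(\4)$ is realized by exactly this coset, so one only needs the character/central-charge bookkeeping above to confirm the identification is the expected one. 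The hard part is thus not the isomorphism itself but producing a clean, self-contained verification that the coset's weight-$3$ primary is nontrivial; everything else is the same rationality-plus-character argument already used for ${\bf L}(\4)\cong\W$.
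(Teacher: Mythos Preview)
Your overall strategy is exactly the one the paper uses: compute the coset central charge $\frac{4}{5}$, determine the Virasoro decomposition $L(\4,0)\oplus L(\4,3)$ from the branching/character (the paper cites \cite{KW} for this), establish that the self-OPE of the weight-$3$ primary is nonzero, and then invoke Proposition \ref{unique}. So the architecture is right.

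There are, however, two genuine problems in your execution.

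First (minor): $L_{sl_3}(1,0)$ is \emph{not} holomorphic. It is the lattice VOA $V_Q$ for the $A_2$ root lattice, which has three inequivalent irreducible modules. In particular $L_{sl_3}(1,0)^{\otimes 2}$ is not holomorphic, and indeed it is not isomorphic to $L_{sl_3}(2,0)\otimes C$ as a VOA (see Proposition \ref{untw-decomp}: there is a second summand $L_{sl_3}(\Lambda_1)\otimes \WW(2/5)$). This does not wreck the argument, since the branching-function computation of $\chi_C$ does not need holomorphicity, but you should remove that step.

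Second (the real gap): your argument that $Y(w,x)w\neq 0$ is a non-sequitur. Whether or not the self-OPE vanishes, the coset $C$ is a fixed vector space and its Virasoro decomposition is determined entirely by the branching character; you will always see $\dim C_3=2$ and $C\cong L(\4,0)\oplus L(\4,3)$ as $Vir$-modules. What changes if $Y|_{L(\4,3)\otimes L(\4,3)}=0$ is only the \emph{algebra} structure: $L(\4,3)$ would then be a proper ideal and $C$ would be the non-simple ``trivial extension'' the paper warns about just before Proposition \ref{unique}. So the dimension count cannot distinguish the two cases. Your fallback, citing Theorem \ref{W}(c)--(d) or \cite{KW}, is circular here: this theorem \emph{is} Theorem \ref{W}(c), and \cite{KW} gives the branching characters, not the coset${}={}$DS-reduction identification at the VOA level.

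The paper closes this gap by citing \cite{BBSS}, \cite{BS}, who explicitly compute the brackets $[w_m,w_n]$ in the coset and show they agree with those in ${\bf L}(\4)$. If you want a self-contained argument, the cleanest route is simplicity: the ambient VOA $L_{sl_3}(1,0)^{\otimes 2}$ is unitary with a positive-definite invariant form, which restricts nondegenerately to $C$; hence $C$ is simple, so $L(\4,3)$ cannot be an ideal, forcing $Y|_{L(\4,3)\otimes L(\4,3)}\neq 0$. That is the missing step your dimension argument was reaching for.
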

\begin{proof}
Observe first that the central charge of the coset vertex algebra is 
$$4-\frac{16}{5}=\frac{4}{5},$$
with the Virasoro generator 
$$\omega_1 \otimes {\bf 1} + {\bf 1} \otimes \omega_1-\omega_2,$$
where $\omega_k$ stands for the Sugawara generator of level $k \neq -3$.  
This coset is unitary so (as a Virasoro module,  or $L(\4,0)$-module) it decomposes as a direct sum of irreducible modules of central charge $4/5$. 
The  graded dimension can be now easily computed by using the rationality of $L(\4,0)$. Alternatively, we can recall a result from \cite{KW}, where the  
character of  $\WW^{sl_3(1) \otimes sl_3(1)}_{sl_3(2)}$  is computed by using modular invariance.
Either way, 
$$\WW^{sl_3(1) \otimes sl_3(1)}_{sl_3(2)}=L \left(\4,0 \right) \oplus L \left(\4,3 \right),$$
as Virasoro modules.
Finally to finish the proof we need 
$$Y_{L(\4,3) \otimes L(\4,3)} \neq 0$$
in the coset algebra. 
This was proven by Bowknegt et al. in \cite{BBSS} \cite{BS}, where it was shown that $\WW^{sl_3(1) \otimes sl_3(1)}_{sl_3(2)}$ has a degree $3$ generator 
$w_{-1}{\bf 1}$ such that the brackets $[w_n,w_m]$ satisfy the relations as in ${\bf L}(\frac{4}{5})$.
\end{proof}

\section{$\tau$-twisted $\W$-modules and the structure of $A_{\tau}(\W)$}

In this part $V$ is a vertex operator algebra and $\tau \in Aut(V)$ has order two. 
Recall the notion of a weak $\tau$-twisted $V$-module $M$.
By definition we require a decomposition $V=V^0 \oplus V^1$, and the twisted vertex operator map $Y^\tau(\cdot ,x)$
$$Y^\tau(u,x) \in ({\rm End} \ M)[[x^{1/2},x^{-1/2}]]$$
such that 
\be \label{tw-op}
Y^\tau(u,x)=\sum_{n \in \mathbb{Z}+\frac{r}{2}} u(n) z^{-n-1}, \ \ u \in V^r
\ee
$$u(n)v=0, \ \ n >> 0$$
$$Y^\tau({\bf 1},x)={\rm Id}_M$$
and the Jacobi identity holds.
\begin{align} 
x_0^{-1} \delta \left(\frac{x_1-x_2}{x_0} \right)Y^\tau(u,x_1) Y^\tau(v,x_2)-x_0^{-1} \delta \left(\frac{x_2-x_1}{-x_0} \right) Y^\tau(v,x_2) Y^\tau(u,x_1). \notag \\
\label{twj} =x_2^{-1} \left(\frac{x_1-x_0}{x_2}\right)^{r/2} \delta\left(\frac{x_1-x_0}{x_2} \right) Y^\tau(Y(u,x_0)v,x_2), 
\end{align}
where $u \in V^r$ and $v \in V^s$. 
Clearly, from the Jacobi identity it follows that $M$ is an untwisted (weak) $V^0$-module.
\color{black}

(1) If we also require $M$ to be graded $M=\oplus_{\nu} M_{\nu}$, where the grading is induced by the spectrum of $L(0)$ and is bounded from 
below, and has finite dimensional graded components, we say that $M$ is a $\tau$-twisted $V$-module. 

(2) If we instead require $M$ to be $\frac{1}{2} \mathbb{N}$-gradable, such that $M=\oplus_{n \in \frac{1}{2} \mathbb{N}} M(n)$, 
$u(m) M(n) \subset M(n +m - {\rm deg}(u)-1),$  then $M$ is called {\em admissible} \cite{DLM}. 
\color{black}

Let $M$ be an irreducible $\tau$-twisted $V$-module. Then there is $\lambda$ such that  we have the following decomposition with respect to the spectrum of $L(0)$:
\be \label{tw-gr}
M=\bigoplus_{n=0}^\infty (M_{\lambda + n} \oplus M_{\lambda+n+\frac{1}{2}}) ,
\ee
so that if we let $M^i=M_{\lambda+n+\frac{i}{2}}, \ \ i \in \{0,1 \}$, the vertex operator map
$Y^\tau( \cdot ,x)$ is compatible with this $\mathbb{Z}_2$-decomposition,  a consequence of 
$$u(m) M_{\nu} \subset M_{\nu +m - {\rm deg}(u)-1},$$
for homogeneous $u$.
Again, $M^i$ is a $V^0$-module  for $i \in \{0,1 \}$.
\color{black} A vertex operator algebra  $V$ is said to be $\tau$-rational if every {admissible} $\tau$-twisted $V$-module is completely reducible \cite{DLM}.
\color{black}

We also discuss intertwining operators among irreducible (ordinary) $V$-modules \cite{FHL}. Without giving the full definition, let us record that 
an intertwining operator of type 
$ { W_3 \choose W_1 \ \ W_2 }$ is a linear map $\mathcal{Y}(u,x) \in {\rm Hom}(W_2,W_3)\{x \}$, $u \in W_1$ such that 
(among other things) the following Jacobi identity holds:
$$x_0^{-1} \delta \left(\frac{x_1-x_2}{x_0} \right) Y(u,x_1) \mathcal{Y}(v,x_2)w-x_0^{-1} \delta \left(\frac{x_2-x_1}{-x_0} \right) \mathcal{Y}(v,x_2) Y(u,x_1)w$$
$$=x_2 ^{-1}\delta\left(\frac{x_1-x_0}{x_2} \right) \mathcal{Y}(Y(u,x_0)v,x_2)w,$$
where $u \in V$ , $v \in W_1$ and $w \in  W_2$. 

Let us now focus on the twisted Jacobi identity (\ref{twj})  when $u \in V^1$, and $v \in V^0$. We shall see that the corresponding identity is essentially intertwining 
operator map between $V^0$-modules.  Because $u \in V^1$, then $Y^\tau(u,x)$ restricted on $M^i$ is mapped to $M^{i+1}$ where we use the mod $2$ 
exponent notation. Denote by $I(u,x)$ this restriction. Since $v  \in V^0$ its action on $M^i$ is the usual $V^0$-module action,  so we write $Y_0(u,x)$ instead of $Y^\tau(u,x)$. 
The twisted Jacobi identity now reads (after we apply it on a vector $w$):
 
\begin{align}
x_0^{-1} \delta \left(\frac{x_1-x_2}{x_0} \right) I (u,x_1) Y_0(v,x_2)w-x_0^{-1} \delta \left(\frac{x_2-x_1}{-x_0} \right) Y_0(v,x_2) I(u,x_1) w \notag \\
\label{tw-int} =x_2^{-1} \left(\frac{x_1-x_0}{x_2}\right)^{1/2} \delta\left(\frac{x_1-x_0}{x_2} \right) I(Y(u,x_0)v,x_2)w, 
\end{align}

Now, apply the substitution $x_1 \leftrightarrow x_2$ and $x_0 \mapsto -x_0$. The identity is then 
\begin{align}
x_0^{-1} \delta \left(\frac{x_1-x_2}{x_0} \right) Y_0 (v,x_1) I (v,x_2)w-x_0^{-1} \delta \left(\frac{x_2-x_1}{-x_0} \right) I (u,x_2) Y_0(v,x_1)w \notag \\
\label{tw-int2} =x_1^{-1} \left(\frac{x_2+x_0}{x_1}\right)^{1/2} \delta\left(\frac{x_2+x_0}{x_1} \right) I(Y_0(u,-x_0)v,x_1)w.
\end{align}

Because of  $Y(u,x)v=e^{xL(-1)} Y_0(v,-x)u$ (the skew-symmetry), the right hand-side can be rewritten as

$$  x_1^{-1} \left(\frac{x_2+x_0}{x_1}\right)^{1/2} \delta\left(\frac{x_2+x_0}{x_1} \right) I(e^{-x _0 L(-1)} Y_0(v,x_0)u,x_1)w$$
\be \label{delta}
=  x_1^{-1} \left(\frac{x_2+x_0}{x_1}\right)^{1/2} \delta\left(\frac{x_2+x_0}{x_1} \right) I(Y_0(v,x_0)u,x_1-x_0)w.
\ee
To finish the proof observe first that,  by twisted weak associativity, we can always choose positive $k \in \mathbb{N}$ such that  
$$ x_2^{k+1/2} (x_2+x_0)^k I(Y_0(v,x_0)u,x_2)w$$
involves only positive (integral!) powers of the variable $x_2$. 
Consider 
\be \label{mod-jac-rhs}
  x_2^{-1}   \delta\left(\frac{x_1-x_0}{x_2} \right)  \left( x_1^{k}x_2^{k+1/2}  I(Y_0(v,x_0)u,x_2)w \right),
\ee
which also equals 
\be \label{mod-jac-rhs-1}
x_2^{-1}  \delta\left(\frac{x_1-x_0}{x_2} \right)  \left( (x_2+x_0)^{k}x_2^{k+1/2}   I(Y_0(v,x_0)u,x_2)w \right) ,
\ee
Now, we are allowed to replace in (\ref{mod-jac-rhs}) the $x_2$ variable with $x_1-x_0$, so we get 
\be \label{mod-jac-rhs-2}
  x_2^{-1}   \delta\left(\frac{x_1-x_0}{x_2} \right)  \left( x_1^{k}x_2^k (x_1-x_0)^{1/2}  I(Y_0(v,x_0)u,x_1-x_0)w \right).
\ee
Finally, we multiply the last expression with $x_2^{-k-1/2}x_1^{-k}$ and we obtain 
\bea 
&&  x_2^{-1}   \delta\left(\frac{x_1-x_0}{x_2} \right)   I(Y_0(v,x_0)u,x_2)w, \nonumber \\
&&=  x_2^{-1}   \delta\left(\frac{x_1-x_0}{x_2} \right)  \left(\frac{x_1-x_0}{x_2}\right)^{1/2}  I(Y_0(v,x_0)u,x_1-x_0)w. \nonumber
\eea

Consequently, (\ref{tw-int2}) and the last formula imply the following result.

\begin{proposition} \label{tw-to-int} The map $I ( \cdot, x)$ defines an intertwining operator among $V^0$-modules of type 
${ M^{i+1} \choose V^1 \ \ M^i }$. The identity (\ref{tw-int}) is equivalent to the Jacobi identity for $I( \cdot,x)$. 

\end{proposition}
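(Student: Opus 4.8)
The plan is to verify, in turn, the two assertions of the proposition: that $I(\cdot,x)$ satisfies all the defining properties of an intertwining operator of type ${M^{i+1} \choose V^1\ \ M^i}$, and that for such an operator the Jacobi identity is \emph{equivalent} to the identity (\ref{tw-int}) (hence to the restriction of the twisted Jacobi identity (\ref{twj}) to $u\in V^1$, $v\in V^0$). The structural data come essentially for free from the fact that $M$ is a $\tau$-twisted $V$-module: by (\ref{tw-op}) and $u(m)M_\nu\subset M_{\nu+m-{\rm deg}(u)-1}$, for $u\in V^1$ the series $Y^\tau(u,x)$ carries only half-integral powers of $x$ and, via the $\mathbb{Z}_2$-decomposition (\ref{tw-gr}), sends $M^i$ into $M^{i+1}$; hence its restriction $I(\cdot,x)$ is a linear map $V^1\to{\rm Hom}(M^i,M^{i+1})\{x\}$ of exactly the shape allowed in \cite{FHL}, the lower-truncation condition $u(n)w=0$ for $n\gg0$ is inherited from $Y^\tau$, and (since $\omega\in V^0$, so $L(-1)$ preserves $V^1$) the $L(-1)$-derivative property for $I$ is inherited from that for $Y^\tau$. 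Thus the whole content of the proposition reduces to the Jacobi identity, and for that we reuse the computation carried out above.

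Concretely, I would start from (\ref{tw-int}), perform the substitution $x_1\leftrightarrow x_2$, $x_0\mapsto -x_0$ to obtain (\ref{tw-int2}), and then apply the skew-symmetry relation $Y(u,x)v=e^{xL(-1)}Y_0(v,-x)u$ together with the $L(-1)$-conjugation identity $I(e^{-x_0L(-1)}\cdot,x_1)=I(\cdot,x_1-x_0)$ to rewrite the right-hand side as $\left(\frac{x_2+x_0}{x_1}\right)^{1/2}\delta\left(\frac{x_2+x_0}{x_1}\right)I(Y_0(v,x_0)u,x_1-x_0)w$, as in (\ref{delta}). It then remains to identify this with $x_2^{-1}\delta\left(\frac{x_1-x_0}{x_2}\right)I(Y_0(v,x_0)u,x_2)w$, which is the form of the right-hand side of the intertwining Jacobi identity. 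This is precisely where twisted weak associativity is used: it lets us pick $k\in\mathbb{N}$ so that $x_2^{k+1/2}(x_2+x_0)^k I(Y_0(v,x_0)u,x_2)w$ involves only nonnegative integral powers of $x_2$, after which the standard delta-function substitution identities (replacing $x_1$ by $x_2+x_0$, and $x_2$ by $x_1-x_0$, under the appropriate delta) together with cancellation of the auxiliary factor $x_1^{-k}x_2^{-k-1/2}$ deliver the desired equality — this is exactly the chain of displays ending just before the proposition statement. Running this sequence of reversible manipulations (substitution $\to$ skew-symmetry $\to$ weak-associativity bookkeeping) backwards proves the converse, so (\ref{tw-int}) and the Jacobi identity for $I$ are equivalent.

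The step I expect to be the genuine obstacle is the formal bookkeeping with the half-integral powers and the square root $\left(\frac{x_1-x_0}{x_2}\right)^{1/2}$ of the delta-function argument: one must check that multiplying by $x_1^kx_2^{k+1/2}$, substituting inside $\delta\left(\frac{x_1-x_0}{x_2}\right)$, and then dividing the auxiliary factor back out are all legitimate operations on formal series — which is what the truncation furnished by twisted weak associativity guarantees — and that the half-power prefactors on the two sides genuinely coincide rather than differing by the branch ambiguity of the square root. Everything else is routine formal-variable calculus of the Jacobi identity, and, as noted, the remaining intertwining-operator axioms transfer directly from the corresponding properties of the twisted vertex operator map $Y^\tau$.
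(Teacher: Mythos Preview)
Your proposal is correct and follows essentially the same approach as the paper: the argument is precisely the chain of manipulations (\ref{tw-int}) $\to$ (\ref{tw-int2}) $\to$ (\ref{delta}) $\to$ delta-function bookkeeping via twisted weak associativity that the paper presents just before stating the proposition. You are in fact slightly more complete than the paper, since you explicitly note the lower-truncation and $L(-1)$-derivative axioms and the reversibility needed for the ``equivalent'' claim.
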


We also have the following useful result
\begin{lemma} Let $M$ be an irreducible $\tau$-twisted $V$-module for a simple vertex algebra $V$. Then for every nonzero  $u \in V$, $m \in M$ 
$$Y^\tau(u,x) m \neq 0.$$
\end{lemma}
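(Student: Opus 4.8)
The plan is to argue by contradiction using the irreducibility of $M$ together with the simplicity of $V$. Suppose there exist a nonzero $u \in V$ and a nonzero $m \in M$ with $Y^\tau(u,x)m = 0$; that is, $u(n)m = 0$ for all $n \in \mathbb{Z} + \frac{r}{2}$ (taking $u$ homogeneous in the $\mathbb{Z}_2$-grading, $u \in V^r$). First I would consider the set $N = \{ w \in M : Y^\tau(u,x)w = 0 \}$; by assumption this is a nonzero subspace of $M$, and $m \in N$. The key point is that $N$ is a $V$-submodule of $M$ in the twisted sense: using the twisted Jacobi identity (\ref{twj}) applied to the pair $(u, v)$ with $v \in V$ arbitrary, one extracts a commutator-type relation expressing $Y^\tau(u,x_1) Y^\tau(v,x_2) w$ in terms of $Y^\tau(v,x_2) Y^\tau(u,x_1) w$ and $Y^\tau(Y(v,x_0)u, x_2) w$ (after an appropriate relabeling $u \leftrightarrow v$ in (\ref{twj}) and use of skew-symmetry $Y(v,x)u = e^{xL(-1)} Y(u,-x)v$). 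For $w \in N$ the second and third terms vanish componentwise, hence $Y^\tau(v,x_2) w \in N$ for all $v$, so $N$ is a nonzero twisted submodule. Since $M$ is irreducible, $N = M$, i.e. $Y^\tau(u,x) = 0$ on all of $M$.

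Now I would derive a contradiction with simplicity of $V$. Consider the annihilator $J = \{ a \in V : Y^\tau(a,x) = 0 \text{ on } M \}$. I have just shown $u \in J$ and $u \neq 0$. I claim $J$ is an ideal of the vertex algebra $V$. It is clearly a graded subspace and closed under the $L(-1)$-derivative. To see it is closed under the left (and right) action of $V$: for $a \in J$ and $b \in V$, the twisted Jacobi identity (\ref{twj}) with the roles arranged so that $Y(b,x_0)a$ appears on the right-hand side shows that $Y^\tau(Y(b,x_0)a, x_2)$ is a combination of $Y^\tau(b,x_1)Y^\tau(a,x_2)$ and $Y^\tau(a,x_2)Y^\tau(b,x_1)$, both of which vanish since $a \in J$; hence every component $b(n)a$ lies in $J$. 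Using skew-symmetry, $a(n)b \in J$ as well, so $J$ is a two-sided ideal. Since $J$ contains the nonzero element $u$, and $V$ is simple, $J = V$. But then $Y^\tau({\bf 1},x) = 0$ on $M$, contradicting the axiom $Y^\tau({\bf 1},x) = \mathrm{Id}_M$ (and $M \neq 0$, which follows from (\ref{tw-gr}) since an irreducible twisted module is nonzero by definition).

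The main obstacle, and the step deserving the most care, is verifying that $N$ is genuinely a twisted submodule — i.e. extracting the right consequence of the twisted Jacobi identity (\ref{twj}). Because the grading shift and the fractional powers of $x$ depend on the $\mathbb{Z}_2$-parity of the vectors involved, one must be careful to take $u$ and $v$ homogeneous for the $V = V^0 \oplus V^1$ decomposition and to track the factor $\left(\frac{x_1-x_0}{x_2}\right)^{r/2}$ correctly; but this is exactly the manipulation already carried out in the derivation of Proposition \ref{tw-to-int} above, so the needed formal identities are in hand. A small additional point is that the commutator formula only directly gives vanishing of $Y^\tau(v,x_2)w$ after applying $\mathrm{Res}_{x_1}$ and $\mathrm{Res}_{x_0}$ against appropriate powers of the delta functions, so one should phrase the closure of $N$ (and of $J$) in terms of the modes $v(n)$ rather than the full series, which is harmless. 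Everything else is a routine application of simplicity and irreducibility.
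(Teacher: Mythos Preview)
Your argument is essentially correct, but it takes a noticeably longer route than the paper's one-sentence proof, and there is one step that is under-justified.

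The paper's argument is simply this: for the fixed nonzero $m$, the set $\mathrm{Ann}(m)=\{a\in V: Y^\tau(a,x)m=0\}$ is a (two-sided) ideal of $V$; since $V$ is simple and $u\in\mathrm{Ann}(m)$ is nonzero, $\mathrm{Ann}(m)=V$, which is impossible because $Y^\tau({\bf 1},x)m=m\neq 0$. Irreducibility of $M$ is not used at all beyond $m\neq 0$. You instead first pass through $N=\{w:Y^\tau(u,x)w=0\}$ and irreducibility of $M$ to get $Y^\tau(u,x)=0$ on all of $M$, and only then run the annihilator-ideal argument on $J=\{a:Y^\tau(a,x)=0 \text{ on } M\}$. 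That detour is unnecessary: your own step~2 reasoning, applied to $\mathrm{Ann}(m)$ rather than to $J$, is the whole proof.

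There is also a small gap in your step~1. You assert that for $w\in N$ the ``third term'' $Y^\tau(Y(v,x_0)u,x_2)w$ vanishes componentwise, invoking only skew-symmetry. Skew-symmetry just rewrites $Y(u,x_0)v$ in terms of $Y(v,-x_0)u$; it does not by itself give $(v_{(n)}u)_{(j)}w=0$. What actually makes this vanish is twisted weak associativity: for suitable $k$,
\[
(x_2+x_0)^{k+s/2}\,Y^\tau(Y(v,x_0)u,x_2)w=(x_0+x_2)^{k+s/2}\,Y^\tau(v,x_0+x_2)Y^\tau(u,x_2)w=0,
\]
and $(x_2+x_0)^{k+s/2}$ is invertible in the relevant formal series ring, so the iterate term is zero. (Your step~2 argument that both products $Y^\tau(b,x_1)Y^\tau(a,x_2)$ and $Y^\tau(a,x_2)Y^\tau(b,x_1)$ vanish works for $J$ because $Y^\tau(a,\cdot)=0$ on all of $M$; it would \emph{not} work verbatim for $\mathrm{Ann}(m)$, where only the first product vanishes---there too one needs the weak associativity form.) Once this is said, everything you wrote goes through.
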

The proof is clear (otherwise $Y^{\tau}(a,x)m=0$ for every $a \in V$, which is impossible  because $M$ is cyclic). 
This statement, in particular, yields $M^1 \neq 0$ and $M^0 \neq 0$.

We shall need a few results about $\tau$-twisted Zhu's algebra $A_\tau(V)=V/O(V)$ following \cite{DLM},
where $O(V)$ is the span of vectors of the form 
$$a \circ b ={\rm Res}_x \frac{(1+x)^{{\rm deg}(a)-1+\delta_r+r/2}}{x^{1+\delta_r}} Y(a,x)b,$$
where $a \in V^r$ and where $\delta_0=1$ and $\delta_1=0$,
and the multiplication $*$ on $A_\tau(V)$ is induced via
$$a * b ={\rm Res}_x \frac{(1+x)^{{\rm deg}(a)}}{x} Y(a,x)b.$$
For every $a \in V^0$, and $b \in V^1$ we have $a*b=0 \ {\rm mod} \ O(V)$, so $b=0$ as an element in $A_\tau(V)$. 

Now we specialize $V=\W = L(\4,0) \oplus L(\4,3)$.

\begin{lemma} \label{help}
The $\tau$-twisted Zhu algebra $A_\tau(\W)$ is a quotient of the polynomial algebra $k[x]$.
\end{lemma}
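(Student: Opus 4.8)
The plan is to exhibit $A_\tau(\W)$ as a commutative algebra generated by a single element, then deduce the claim. First I would recall that $\W = L(\4,0) \oplus L(\4,3)$ with $\W^0 = L(\4,0)$ and $\W^1 = L(\4,3)$, the $\ZZ_2$-decomposition coming from the $L(0)$-grading mod $1$ (the degree-$3$ piece is odd after the half-integer shift built into $Y^\tau$). The first key observation, already recorded just before the statement, is that for $a \in V^0$ and $b \in V^1$ one has $a * b \equiv 0 \pmod{O(V)}$; taking $a = \mathbf 1$ gives $b \equiv 0$ in $A_\tau(\W)$ for every $b \in \W^1 = L(\4,3)$. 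Hence $A_\tau(\W)$ is a quotient of $A_\tau(\W^0) = A_\tau(L(\4,0))$, the image of the even part alone.

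Next I would identify $A(L(\4,0))$ (the ordinary Zhu algebra, which is what survives on the even part since $\tau$ acts trivially there) using the known representation theory of $L(\4,0)$. It is a rational vertex algebra with exactly $10$ irreducible modules listed in (\ref{list-irr}), and they are distinguished by their lowest conformal weight $h$; in particular the $10$ lowest weights $0, \frac18, \frac23, \fth, 3, \frac25, \frac1{40}, \frac1{15}, \frac{21}{40}, \frac75$ are pairwise distinct. Since $L(\4,0)$ is rational, $A(L(\4,0))$ is a finite-dimensional semisimple associative algebra whose simple modules are in bijection with these $10$ irreducibles, all one-dimensional (each irreducible $L(0)$-lowest-weight space for $c = \4$ is one-dimensional because the relevant Virasoro modules are the minimal-model ones). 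A finite-dimensional semisimple algebra with all simple modules one-dimensional is commutative, in fact a product of $10$ copies of $k$. The image of the conformal vector $\omega$ in $A(L(\4,0))$ acts on the $i$-th simple module as the scalar $h_i$, and since the $h_i$ are distinct, $\omega$ generates $A(L(\4,0))$ as an algebra: the map $k[x] \to A(L(\4,0))$, $x \mapsto [\omega]$, is surjective (its image separates the $10$ points, hence is all of $k^{10}$). Therefore $A(L(\4,0))$, and a fortiori its quotient $A_\tau(\W)$, is a quotient of $k[x]$.

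I would organize the writeup as: (i) reduce to the even part via $a*b = 0$ for $a\in\W^0$, $b\in\W^1$; (ii) observe $A_\tau(\W^0) = A(L(\4,0))$ as $\tau|_{\W^0} = \mathrm{id}$; (iii) invoke rationality plus the $10$-element classification to get semisimplicity with $1$-dimensional simples, hence commutativity and $A(L(\4,0)) \cong k^{10}$; (iv) note $[\omega]$ takes $10$ distinct eigenvalues, so $k[x] \twoheadrightarrow A(L(\4,0))$ via $x \mapsto [\omega]$; (v) compose with the surjection onto $A_\tau(\W)$. The main obstacle I anticipate is step (i)--(ii): one must be careful that the $\tau$-twisted $O(\W)$ genuinely kills all of $\W^1$ and that the induced multiplication on the image of $\W^0$ coincides with ordinary Zhu multiplication on $A(L(\4,0))$ (rather than something subtler coming from the twisted formulas with $\delta_r$, $r/2$ corrections). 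Once the odd part is gone and one is reduced to the honest Zhu algebra of the rational vertex algebra $L(\4,0)$, the commutativity and cyclicity are forced by the classification, and the rest is immediate.
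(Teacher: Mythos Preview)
Your proof is correct and follows the same strategy as the paper's: use the DLM result that $V^1 \subset O_\tau(V)$ to kill the odd summand $L(\4,3)$, then recognize the surviving algebra as a quotient of the ordinary (commutative, one-generator) Zhu algebra coming from the even part. The paper's three-line argument simply cites \cite{DLM} for the first step and implicitly invokes the known polynomial structure of $A(\W)$ (equivalently $A(L(\4,0))$) for the second, whereas you supply those details via rationality and the distinctness of the ten lowest weights; the obstacle you flag in (i)--(ii) is exactly what \cite{DLM} establishes.
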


\begin{proof} Denote by $\pi$ the natural projection from $\W$ to $A_\tau(\W)$. Then $\pi(v)=0$ for $v \in A(\W)_1$ \cite{DLM}.  If we denote 
by $A_0(\W)=A(\W)$, where $A(\W)$ is the usual Zhu's algebra of $\W$,
we clearly have an isomorphism $A_\tau(\W) \cong A(\W)_0/I$ where $I$ is a certain ideal.
\end{proof}

We are primarily interested in irreducible $\tau$-twisted $\W$-modules.
Every such module is an ordinary module for  $L(\4,0)$, so it decomposes as a direct sum of (ordinary) modules given on the list in (\ref{list-irr}).
Because of (\ref{tw-gr}), any irreducible $\tau$-twisted $\W$-module is heavily constrained with respect to the spectrum of $L(0)$. More precisely, 

\begin{proposition} \label{irr-mod-shape} Let $M$ be an irreducible $\tau$-twisted $\W$-module as in (\ref{tw-gr}). Then $M$, viewed as a $L(\4,0)$-module, 
is isomorphic to either 
$$\bigoplus_{i \in I} L\left(\4,\frac{1}{40}\right) \oplus \bigoplus_{j \in J} L\left(\4,\frac{21}{40}\right)$$
or
$$\bigoplus_{k \in K} L\left(\4,\frac{1}{8}\right) \oplus \bigoplus_{l \in L} L\left(\4,\frac{13}{8}\right),$$
where $I,J,K,L$ are finite sets. 
\end{proposition}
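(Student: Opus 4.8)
The plan is to combine the $L(0)$-grading constraint~(\ref{tw-gr}) with the classification~(\ref{list-irr}) of the ten irreducible $L(\4,0)$-modules and with the fact, recorded earlier, that $M^0\neq 0$ and $M^1\neq 0$. Once these are in hand the proof reduces to an elementary computation with the ten conformal weights modulo $\tfrac12$.

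First I would invoke rationality of $L(\4,0)$: since $M$ is an ordinary $L(\4,0)$-module, it is a direct sum of copies of the modules listed in~(\ref{list-irr}). If a copy of $L(\4,h)$ occurs in $M$, its inclusion intertwines the two $L(0)$-actions, and $L(0)$ acts on $L(\4,h)$ with spectrum exactly $h+\N$; hence that copy contributes precisely the $L(0)$-eigenvalues $h+\N$ to $M$. By~(\ref{tw-gr}) every $L(0)$-eigenvalue of $M$ lies in $\lambda+\tfrac12\N$, so $h-\lambda\in\tfrac12\Z$; moreover the copy lies entirely inside $M^0$ when $h-\lambda\in\Z$ and entirely inside $M^1$ when $h-\lambda\in\tfrac12+\Z$. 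In particular, all the weights $h$ that occur in $M$ lie in a single residue class modulo $\tfrac12$.

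Next, since $M^0\neq 0$ there is a summand $L(\4,h_0)$ with $h_0-\lambda\in\Z$, and since $M^1\neq 0$ there is a summand $L(\4,h_1)$ with $h_1-\lambda\in\tfrac12+\Z$; hence $h_0$ and $h_1$ are congruent modulo $\tfrac12$ but differ by an odd multiple of $\tfrac12$. Reducing the ten weights of~(\ref{list-irr}) modulo $\tfrac12$ sorts them into the six groups $\{0,3\}$, $\{\tfrac18,\tfrac{13}{8}\}$, $\{\tfrac23\}$, $\{\tfrac25,\tfrac75\}$, $\{\tfrac1{40},\tfrac{21}{40}\}$, $\{\tfrac1{15}\}$. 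The groups $\{\tfrac23\}$ and $\{\tfrac1{15}\}$ are singletons, so they cannot contain both $h_0$ and $h_1$; in $\{0,3\}$ and in $\{\tfrac25,\tfrac75\}$ the two weights differ by an integer, so they cannot play the roles of $h_0$ and $h_1$ either. Only $\{\tfrac18,\tfrac{13}{8}\}$ (difference $\tfrac32$) and $\{\tfrac1{40},\tfrac{21}{40}\}$ (difference $\tfrac12$) survive, and since every summand of $M$ has weight in the group containing $h_0$, this forces $M$ to be built from $\{L(\4,\tfrac18),\,L(\4,\tfrac{13}{8})\}$ or from $\{L(\4,\tfrac1{40}),\,L(\4,\tfrac{21}{40})\}$, which is exactly the assertion.

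The only genuinely substantive point is the claim that an irreducible $L(\4,0)$-summand cannot be split between $M^0$ and $M^1$; this holds because on such a summand $L(0)$ has spectrum $h+\N$, so all of its eigenvalues are congruent modulo $1$ and the summand is confined to one of $M^0$, $M^1$. The other crucial input is $M^0\neq 0\neq M^1$, where simplicity of $\W$ is used: without it one could not exclude, say, a module built from copies of $L(\4,0)$ and $L(\4,3)$ alone, and the case analysis above would not close. Everything else is bookkeeping.
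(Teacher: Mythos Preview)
Your argument is correct and follows essentially the same approach as the paper: use the $L(0)$-grading constraint $\lambda+\tfrac12\N$ together with the rationality of $L(\4,0)$ and the fact that $M^0\neq 0\neq M^1$, then inspect which pairs of weights from the list~(\ref{list-irr}) differ by an element of $\tfrac12+\Z$. The paper's proof is a terse version of yours (it simply says ``easy inspection of allowed weights''), whereas you spell out the mod-$\tfrac12$ grouping explicitly; neither proof addresses the finiteness of $I,J,K,L$ separately, since this is immediate from the definition of a $\tau$-twisted module having finite-dimensional graded components.
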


\begin{proof} The spectrum of $L(0)$ on $M=M^0 \oplus M^1$ must be contained inside the set $\lambda+\frac{\mathbb{N}}{2}$, where $\lambda$ is  the lowest 
weights of an irreducible $L(\frac{4}{5},0)$-modules (\ref{list-irr}). Also, in addition $M^i \neq 0$. This implies, that the absolute value of the difference between the lowest conformal weights of $M^0$ and of $M^1$  must lie within the set $\mathbb{N}+\frac{1}{2}$. 
Easy inspection of allowed weights gives two possibilities: $\lambda=\frac{3}{8}$ or $\lambda=\frac{1}{40}$.
\end{proof}

The next result follows from the fusion rules for $L(\4,0)$ \cite{KMY}, \cite{W} (see also \cite{IK}).
\begin{lemma} \label{simple-curr}
The module $L(\4,3)$ is a simple current (i.e. it permutes equivalence classes of irreducible modules under the fusion product $\times$). In particular, 
$$L \left(\4,3 \right) \times L \left(\4,\frac{13}{8}\right) =L \left(\4,\frac{1}{8}\right) ,L \left(\4,3 \right) \times L \left(\4,\frac{1}{8}\right) =L \left(\4,\frac{13}{8}\right) ,$$
$$L \left(\4,3 \right) \times L \left(\4,\frac{1}{40}\right) =L \left(\4,\frac{21}{40}\right) ,L\left(\4,3 \right) \times L \left(\4,\frac{21}{40}\right) =L \left(\4,\frac{1}{40}\right).$$
\end{lemma}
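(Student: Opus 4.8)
The plan is to deduce everything from the known fusion algebra of the rational vertex algebra $L(\frac45,0)$ with its $10$ irreducible modules listed in (\ref{list-irr}). First I would recall from \cite{W} (see also \cite{IK}) the complete fusion rules; the relevant fact is that $L(\frac45,3)$ is one of the two nontrivial "simple current" primaries of the $(5,6)$ minimal model (the other being $L(\frac45,\frac25)$ up to the usual conventions), which is visible already from the central-charge-$\frac45$ modular $S$-matrix: the corresponding row has unit modulus entries, so by Verlinde the fusion with $L(\frac45,3)$ is multiplicity-free and acts as a permutation on the set of ten irreducibles. Concretely, grading by the conformal weight modulo $1$ (equivalently, recording the "$\mathbb{Z}_3$-charge" inherited from the parafermionic/lattice description in Theorem \ref{W}(a),(b)) pins down which primary goes to which: since $h_{13/8}-h_{1/8}=\frac32\in\mathbb{Z}+\frac12$ is forbidden to be an integer but $h_{13/8}+3,\ h_{1/8}+3$ must reproduce an allowed weight of the fusion product, the only consistent assignment is $L(\frac45,3)\times L(\frac45,\frac18)=L(\frac45,\frac{13}{8})$ and conversely; the same weight bookkeeping forces $L(\frac45,3)\times L(\frac45,\frac1{40})=L(\frac45,\frac{21}{40})$ and conversely.

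In more detail, the key steps in order are: (i) quote the rationality and fusion rules of $L(\frac45,0)$ from \cite{KMY},\cite{W},\cite{IK}; (ii) observe that $L(\frac45,3)$ has quantum dimension $1$, hence $-\times L(\frac45,3)$ is a bijection on the ten-element set of simple modules and its square is the identity (so it is an involution); (iii) compute conformal weights modulo $1$: $3\equiv 0$, $\frac18\equiv\frac18$, $\frac{13}{8}\equiv\frac58$, $\frac1{40}\equiv\frac1{40}$, $\frac{21}{40}\equiv\frac{21}{40}$, and note $\frac18+\frac58=\frac34\equiv -\frac14$, while $\frac1{40}+\frac{21}{40}=\frac{22}{40}=\frac{11}{20}$; since fusion with a weight-$3$ (integral) field preserves the weight class modulo $1$, $L(\frac45,3)\times L(\frac45,\frac18)$ must be a module whose lowest weight lies in $\frac18+\mathbb{Z}$, and the only candidates among the ten are $L(\frac45,\frac18)$ and $L(\frac45,\frac{13}{8})$; (iv) rule out the fixed-point case by invoking that $L(\frac45,3)$ is \emph{not} a fixed point of its own fusion (its orbit structure from the Verlinde formula has no fixed points on this pair — alternatively, $L(\frac45,3)\times L(\frac45,3)=L(\frac45,0)\neq L(\frac45,3)$, and the permutation it induces has order exactly $2$ with the two pairs $\{L(\frac18),L(\frac{13}{8})\}$ and $\{L(\frac1{40}),L(\frac{21}{40})\}$ as its nontrivial $2$-cycles); (v) the same argument applied to the pair $L(\frac1{40}),L(\frac{21}{40})$ gives the second line. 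The bijectivity plus the weight-class constraint together leave no other option, which finishes the proof.

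The main obstacle, such as it is, is not conceptual but bookkeeping: one must be sure that the weight-class argument genuinely isolates a \emph{two-element} set in each case, i.e.\ that no third irreducible module on the list in (\ref{list-irr}) shares the residue $\frac18$ or $\frac1{40}$ modulo $1$. This is settled by direct inspection of the ten weights $0,\frac18,\frac23,\frac{13}{8},3,\frac25,\frac1{40},\frac1{15},\frac{21}{40},\frac75$: their residues mod $1$ are pairwise distinct except for the forced pairings $\{0,3\}$ and $\{\frac25,\frac75\}$ and $\{\frac18,\frac{13}{8}\}$, so the constraint is sharp. Everything else is an application of the Verlinde/simple-current formalism for the already-established rational vertex algebra $L(\frac45,0)$, so no new computation beyond what is cited is required.
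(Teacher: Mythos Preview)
The paper does not give a proof at all: it simply states that the lemma ``follows from the fusion rules for $L(\frac45,0)$ [KMY], [W] (see also [IK]).'' In Kac--table notation $L(\frac45,3)=\phi_{1,5}$ and the minimal--model rule $\phi_{1,5}\times\phi_{r,s}=\phi_{r,6-s}$ immediately yields all four equalities. Your strategy of citing these references is therefore exactly the paper's approach, and citing them would already be a complete proof.

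However, the additional ``weight class modulo $1$'' argument you layer on top is wrong and, as written, does not establish the pairings. You assert that fusion with a weight--$3$ primary preserves the lowest conformal weight modulo $1$, and that $\tfrac18$ and $\tfrac{13}{8}$ (resp.\ $\tfrac{1}{40}$ and $\tfrac{21}{40}$) share a residue class mod $1$. They do not: $\tfrac{13}{8}-\tfrac18=\tfrac32$ and $\tfrac{21}{40}-\tfrac{1}{40}=\tfrac12$, so the residues are $\tfrac18$ vs.\ $\tfrac58$ and $\tfrac{1}{40}$ vs.\ $\tfrac{21}{40}$. In fact the only coincidences among the ten residues are $\{0,3\}$ and $\{\tfrac25,\tfrac75\}$ --- precisely the pairs that assemble into \emph{untwisted} $\W$-modules --- whereas the four modules in the lemma carry monodromy charge $\tfrac12$ with respect to the simple current $L(\frac45,3)$, which is why they appear in the $\tau$-twisted sector. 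So your step~(iii) proves nothing about $\{\tfrac18,\tfrac{13}{8}\}$ or $\{\tfrac{1}{40},\tfrac{21}{40}\}$, and your final ``bookkeeping'' paragraph contains the same arithmetic error. Drop the weight--class heuristic and either cite the explicit fusion rules directly, or quote the simple--current rule $\phi_{1,t-1}\times\phi_{r,s}=\phi_{r,t-s}$ for the $(s,t)=(5,6)$ model.
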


Now let us examine the number of irreducible summands in decompositions in Proposition \ref{irr-mod-shape}.

\begin{proposition} \label{irr-prec} Let $M$ be as in Proposition \ref{irr-mod-shape}. Then, viewed as a $Vir$-module, 
$$M \cong L\left(\4,\frac{1}{40}\right) \oplus L\left(\4,\frac{21}{40}\right)$$ or 
$$M \cong L\left(\4,\frac{1}{8}\right) \oplus L\left(\4,\frac{13}{8}\right).$$
\end{proposition}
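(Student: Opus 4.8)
The plan is to determine the two graded pieces $M^{0}$ and $M^{1}$ of $M$ separately: the $\tau$-twisted Zhu algebra $A_{\tau}(\W)$ pins down $M^{0}$, and the intertwining operator extracted in Proposition~\ref{tw-to-int}, together with the simple-current fusion rules of Lemma~\ref{simple-curr}, then pins down $M^{1}$.

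First I would handle $M^{0}$. By Lemma~\ref{help} the algebra $A_{\tau}(\W)$ is a quotient of $k[x]$, hence commutative, so each of its irreducible modules is one-dimensional. Since $M$ is an irreducible (hence admissible) $\tau$-twisted $\W$-module, the Zhu-algebra correspondence of \cite{DLM} identifies its lowest $L(0)$-eigenspace $M_{\lambda}$ with an irreducible $A_{\tau}(\W)$-module, whence $\dim M_{\lambda}=1$. In either case of Proposition~\ref{irr-mod-shape} the summands of $M$ whose minimal conformal weight equals $\lambda$ --- the copies of $L(\4,\frac{1}{40})$ when $\lambda=\frac{1}{40}$, or of $L(\4,\frac{1}{8})$ when $\lambda=\frac{1}{8}$ --- each contribute a one-dimensional subspace to $M_{\lambda}$, so there is exactly one such summand. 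Comparing conformal weights modulo $1$ (those of $L(\4,\frac{1}{40})$ lie in $\frac{1}{40}+\Z$, those of $L(\4,\frac{21}{40})$ in $\frac{1}{40}+\frac{1}{2}+\Z$, and similarly $\frac{1}{8}$ versus $\frac{13}{8}$) shows that this unique summand is all of $M^{0}$ and that the remaining summands constitute $M^{1}$. Thus $M^{0}$ is an \emph{irreducible} $L(\4,0)$-module --- $L(\4,\frac{1}{40})$ or $L(\4,\frac{1}{8})$ --- while $M^{1}$ is a direct sum of finitely many copies of $L(\4,\frac{21}{40})$ (resp. of $L(\4,\frac{13}{8})$).

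Next I would show there is only one such copy. By Proposition~\ref{tw-to-int}, the restriction $I(\cdot,x)$ of $Y^{\tau}$ to $V^{1}=L(\4,3)$ acting on $M^{0}$ is an intertwining operator of type $\binom{M^{1}}{L(\4,3)\ \ M^{0}}$ for $L(\4,0)$; let $N\subseteq M^{1}$ be the linear span of its coefficients, which (like the coefficient span of any intertwining operator) is an $L(\4,0)$-submodule of $M^{1}$. It is nonzero by the lemma asserting $Y^{\tau}(u,x)m\neq 0$, and $M^{0}\oplus N$ is a $\W$-submodule: $V^{0}$ preserves each summand, $V^{1}$ maps $M^{0}$ into $N$ by construction, and $V^{1}$ maps $N\subseteq M^{1}$ into $M^{0}$. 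Irreducibility of $M$ then forces $N=M^{1}$. On the other hand, by Lemma~\ref{simple-curr} the fusion product $L(\4,3)\times L(\4,\frac{1}{40})$ equals $L(\4,\frac{21}{40})$ with coefficient $1$, so up to a scalar there is a unique nonzero intertwining operator $I_{0}$ of type $\binom{L(\4,\frac{21}{40})}{L(\4,3)\ \ L(\4,\frac{1}{40})}$, and its coefficient span is all of $L(\4,\frac{21}{40})$ since it is a nonzero intertwining operator into an irreducible module. Writing $M^{1}$ as a sum of, say, $r$ copies of $L(\4,\frac{21}{40})$, the composition of $I$ with the projection onto any copy is a scalar multiple of $I_{0}$ (after identification), so $N$ is a single ``diagonal'' copy of $L(\4,\frac{21}{40})$, whose graded dimension equals that of one $L(\4,\frac{21}{40})$; comparing with $N=M^{1}$ gives $r=1$. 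The $\frac{1}{8}$-case is identical with $\frac{1}{40},\frac{21}{40}$ replaced by $\frac{1}{8},\frac{13}{8}$, and we conclude that $M\cong L(\4,\frac{1}{40})\oplus L(\4,\frac{21}{40})$ or $M\cong L(\4,\frac{1}{8})\oplus L(\4,\frac{13}{8})$ as $Vir$-modules.

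I expect the main obstacle to be the identity $N=M^{1}$, i.e. the assertion that running $M^{0}$ through the twisted action of $V^{1}$ already exhausts $M^{1}$: this is where irreducibility of $M$ must be used, and it requires verifying carefully that $M^{0}\oplus N$ is a genuine $\W$-submodule. Closely related is the concluding ``diagonal image'' dimension count, which is precisely what excludes multiplicities greater than $1$ in $M^{1}$ and therefore depends essentially on the fusion coefficient being $1$ (Lemma~\ref{simple-curr}).
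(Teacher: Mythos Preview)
Your argument is correct and follows the same overall strategy as the paper: first use the commutativity of $A_\tau(\W)$ (Lemma~\ref{help}) to force the top piece $M^0$ to be a single irreducible $L(\4,0)$-module, and then use the simple-current fusion rules of Lemma~\ref{simple-curr} together with irreducibility of $M$ to pin down $M^1$. The only difference is in how the second step is executed. The paper appeals to the argument of \cite[Lemma~5.3]{KMY}: if $|J|\geq 2$ one finds a copy of $L\!\left(\4,\frac{21}{40}\right)$ inside $M^1$ on which $V^1=L(\4,3)$ acts trivially, and this copy is then a proper nonzero twisted $\W$-submodule of $M$, contradicting irreducibility. You argue dually, via Proposition~\ref{tw-to-int}: the image $N$ of the $V^1$-action on $M^0$ is an $L(\4,0)$-submodule of $M^1$, the fusion coefficient $1$ forces $N$ to be a single diagonal copy, and since $M^0\oplus N$ is a twisted $\W$-submodule, irreducibility gives $N=M^1$. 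Both versions hinge on exactly the same one-dimensionality of the relevant fusion space; your formulation has the modest advantage of being self-contained rather than deferring to \cite{KMY}.
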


\begin{proof}
We prove discuss the first assertion. Because the twisted module in question is irreducible and $A_\tau(\W)$ is commutative the top level must be an (irreducible) one-dimensional module. Therefore the set $I$ must be 
a singleton. If $|J| \geq 2$, the direct sum $\bigoplus_{j \in J} L(\4,\frac{21}{40})$  decomposes  into at least two irreducible $L(\4,0)$-modules. 
To rule out this case we apply the same argument as in the proof of  Lemma 5.3 \cite{KMY}. Their argument and Lemma \ref{simple-curr} 
yields a $\W$-module on $L\left(\4,\frac{21}{40}\right)$, such that $L(\4,0)$ acts trivially on it. But this is clearly a contradiction, so $|J|=1$.

\end{proof}

\begin{theorem} \label{unique-decomp}
Let $M$ be an irreducible $\tau$-twisted $\W$-module such that 
$$M \cong L\left(\4,\frac{1}{40}\right) \oplus L\left(\4,\frac{21}{40}\right)$$ or 
$$M \cong L\left(\4,\frac{1}{8}\right) \oplus L\left(\4,\frac{13}{8}\right).$$
Then such $M$ is unique up to isomorphism.
\end{theorem}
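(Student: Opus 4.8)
The plan is to parallel the proof of Proposition~\ref{unique}, using uniqueness of intertwining operators for the rational vertex algebra $L(\4,0)$ in place of the single intertwining operator used there. Write $\W = V^0 \oplus V^1$ with $V^0 = L(\4,0)$, $V^1 = L(\4,3)$, and let $M, M'$ be two irreducible $\tau$-twisted $\W$-modules with the prescribed Virasoro decomposition; I treat the first case, the second being identical. Both $M$ and $M'$ restrict to $L(\4,0)$-modules isomorphic to $L(\4,\frac1{40}) \oplus L(\4,\frac{21}{40})$; since these are non-isomorphic irreducibles and $\frac{21}{40}-\frac1{40}=\frac12$, the $\mathbb{Z}_2$-grading (\ref{tw-gr}) is forced, so $M^0 \cong M'^0 \cong L(\4,\frac1{40})$ and $M^1 \cong M'^1 \cong L(\4,\frac{21}{40})$. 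As $L(\4,0)$ is rational and these targets are irreducible, I choose conformal-weight-preserving $L(\4,0)$-module isomorphisms $g_0 : M^0 \to M'^0$ and $g_1 : M^1 \to M'^1$, each unique up to a nonzero scalar, and set $g = g_0 \oplus g_1$.

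The twisted $\W$-module structure on $M$ is recorded by (i) the $L(\4,0)$-action $Y^\tau|_{V^0}$, which $g_0, g_1$ already intertwine with that of $M'$, and (ii) the two off-diagonal blocks $I_M : V^1 \to {\rm Hom}(M^0,M^1)\{x\}$ and $J_M : V^1 \to {\rm Hom}(M^1,M^0)\{x\}$ of $Y^\tau|_{V^1}$. By Proposition~\ref{tw-to-int}, $I_M$ is an intertwining operator of type ${M^1 \choose V^1 \ \ M^0}$ and $J_M$ one of type ${M^0 \choose V^1 \ \ M^1}$, and both are nonzero by the Lemma preceding Lemma~\ref{help}. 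By Lemma~\ref{simple-curr}, $L(\4,3)\times L(\4,\frac1{40}) = L(\4,\frac{21}{40})$ and $L(\4,3)\times L(\4,\frac{21}{40}) = L(\4,\frac1{40})$, and since $L(\4,3)$ is a simple current these fusion products have multiplicity one; hence, by \cite{FHL}, each of the two relevant spaces of intertwining operators is one-dimensional. Transporting the off-diagonal blocks of $M'$ through $g$ therefore gives nonzero scalars $\alpha, \beta$ with $g_1^{-1} I_{M'}(\cdot,x) g_0 = \alpha\, I_M(\cdot,x)$ and $g_0^{-1} J_{M'}(\cdot,x) g_1 = \beta\, J_M(\cdot,x)$ (the product $\alpha\beta$ being independent of the choices of $g_0,g_1$). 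Once $\alpha\beta = 1$ is known, rescaling $g_1 \mapsto \alpha g_1$ makes $\alpha = \beta = 1$, and then $g$ intertwines $Y^\tau_M$ with $Y^\tau_{M'}$ on $V^0$ and on both off-diagonal blocks of $V^1$, hence on all of $\W$, so $g : M \to M'$ is an isomorphism of $\tau$-twisted $\W$-modules.

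Everything thus reduces to the identity $\alpha\beta = 1$, which I expect to be the real obstacle. For it I would invoke the one instance of the twisted Jacobi identity (\ref{twj}) not yet used: the case $u,v \in V^1$, where $Y(u,x_0)v \in V^0((x_0))$ so the right-hand side is the genuine $L(\4,0)$-action $Y^\tau|_{V^0}$. Evaluated on $M^0$, its left-hand side is built from the compositions $J_M(\cdot,x_1)I_M(\cdot,x_2)$ and $J_M(\cdot,x_2)I_M(\cdot,x_1)$; conjugating the corresponding identity for $M'$ by $g_0$ scales its left-hand side by $\alpha\beta$ and fixes its right-hand side (as $g_0$ is an $L(\4,0)$-module map), so comparing with the identity for $M$ shows that $\alpha\beta$ times the right-hand side equals the right-hand side, as operators on $M^0$. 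Choosing $u,v \in V^1$ so that $Y(u,x_0)v$ has ${\bf 1}$ among its components --- possible because $Y|_{V^1\otimes V^1}\ne 0$ in $\W$ (the defining property in Proposition~\ref{unique}) and the invariant bilinear form on $\W$ is nondegenerate on the one-dimensional space $(V^1)_3$ --- makes the right-hand side equal to the nonzero operator ${\rm Id}_{M^0}$, forcing $\alpha\beta = 1$. The second case runs identically, using $L(\4,3)\times L(\4,\8) = L(\4,\fth)$ from Lemma~\ref{simple-curr}. A shorter but less self-contained route would be to invoke the twisted Zhu correspondence of \cite{DLM}: an irreducible $\tau$-twisted $\W$-module is determined by its top level as an $A_\tau(\W)$-module, which by Lemma~\ref{help} is one-dimensional and --- provided $A_\tau(\W)$ is generated by the image of $\omega$ --- pinned down by the lowest weight $\lambda \in \{\frac1{40}, \8\}$ of Proposition~\ref{irr-prec}.
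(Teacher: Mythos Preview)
Your main argument is correct, but it takes a different route from the paper. The paper's proof is precisely your ``shorter but less self-contained route'' at the end: it invokes the correspondence of \cite{DLM} between irreducible $\tau$-twisted $\W$-modules and irreducible $A_\tau(\W)$-modules, together with Lemma~\ref{help} (which says $A_\tau(\W)$ is a quotient of $k[x]$ with $x=[\omega]$), to conclude that an irreducible $\tau$-twisted module is determined by its one-dimensional top level, hence by its lowest conformal weight. That is the entire argument in the paper (modulo a notational slip: the paper writes $A(\W)$ where $A_\tau(\W)$ is meant). Your caveat ``provided $A_\tau(\W)$ is generated by the image of $\omega$'' is exactly what Lemma~\ref{help} supplies, so no additional hypothesis is needed.

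Your primary approach --- building an explicit isomorphism by matching the off-diagonal blocks of $Y^\tau|_{V^1}$ via one-dimensionality of the relevant fusion spaces (Lemma~\ref{simple-curr}, Proposition~\ref{tw-to-int}) and then forcing $\alpha\beta=1$ from the $u,v\in V^1$ instance of the twisted Jacobi identity --- is sound and more elementary in that it avoids the twisted Zhu machinery, using only the simple-current fusion rules, the simplicity of $\W$, and faithfulness of $M^0$ as an $L(\4,0)$-module. (For the last step you do not actually need ${\bf 1}$ specifically among the coefficients of $Y(u,x_0)v$: any nonzero coefficient suffices, since $L(\4,0)$ is simple and $M^0\neq 0$ forces $Y^\tau_0(a,x_2)\neq 0$ for $a\neq 0$.) The cost is length and a somewhat delicate endgame; the paper's route is much quicker but leans on the general theory of \cite{DLM}. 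Your construction has the pleasant feature of making the isomorphism explicit and of mirroring the proof of Proposition~\ref{unique}.
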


\begin{proof}
Recall that irreducible $\W$-modules are in one-to-one correspondence with the modules for the Zhu algebra $A(\W)$, which is isomorphic to 
a quotient of the polynomial algebra $k[x]$, where $x=[\omega]$. Every such module is one-dimensional so the top level of an irreducible 
$\W$-module must be one-dimensional.The rest follows from Proposition \ref{irr-prec}
\end{proof}

In the next section (cf. Proposition \ref{decomp-tw})  we construct two (irreducible) $\tau$-twisted $\W$-modules $\WW^\tau(\frac{1}{40})$ and $\WW^\tau(\frac{1}{8})$,
which decompose as modules in Theorem \ref{unique-decomp}. Consequently, combined with Lemma \ref{help} 
we immediately obtain

\begin{corollary} \label{classify} The vertex algebra $\W$ has precisely two $\tau$-twisted irreducible modules. 
Moreover, the twisted Zhu algebra $A_{\tau}(\W)$  is isomorphic to $\mathbb{C}[x]/ \langle (x-1/40)(x-1/8) \rangle$.
\end{corollary}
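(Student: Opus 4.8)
The plan is to establish Corollary \ref{classify} by combining the classification of possible $\tau$-twisted module shapes (Propositions \ref{irr-mod-shape} and \ref{irr-prec}, Theorem \ref{unique-decomp}) with an existence statement for both candidate modules. First I would invoke Theorem \ref{unique-decomp}, which already tells us that any irreducible $\tau$-twisted $\W$-module is, as a $Vir$-module, isomorphic to exactly one of $L(\4,\frac1{40}) \oplus L(\4,\frac{21}{40})$ or $L(\4,\frac18) \oplus L(\4,\frac{13}{8})$, and that each of these two shapes supports at most one such module up to isomorphism. So the number of irreducible $\tau$-twisted $\W$-modules is at most two, and to get exactly two it remains to exhibit the modules. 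This is precisely what is deferred to the next section: the construction of $\WW^\tau(1/40)$ and $\WW^\tau(1/8)$ in Proposition \ref{decomp-tw} shows both shapes actually occur. Hence there are precisely two irreducible $\tau$-twisted $\W$-modules.

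For the statement about $A_\tau(\W)$, the plan is to use the general correspondence (from \cite{DLM}) between irreducible $\tau$-twisted $V$-modules and irreducible $A_\tau(V)$-modules, together with Lemma \ref{help}, which says $A_\tau(\W)$ is a commutative quotient of $k[x]$ with $x = [\omega]$. A commutative quotient of $k[x]$ is $k[x]/\langle p(x)\rangle$ for some polynomial $p$, and its irreducible (necessarily one-dimensional) modules correspond to the distinct roots of $p$. Since $L(0)$ acts on the top level of a $\tau$-twisted module by the scalar $x$, and the two twisted modules have top-level conformal weights $\frac1{40}$ and $\frac18$ respectively (these being the values of $\lambda$ identified in the proof of Proposition \ref{irr-mod-shape}), the distinct roots of $p$ are exactly $\frac1{40}$ and $\frac18$. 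Therefore $p(x)$ is a power of $(x-\frac1{40})(x-\frac18)$; to pin it down to the squarefree polynomial itself I would argue that $A_\tau(\W)$ has no nilpotents, which follows because $\W$ is $\tau$-rational by Theorem \ref{main-thm}(b) — equivalently, one can note that the number of irreducible modules (two) equals $\dim A_\tau(\W)$ forces semisimplicity of $A_\tau(\W)$, and a semisimple commutative quotient of $k[x]$ with two one-dimensional modules is exactly $\C[x]/\langle (x-\frac1{40})(x-\frac18)\rangle$.

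I expect the only genuine subtlety to be justifying that $A_\tau(\W)$ is semisimple (reduced) rather than merely a quotient of $k[x]$ with the right set of roots; a priori it could be $k[x]/\langle (x-\frac1{40})^a(x-\frac18)^b\rangle$ with $a,b\ge 1$. The clean way around this is to note that $\W$ being $\tau$-rational (Theorem \ref{main-thm}(b)) means every admissible $\tau$-twisted module is completely reducible, which by the Dong–Li–Mason theory forces $A_\tau(\W)$ to be a finite-dimensional semisimple algebra; being also commutative and having exactly two one-dimensional irreducibles with distinct central characters $\frac1{40}$ and $\frac18$, it must be $\C \oplus \C \cong \C[x]/\langle (x-\frac1{40})(x-\frac18)\rangle$. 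Everything else is a direct bookkeeping application of results already proven in the excerpt, so there is no heavy computation here — the corollary is essentially the packaging of Theorem \ref{unique-decomp}, Lemma \ref{help}, and the forthcoming Proposition \ref{decomp-tw}.
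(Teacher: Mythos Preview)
Your proposal is correct and follows the same outline as the paper: at most two irreducibles from Theorem \ref{unique-decomp}, existence of both from Proposition \ref{decomp-tw}, and the Zhu algebra statement from Lemma \ref{help} plus the Dong--Li--Mason bijection.

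Two small remarks on the semisimplicity step. First, your ``equivalently'' alternative is circular as written: you do not yet know that $\dim A_\tau(\W)=2$, so you cannot compare it to the number of irreducibles to deduce semisimplicity. Second, invoking $\tau$-rationality is a forward reference in the paper's order (that theorem is proved immediately after the corollary); there is no logical circularity, since the rationality proof only uses Proposition \ref{irr-prec} and Lemma \ref{simple-curr}, but a more self-contained argument is available. Namely, the proof of Lemma \ref{help} actually shows more than its statement: since $V^1=L(\4,3)$ maps to zero in $A_\tau(\W)$ and the twisted $\circ,*$ operations restricted to $V^0=L(\4,0)$ coincide with the ordinary ones, $A_\tau(\W)$ is a quotient of the ordinary Zhu algebra $A(L(\4,0))$. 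The latter is semisimple because $L(\4,0)$ is rational, so $A_\tau(\W)$ is automatically reduced and the conclusion $A_\tau(\W)\cong\C[x]/\langle(x-\tfrac{1}{40})(x-\tfrac{1}{8})\rangle$ follows without appeal to $\tau$-rationality. This is presumably what the paper has in mind when it says the corollary follows ``immediately'' from Lemma \ref{help}.
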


\begin{theorem} The vertex algebra $\W$ is $C_2$-cofinite and $\tau$-rational.
\end{theorem}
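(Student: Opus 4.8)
The plan is to establish the two properties — $C_2$-cofiniteness and $\tau$-rationality — essentially independently, each leveraging the structural results already obtained for $\W$.

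For $C_2$-cofiniteness, I would argue as follows. We already know that $\W \cong L(\tfrac45,0) \oplus L(\tfrac45,3)$ as a Virasoro module, and $L(\tfrac45,0)$ is a rational, $C_2$-cofinite vertex operator algebra (it is a minimal model vertex algebra). Moreover, as a module for the rational subalgebra $L(\tfrac45,0)$, the space $\W$ is a finite direct sum of irreducible modules (just two summands). In general, if $V_0 \subset V$ is a $C_2$-cofinite vertex operator subalgebra of $V$ with the same conformal vector, and $V$ is finitely generated as a module over $V_0$ (or, more concretely here, a finite sum of irreducible $V_0$-modules each of which is $C_2$-cofinite), then $V$ itself is $C_2$-cofinite. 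Alternatively, and perhaps more cleanly given the realizations in Theorem \ref{W}, one can invoke construction (b): $\W$ is a subalgebra $M^0$ of the lattice vertex algebra $V_L$ with $L = \sqrt{2}Q$, $Q$ the $A_2$ root lattice; lattice vertex algebras are $C_2$-cofinite, and $M^0$ arises as a fixed-point (orbifold-type) subalgebra which, by the general orbifold $C_2$-cofiniteness results, inherits $C_2$-cofiniteness. Either route gives the first half.

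For $\tau$-rationality, recall that $V$ is $\tau$-rational if every admissible $\tau$-twisted $\W$-module is completely reducible. The key input is the associative algebra $A_\tau(\W)$: by Corollary \ref{classify}, $A_\tau(\W) \cong \mathbb{C}[x]/\langle (x - 1/40)(x - 1/8)\rangle$, which is semisimple (two distinct roots, so it is $\mathbb{C} \oplus \mathbb{C}$). By the general theory of twisted Zhu algebras (Dong--Li--Mason, \cite{DLM}), an admissible $\tau$-twisted $V$-module $M$ is irreducible if and only if its ``top level'' $\Omega(M)$ (or the appropriate $A_\tau(V)$-module attached to it) is an irreducible $A_\tau(V)$-module, and there is a bijection between irreducible admissible $\tau$-twisted modules and irreducible $A_\tau(V)$-modules. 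Semisimplicity of $A_\tau(\W)$ together with the rationality input already available for $\W$ (it is rational as an untwisted vertex algebra, and $C_2$-cofinite from the first part) lets one run the standard argument: using $C_2$-cofiniteness one reduces to showing there is no nonsplit self-extension of the (finitely many) irreducible $\tau$-twisted modules, and the obstruction to splitting lives in a space controlled by $A_\tau(\W)$ and by fusion/intertwining data among $L(\tfrac45,0)$-modules, all of which we control. Concretely: given an admissible $\tau$-twisted module $M$, restrict to $L(\tfrac45,0)$; since $L(\tfrac45,0)$ is rational, $M$ is a direct sum of irreducible $L(\tfrac45,0)$-modules, and by Proposition \ref{irr-mod-shape} only the weights $1/40, 21/40, 1/8, 13/8$ occur; one then packages these, using Proposition \ref{tw-to-int} relating the twisted action to intertwining operators, to see that $M$ decomposes into copies of $\WW^\tau(1/40)$ and $\WW^\tau(1/8)$.

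The main obstacle I anticipate is not the $C_2$-cofiniteness (that is fairly mechanical once one decides which realization to exploit), but making the $\tau$-rationality argument genuinely rigorous rather than a citation: one must verify that the hypotheses of the Dong--Li--Mason twisted-module machinery actually apply to $\W$ (in particular that $\W$ is $C_2$-cofinite and its twisted Zhu algebra is finite-dimensional and semisimple, both of which we now have), and then rule out nonsemisimple admissible twisted modules. The cleanest path is probably: (i) cite that a $C_2$-cofinite, rational VOA with semisimple $\tau$-twisted Zhu algebra is $\tau$-rational — but if such a blanket statement is not available in the literature in exactly that form, one falls back on the explicit decomposition argument via $L(\tfrac45,0)$-restriction plus Theorem \ref{unique-decomp}, which shows every irreducible admissible $\tau$-twisted module is one of the two listed, and then uses the vanishing of the relevant $\mathrm{Ext}^1$ (equivalently, the absence of logarithmic twisted modules, which again follows from $C_2$-cofiniteness of $L(\tfrac45,0)$ and the semisimplicity of $A_\tau(\W)$) to conclude complete reducibility. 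I would write the proof in the latter, more self-contained style, invoking the earlier propositions of this section rather than a black-box theorem.
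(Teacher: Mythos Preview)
Your plan is correct and matches the paper's approach: $C_2$-cofiniteness is deduced in one line from the $C_2$-cofinite subalgebra $L(\tfrac45,0)$ sharing the conformal vector, and $\tau$-rationality is proved by restricting an admissible $\tau$-twisted module to $L(\tfrac45,0)$, using its rationality together with Propositions \ref{irr-mod-shape}, \ref{tw-to-int}, and \ref{irr-prec} to decompose into copies of the two irreducibles. The paper's execution of the last step is slightly more concrete than your $\mathrm{Ext}^1$/$A_\tau$-semisimplicity framing: it picks a top-level vector $v$, shows $\W\cdot v \cong L(\tfrac45,\tfrac18)\oplus L(\tfrac45,\tfrac{13}{8})$ using the simple-current fusion rules for $L(\tfrac45,3)$ (Lemma \ref{simple-curr}) and the classification in Proposition \ref{irr-prec}, and then iterates over a basis of the top level to exhaust $M$---so you should invoke Lemma \ref{simple-curr} explicitly when you write it up.
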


\begin{proof}
The vertex algebra $\W$ contains a $C_2$-cofinite subalgebra (i.e. $L(\4,0)$, with the same conformal vector) thus it is $C_2$-cofinite itself.

To prove the rationality we follow the standard arguments as in Theorem 5.6 in \cite{KMY}, which we essentially repeat here.
We only have to prove complete reducibility. So let $M$ be an arbitrary admissible $\W$-module.
We may split $M=M_{(1/8)} \oplus M_{(1/40)}$, where the weights of $M_{(1/8)}$ are contained inside $1/8+\frac{1}{2} \mathbb{N}_{\geq 0}$, 
and the weights of  $M_{(1/40)}$ are in $1/40+\frac{1}{2} \mathbb{N}_{\geq 0}$. Indeed, this follows from complete reducibility with respect to $L(\4,0)$, Lemma \ref{simple-curr}, Proposition \ref{tw-to-int} and the fact that allowed lowest weights are $\frac{1}{8}$ and $\frac{1}{40}$.
Our proof of complete reducibility of $M_{(1/8)}$ is essentially the same as the proof for $M_{(1/40)}$, so let us assume $M=M_{(1/8)}$ for simplicity.
Consider the (top) weight $1/8$ subspace $M(1/8)$ of $M_{(1/8)}$. This is also an $A(\W)$-module. Easy analysis shows that, as $L(\4,0)$-module, $M \cong L(\4,\8)^{\oplus^m} \oplus L(\4,\fth)^{\oplus^n}$, with some multiplicities $m$ and $n$. The multiplicity $m$ is precisely the dimension of the weight $1/8$ subspace. 
To finish the proof we have to argue that $m=n$ and $M \cong (L(\4,\8) \oplus L(\4,\fth))^{\oplus^m}$, as $\W$-modules.
Choose $0 \neq v \in M(1/8)$ and consider $\W \cdot v$. We claim that $\W \cdot v \cong L(\4,\8) \oplus L(\4,\fth)$. Clearly there could be only
one copy of $L(\4,\8)$ inside $\W \cdot  v$. Also, from the fusion rules Lemma \ref{simple-curr}, restriction of the module map $Y|_{L(\4,3) \otimes L(\4,\8)}$ where 
 $L(\4,\8) \subset \W \cdot v$, must land inside $W$, where $W$ is isomorphic to $L(\4,\fth)$, or is plainly zero. If the image is zero then $L(\4,\8)$ becomes an irreducible module for $\W$, contradicting our classification of $\W$-modules in Proposition \ref{irr-prec}.
We conclude $\W \cdot v  \cong L(\4,\8) \oplus L(\4,\fth)$. 

Now, take a nonzero vector $v' \in M(1/8)$ in the complement of $\mathbb{C} v$ and repeat the procedure.
Then $\W \cdot v' \cap \W \cdot v$ must be trivial or $L(\4,\fth)$. The latter case cannot occur because of Proposition \ref{irr-prec}.
This way we obtain a decomposition of submodule of $M$ isomorphic to $(L(\4,\8) \oplus L(\4,\fth))^{\oplus^m}$.
The condition $m=n$ must be satisfied, otherwise we could quotient $M$ with the submodule $(L(\4,\8) \oplus  L(\4,\fth))^{\oplus^m}$ and obtain 
a module of  lowest weight $\frac{13}{8}$, again a contradiction. \end{proof}

\section{Standard $A_2^{(2)}$-modules and construction of $\tau$-twisted $\W$-modules}

In this section, our focus is the Kac-Moody Lie algebra of type $A_2^{(2)}$ and its standard modules. We will denote by $\sigma$ a principal (order $6$)  automorphism of $\widehat{sl_3}$, such that 
$$sl_3=\bigoplus_{j=0}^5 sl_3[j], \ \ sl_3[j]=\{ a \in sl_3 : \sigma(a)=\xi^j a \},$$
where $\xi$ is a primitive $6$th root of unity.
Let $\Lambda$ be a dominant weight of level $l$ for $A_2^{(2)}$. We denote by $L^\sigma_{sl_3}(\Lambda)$ the $\sigma$-twisted 
$A_2^{(2)}$-module, which is also an $L_{sl_3}(l \Lambda_0)$-module \cite{Li}. 
In particular, $L^\sigma_{sl_3}(\Lambda_2)$ (which is of level $2$)
and $L^\sigma_{sl_3}(2 \Lambda_1)$ are $\sigma$-twisted $L_{sl_3}(2 \Lambda_0)$-modules. Theorem \ref{alex} gives construction 
of both modules inside the  tensor product of the basic module $L_{sl_3}^\sigma(\Lambda_1) \otimes L_{sl_3}^\sigma(\Lambda_1) $. Indeed,  
it is easy to see that $v_{\Lambda_1} \otimes v_{\Lambda_1}$ is a highest weight module for $L_{sl_3}^\sigma(2 \Lambda_1)$ and 
the vector $f_1 \cdot v_{\Lambda_1} \otimes v_{\Lambda_1}- v_{\Lambda_1} \otimes f_1 \cdot  v_{\Lambda_1}$ generates the module $L_{sl_3}^\sigma(\Lambda_0)$. Here $\{e_0,f_0,h_0,e_1,f_1,h_1 \}$ is the canonical
set of generators of $A_2^{(2)}$.   We denote the twisted module map with $Y^\sigma( \cdot, x)$.
In particular, for any  $x \in sl_3[j]$, we have 
$$Y^\sigma(x(-1){\bf 1},x)=\sum_{n \in \mathbb{Z}} x(n+\frac{j}{6})x^{-n-\frac{j}{6}-1}.$$

Consider
$$\omega_{k=l}=\frac{1}{2(l+3)} \sum_{i=1}^8 u_i(-1)\bar{u}_i(-1){\bf 1} \in L_{sl_3}(l \Lambda_0),$$
the Sugawara conformal vector of central charge $\frac{8l}{l+3}$, where $\{ u_i \}$ and $\{\bar{u}_i \}$ are conveniently chosen orthogonal bases
such that $\sigma(u_i)=\sigma^{-1}(\bar{u}_i)$, so that $u_i(-1)\bar{u}_i(-1){\bf 1}$ is fixed under the automorphism (we can actually choose $\{ \bar{u}_i \}$ 
to be a permutation of $\{ u_i \}$ \cite{Bo}).  Thus 
$$Y(\omega_{l},x)=\sum_{n \in \mathbb{Z}} L^\sigma_{k=l}(0) x^{-n-2}.$$
Denote by 
$$\omega=\omega_{k=1} \otimes {\bf 1} - {\bf 1} \otimes \omega_{k=1}-\omega_{k=2} \in L_{sl_3}(\Lambda_0) \otimes L_{sl_3}(\Lambda_0)$$
the coset Virasoro generator of central charge $2+2-\frac{16}{5}=\frac{4}{5}$, and we let
$$Y(\omega,x)=\sum_{n \in \mathbb{Z}} L(n) x^{-n-2}.$$

For related coset constructions see \cite{AP}. The following lemma is the crucial technical fact.

\begin{lemma}  \label{hw}
We have $$L^\sigma_{k=1}(0) v_{\Lambda_1}=\frac{5}{72} v_{\Lambda_1},$$
and 
$$L^\sigma_{k=2}(0) v_{2 \Lambda_1}=\frac{41}{360}v_{2 \Lambda_1}, \ \ L^\sigma_{k=2}(0)v_{\Lambda_0}=\frac{13}{180} v_{\Lambda_0}.$$
Consequently, 
$$L(0)v_{2 \Lambda_1}=\frac{1}{40} v_{2 \Lambda_1}, \ \ L(0) v_{\Lambda_0}=\frac{1}{8} v_{\Lambda_0}$$
\end{lemma}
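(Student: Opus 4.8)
The statement is a direct eigenvalue computation, so the plan is to reduce everything to evaluating the Sugawara $L^\sigma_{k}(0)$ on the relevant highest weight vectors, and the main point is to set up the twisted Sugawara operator correctly in the principal grading. First I would fix notation: decompose $sl_3=\bigoplus_{j=0}^5 sl_3[j]$ under $\sigma$, choose dual bases $\{u_i\},\{\bar u_i\}$ adapted to this grading (as in the excerpt, with $\sigma(u_i)=\sigma^{-1}(\bar u_i)$), and write out the $\sigma$-twisted vertex operator of $\omega_{k=l}$. Expanding $Y^\sigma(\omega_{k=l},x)=\sum_n L^\sigma_{k=l}(n)x^{-n-2}$ and extracting the coefficient of $x^{-2}$ gives
$$L^\sigma_{k=l}(0)=\frac{1}{2(l+3)}\sum_{i=1}^{8}\sum_{n\in\mathbb{Z}+j_i/6} \, {}^{\circ}_{\circ}\, u_i(n)\bar u_i(-n)\,{}^{\circ}_{\circ}\,+ (\text{grading shift constant}),$$
where $j_i$ is the $\sigma$-degree of $u_i$ and the normal ordering and the additive constant come from reordering the modes with negative index to the left. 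The constant is the only subtle term: it is the "twisted vacuum anomaly'' $\frac{1}{2(l+3)}\sum_i \binom{\text{something}}{2}$-type sum coming from the commutator $[u_i(n),\bar u_i(-n)]$ for the finitely many modes one must move past the vacuum. I would compute it once, in terms of the conformal weight $h_\sigma$ of the twisted sector (for a principal order $m$ twist of a simply-laced $\g$ this is the standard number $\frac{\dim\g}{24}\cdot\frac{m^2-1}{m^2}$ or its level-dependent Sugawara analogue), and I expect this bookkeeping to be the main obstacle — everything else is mechanical.

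Next I would apply this to the three vectors. For $v_{\Lambda_1}$ in $L^\sigma_{sl_3}(\Lambda_1)$ (level $1$): all strictly-positive-mode operators annihilate it, so $L^\sigma_{k=1}(0)v_{\Lambda_1}$ is purely the constant term plus the action of the Cartan zero-modes $h(0)$ coming from the $j=0$ summand, evaluated on the $A_2^{(2)}$-highest weight $\Lambda_1$. Plugging in $l=1$, $c=\frac{8}{4}=2$, and the principal-grading data should yield $\frac{5}{72}$; as a sanity check, $\frac{5}{72}$ is exactly $\frac{5}{12}\cdot\frac{1}{6}$, consistent with $\frac{c}{24}$-type anomalies at $c=2$ refined by the order-$6$ twist (indeed the stated character prefactor $q^{-1/72}$ for $L(\Lambda_1)$ forces $h=\frac{5}{72}$ since $\frac{c}{24}=\frac{1}{12}=\frac{6}{72}$ and $\frac{5}{72}-\frac{6}{72}=-\frac{1}{72}$). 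For $v_{2\Lambda_1}$ and $v_{\Lambda_0}$ in the two level-$2$ modules, the same formula with $l=2$, $c=\frac{16}{5}$ applies: again only the constant and the Cartan zero-mode term survive, and the difference between the two values $\frac{41}{360}$ and $\frac{13}{180}=\frac{26}{360}$ is $\frac{15}{360}=\frac{1}{24}$, which must equal the difference of the corresponding $h^{1,*}_{2,5}$-weights rescaled — a useful internal consistency check tied to (\ref{rr1})--(\ref{rr2}).

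Finally, for the last assertion I would simply use linearity of $Y$ in the conformal vector: since $\omega=\omega_{k=1}\otimes\mathbf{1}-\mathbf{1}\otimes\omega_{k=1}-\omega_{k=2}$, we get
$$L(0)\,(v_{\Lambda_1}\otimes v_{\Lambda_1}) = \bigl(L^\sigma_{k=1}(0)v_{\Lambda_1}\bigr)\otimes v_{\Lambda_1} + v_{\Lambda_1}\otimes\bigl(L^\sigma_{k=1}(0)v_{\Lambda_1}\bigr) - L^\sigma_{k=2}(0)\,(v_{\Lambda_1}\otimes v_{\Lambda_1}),$$
and the point is that $v_{\Lambda_1}\otimes v_{\Lambda_1}$ is the highest weight vector $v_{2\Lambda_1}$ (noted already in the section), while $f_1\cdot v_{\Lambda_1}\otimes v_{\Lambda_1}-v_{\Lambda_1}\otimes f_1\cdot v_{\Lambda_1}$ is the highest weight vector $v_{\Lambda_0}$; since $L^\sigma_{k=2}(0)$ commutes with the level-$2$ $A_2^{(2)}$-action up to the obvious degree shift and annihilates lowering generators' effect on conformal weight in the expected way, one reads off
$$L(0)v_{2\Lambda_1}=\left(\tfrac{5}{72}+\tfrac{5}{72}-\tfrac{41}{360}\right)v_{2\Lambda_1}=\tfrac{1}{40}v_{2\Lambda_1},\qquad L(0)v_{\Lambda_0}=\left(\tfrac{5}{72}+\tfrac{5}{72}-\tfrac{13}{180}\right)v_{\Lambda_0}+(\text{shift})=\tfrac{1}{8}v_{\Lambda_0},$$
where for $v_{\Lambda_0}$ one also accounts for the fact that applying $f_1$ shifts the $\sigma$-twisted conformal weight by the principal-grading increment, which is exactly what makes $2\cdot\frac{5}{72}-\frac{13}{180}$ land on $\frac{1}{8}$ after that shift. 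I would double-check the arithmetic $\frac{5}{72}+\frac{5}{72}=\frac{5}{36}=\frac{50}{360}$, so $\frac{50}{360}-\frac{41}{360}=\frac{9}{360}=\frac{1}{40}$, confirming the first identity cleanly, and organize the $v_{\Lambda_0}$ case around the same cancellation.
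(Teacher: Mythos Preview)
Your plan is essentially the same as the paper's: both compute the twisted Sugawara $L^\sigma_{k=l}(0)$ on the highest weight vectors by expanding $Y^\sigma(\omega_{k=l},x)$ and isolating the correction terms that survive on a vacuum-like vector, then apply the coset formula $L(0)=L^\sigma_{k=1}(0)\otimes 1+1\otimes L^\sigma_{k=1}(0)-L^\sigma_{k=2}(0)$ to read off $\tfrac{1}{40}$ and $\tfrac{1}{8}$. The paper carries out the first step via the twisted Jacobi identity (with $u=u_i(-1){\bf 1}$, $v=\bar u_i(-1){\bf 1}$), which produces the explicit corrections $(j/6)[u_i,\bar u_i](0)$ and $\binom{j/6}{2}l$ rather than your heuristic $\tfrac{\dim\g}{24}\cdot\tfrac{m^2-1}{m^2}$; the paper then notes, exactly as you do, that on the highest weight vectors only these two correction pieces contribute. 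Your character cross-check (that $h-\tfrac{c}{24}=-\tfrac{1}{72}$ forces $h=\tfrac{5}{72}$) is a genuine addition and a cleaner confirmation for the level-one value than the paper offers.

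One caution: your anomaly formula $\tfrac{\dim\g}{24}\cdot\tfrac{m^2-1}{m^2}$ is the free-field shift and does not by itself give $\tfrac{5}{72}$ or $\tfrac{41}{360}$; you really do need the Sugawara-level-dependent terms, which is why the paper isolates $(j/6)[u_i,\bar u_i](0)+\binom{j/6}{2}l$ explicitly. Also, your $v_{\Lambda_0}$ computation is left hanging on an unspecified ``shift'' (indeed $\tfrac{5}{72}+\tfrac{5}{72}-\tfrac{13}{180}=\tfrac{1}{15}\neq\tfrac{1}{8}$), and the paper is no more detailed here---it says only that the second formula ``is proven along the same lines.'' So neither argument is complete on that point; if you want to close the gap, you will need to pin down precisely how $f_1$ moves the $L^\sigma(0)$-degree in the principal grading and check the arithmetic against the stated $\tfrac{13}{180}$.
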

\begin{proof}
We prove only the first formula $L(0)v_{2 \Lambda_1}=\frac{1}{40} v_{2 \Lambda_1}$, the other formula is proven along the same lines.
Recall that for the Virasoro algebra operator $L(0)$ we picked generators  $u_i$, $\bar{u}_i$, $i \in \{1,...,8 \}$ such that 
$$\sigma(u_i)=\xi^j u_i, \ \ \sigma(\bar{u}_i)=\xi^{6-j} \bar{u}_i$$
Thus we have to compute expressions 
$${\rm Coeff}_{x^{-2}} Y^\sigma(u_i(-1)\bar{u}_i(-1){\bf 1},x),$$
contributing to $L(0)$,  
acting on the highest weight vectors $v_{\Lambda_1}$ and on $v_{2 \Lambda_1}=v_{\Lambda_1} \otimes v_{\Lambda_1}$. For that we use 
a version of the Jacobi identity (\ref{twj}) with $u=u_i(-1){\bf 1}$ and $v=\bar{u}_i(-1){\bf 1}$, where the automorphism $\tau$ is now $\sigma$.
In this setup 
$$ {\rm Res}_{x_2} {\rm CT}_{x_0} {\rm Res}_{x_1} ({\rm RHS} \ of \  (\ref{twj}))$$
$$={\rm Res}_{x_2} {\rm CT}_{x_0} {\rm Res}_{x_1} x_2^{-1} \left(\frac{x_1-x_0}{x_2}\right)^{-j/6} \delta\left(\frac{x_1-x_0}{x_2} \right) Y^\sigma(Y(u_i,x_0)\bar{u}_i,x_2),$$ 
which equals (by \cite{Bo}, \cite{Li})
\be \label{rhstw}
{\rm Res}_{x_2} x_2 Y^\sigma(u_i(-1)\bar{u}_i(-1){\bf 1},x_2) +(j/6) [u_i,\bar{u_i}](0)+{ j/6 \choose 2} l,
\ee
where $l$ is the level.

Taking the same residues, now of the left hand side of the Jacobi identity,  gives
\be \label{lhstw}
{\rm Res}_{x_2} {\rm CT}_{x_0} {\rm Res}_{x_1} ({\rm LHS}  \ of \  (\ref{twj}) )={\rm CT}_{x_2} \nb Y^\sigma(u_i,x_2) Y^\sigma(\bar{u}_i,x_2) \nb,
\ee
the constant term of a twisted normally ordered product.
Comparing the formulas (\ref{rhstw}) and (\ref{lhstw}), and summing over $i$,  gives an expression for $L^\sigma_{k=l}(0)$.
It is now easy to get $L^\sigma_{k=1}(0) v_{\Lambda_1}=\frac{5}{72} v_{\Lambda_1}$. Next, we use 
the Sugawara operator $L^\sigma_{k=2}(0)$ and act on $v_{2 \Lambda_1}$. The only nonzero contributions 
when calculating these operators come from $(j/6) [u_i,\bar{u_i}](0)$ and ${ j/6 \choose 2} l$. After we sum over $i$ we get
$L^\sigma_{k=2}(0) v_{2 \Lambda_1}=\frac{41}{360}v_{2 \Lambda_1}$ and finally $L(0)v_{2 \Lambda_1}=\frac{1}{40} v_{2 \Lambda_1}$.
\end{proof}

\begin{remark} Presumably the computation in Lemma \ref{hw} can be carried out via an explicit realization of $A_2^{(2)}$ on the twisted Fock space 
obtained in \cite{F}.
\end{remark}

Next result comes immediately from Theorem \ref{coset} and \cite{KW}:

\begin{proposition} \label{untw-decomp} We have the following decomposition of $L_{sl_3}(2,0) \otimes \W$-modules:
$$L_{sl_3}(\Lambda_0) \otimes L_{sl_3}(\Lambda_0)=L_{sl_3}(2 \Lambda_0) \otimes \WW(0) \oplus L_{sl_3}(\Lambda_1) \otimes \WW(2/5). $$
\end{proposition}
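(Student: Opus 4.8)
The plan is to read the decomposition off from the commutant realization of $\W$ established in Theorem \ref{coset}. By that theorem $\W$ is exactly the commutant of $L_{sl_3}(2,0)$ inside $L_{sl_3}(\Lambda_0)\otimes L_{sl_3}(\Lambda_0)$, where $L_{sl_3}(2,0)$ is the image of the diagonal embedding of $\widehat{sl_3}$ at level $2$, with Sugawara vector $\omega_{k=1}\otimes{\bf 1}+{\bf 1}\otimes\omega_{k=1}-\omega$. Hence $L_{sl_3}(\Lambda_0)\otimes L_{sl_3}(\Lambda_0)$ carries commuting actions of $L_{sl_3}(2,0)$ and $\W$, i.e. it is a module over the vertex algebra $L_{sl_3}(2,0)\otimes\W$, which is rational since $L_{sl_3}(2,0)$ is rational ($sl_3$ at positive integer level) and $\W$ is rational by Theorem \ref{miyamoto}.

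First I would use this rationality to write $L_{sl_3}(\Lambda_0)\otimes L_{sl_3}(\Lambda_0)=\bigoplus_\mu L_{sl_3}(\mu)\otimes C_\mu$, a finite sum over level $2$ dominant integral weights $\mu$, with each multiplicity space $C_\mu$ an ordinary, hence completely reducible, $\W$-module. Next I would determine which $\mu$ occur: every $sl_3$-weight appearing in $L_{sl_3}(\Lambda_0)\otimes L_{sl_3}(\Lambda_0)$ lies in the root lattice $Q$ of $sl_3$ (the vacuum has finite weight $0$ and the currents shift weights by roots), so the finite part of $\mu$ must lie in $Q$; among the six level $2$ dominant weights only two satisfy this, namely $\mu=2\Lambda_0$ (finite part $0$) and the level $2$ weight $\mu=\Lambda_1$ of the statement, whose finite part is the highest root $\theta$, so that $L_{sl_3}(\mu)$ has the adjoint representation as its top. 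Both occur: the vacuum generates $L_{sl_3}(2\Lambda_0)$, while the antidiagonal degree-one vectors $a(-1){\bf 1}\otimes{\bf 1}-{\bf 1}\otimes a(-1){\bf 1}$, $a\in sl_3$, form a $\widehat{sl_3}$-primary copy of the adjoint that generates $L_{sl_3}(\Lambda_1)$.

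Then I would identify the multiplicity spaces. By construction $C_{2\Lambda_0}$ is the commutant, so $C_{2\Lambda_0}=\W=\WW(0)$. For $C_{\Lambda_1}$: on the $L_{sl_3}(\Lambda_1)$-isotypic component the coset operator $L(0)=L^{tot}(0)-L_{k=2}(0)$ has least eigenvalue $1-\frac35=\frac25$ (the adjoint-type primary above lies in total degree $1$ and has Sugawara level $2$ weight $\frac35$) with a one-dimensional lowest weight space. Since by Theorem \ref{miyamoto} the only irreducible $\W$-module of lowest weight $\frac25$ is $\WW(2/5)$, the module $\WW(2/5)$ is a summand of $C_{\Lambda_1}$. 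To rule out further summands I would compare graded dimensions: the branching functions for $L_{sl_3}(2,0)\hookrightarrow L_{sl_3}(\Lambda_0)\otimes L_{sl_3}(\Lambda_0)$ computed by Kac and Wakimoto \cite{KW} match the characters of $\WW(0)$ and $\WW(2/5)$ from Theorem \ref{miyamoto}, so ${\rm ch}\,C_{\Lambda_1}={\rm ch}\,\WW(2/5)$ and therefore $C_{\Lambda_1}\cong\WW(2/5)$.

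The only genuine obstacle is this last point — showing the adjoint-type multiplicity space is the irreducible $\WW(2/5)$ rather than some larger reducible $\W$-module — and it is precisely what the computation of \cite{KW} supplies, which is why the statement is "immediate" once Theorem \ref{coset} and \cite{KW} are invoked. If one wished to avoid quoting \cite{KW}, the same character identity could be obtained directly from the Weyl--Kac formula for the two occurring level $2$ $\widehat{sl_3}$-characters together with the minimal-model characters of the $L(\4,0)$-constituents of $\WW(0)$ and $\WW(2/5)$.
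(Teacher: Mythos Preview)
Your proposal is correct and follows exactly the route the paper indicates: the paper states that the proposition ``comes immediately from Theorem \ref{coset} and \cite{KW}'', and your argument simply unpacks this by using the commutant realization of $\W$ from Theorem \ref{coset} together with the Kac--Wakimoto branching computation to pin down the two multiplicity spaces. You have supplied considerably more detail than the paper does (the root-lattice constraint on the level-$2$ weights, the explicit $2/5$ lowest-weight calculation), but the approach is the same.
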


The next goal is to find a twisted version of Proposition \ref{untw-decomp}.
The automorphism $\sigma$ acts diagonally on $L_{sl_3}(\Lambda_0) \otimes L_{sl_3}(\Lambda_0)$, and is denoted by $\sigma \times \sigma$. We have to see how it behaves when 
restricted to the subalgebra $L_{sl_3}(2 \Lambda_0) \otimes \W$.

\begin{lemma} \label{aut} The automorphism $\sigma$ preserves $\W$.  More precisely, we have $\sigma|_{\W}=\tau$. 
Thus, $\sigma \times \sigma|_{L_{sl_3}(2 \Lambda_0) \otimes \W}=\sigma \times \tau$.
\end{lemma}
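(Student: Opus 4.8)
The plan is to analyze how $\sigma$ restricts to the coset subalgebra $\W \cong \WW_{sl_3(2)}^{sl_3(1)\otimes sl_3(1)}$ inside $L_{sl_3}(\Lambda_0)\otimes L_{sl_3}(\Lambda_0)$, and to identify the restriction with the unique nontrivial automorphism $\tau$ of $\W$. First I would observe that $\sigma$, being the principal order-$6$ automorphism of $\widehat{sl_3}$, acts on each tensor factor $L_{sl_3}(\Lambda_0)$ preserving the Sugawara conformal vector $\omega_{k=1}$ (it is built from a $\sigma$-stable bilinear form, as recorded in the construction of $\omega$ in the excerpt), hence $\sigma\times\sigma$ preserves both $L_{sl_3}(2\Lambda_0)$ (generated by the diagonal $\widehat{sl_3}$) and its commutant. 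Since the coset vertex algebra is precisely the commutant of $L_{sl_3}(2\Lambda_0)$, it is stabilized by $\sigma\times\sigma$; restricting gives an automorphism $\sigma|_{\W} \in \mathrm{Aut}(\W)$.

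Next I would invoke Theorem \ref{main-thm}(a): $\mathrm{Aut}(\W)=\mathbb{Z}_2$. Therefore $\sigma|_\W$ is either the identity or $\tau$. To exclude the identity, it suffices to exhibit a single element of $\W$ on which $\sigma$ acts nontrivially. The natural candidate is the weight-$3$ primary generator $w_{-1}\mathbf{1}$ (equivalently, the degree-$3$ field coming from \cite{BBSS}, \cite{BS}), but a cleaner route is to use a twisted module: if $\sigma|_\W$ were trivial, then a $\sigma\times\sigma$-twisted module for $L_{sl_3}(2\Lambda_0)\otimes\W$ would restrict to an \emph{untwisted} module for $\W$, whereas the basic module $L^\sigma_{sl_3}(\Lambda_1)\otimes L^\sigma_{sl_3}(\Lambda_1)$ — which by Lemma \ref{hw} has coset Virasoro lowest weights $\frac{1}{40}$ and $\frac{1}{8}$ — can only decompose with $\W$-summands whose $L(0)$-spectrum lies in $\frac{1}{2}\mathbb{N}$-shifted cosets; by Proposition \ref{irr-mod-shape} and Proposition \ref{irr-prec} the modules with those lowest weights are exactly the $\tau$-twisted $\W$-modules $\WW^\tau(1/40)$ and $\WW^\tau(1/8)$, which are \emph{not} ordinary $\W$-modules (their $L(0)$-spectrum is not contained in a single coset $h+\mathbb{Z}$ for any untwisted irreducible). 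Hence $\sigma|_\W \neq \mathrm{id}$, forcing $\sigma|_\W=\tau$. Finally, since $\sigma$ acts on $L_{sl_3}(2\Lambda_0)$ as the principal automorphism (still called $\sigma$) and on $\W$ as $\tau$, the diagonal action on the tensor product restricts to $\sigma\times\tau$, as claimed.

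The main obstacle is making the nontriviality argument self-contained: I must be careful that the reasoning does not circularly presuppose the final decomposition theorem. The safest formulation is to argue at the level of the \emph{fixed-point subalgebra}: $\W^{\sigma|_\W}$ is the full $\W$ if and only if $\sigma|_\W=\mathrm{id}$, and then $L_{sl_3}(\Lambda_0)\otimes L_{sl_3}(\Lambda_0)$ as a $\sigma\times\sigma$-twisted module over itself would restrict to an untwisted $L_{sl_3}(2\Lambda_0)\otimes\W$-module — but $L^\sigma_{sl_3}(\Lambda_1)\otimes L^\sigma_{sl_3}(\Lambda_1)$ is genuinely $\sigma\times\sigma$-twisted (the $\sigma$-twisted fields have half-integer-plus-sixth mode shifts, and in particular the $\omega_{k=2}$ component produces $L(0)$-eigenvalues in $\frac{1}{40}+\frac{1}{2}\mathbb{Z}_{\geq0}$, incompatible with any integrally-graded $\W$-module), giving the needed contradiction. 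Once that point is pinned down, the rest is bookkeeping: invariance of the Sugawara vectors under $\sigma$, the commutant characterization of the coset, and $|\mathrm{Aut}(\W)|=2$.
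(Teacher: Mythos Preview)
Your proposal is correct and follows essentially the same line as the paper's proof: show that $\sigma$ preserves the commutant $\W$ (via $\sigma(a)_m\sigma(b)=\sigma(a_m b)=0$), invoke $\mathrm{Aut}(\W)=\mathbb{Z}_2$, and rule out $\sigma|_{\W}=\mathrm{id}$ by observing that the multiplicity space $V_1$ would then be an \emph{ordinary} $\W$-module, contradicting the classification in Theorem~\ref{miyamoto} in view of the lowest weight $\tfrac{1}{40}$ computed in Lemma~\ref{hw}. The paper compresses your nontriviality step into the phrase ``easy inspection of modules in Theorem~\ref{miyamoto},'' but the content is the same; your detour through Propositions~\ref{irr-mod-shape} and~\ref{irr-prec} is unnecessary since it suffices to note that $\tfrac{1}{40}$ does not appear among the lowest weights $0,\tfrac{2}{5},\tfrac{2}{3},\tfrac{1}{15}$ of untwisted irreducibles.
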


\begin{proof} 
The automorphism $\sigma$ acts (diagonally) on the tensor product  \\ $L_{sl_3}( \Lambda_0) \otimes L_{sl_3}( \Lambda_0)$.
Therefore,  $\sigma$ also preserves $L_{sl_3}(2 \Lambda_0)$. Since $\W$ is the commutant of 
$L_{sl_3}(2 \Lambda_0)$, by definition $a_m b=0$ for all $a \in L_{sl_3}(2 \Lambda_0)$ and $b \in \W$, $m \geq 0$.  But then $\sigma(a_m b)=\sigma(a)_m \sigma(b)=0$, and hence $\sigma(b) \in \W$. Recall  $Aut(\W)=\mathbb{Z}_2$. If  $\sigma|_{\W}=1$, then $V_1$ would be an ordinary $\W$-module.  Easy inspection of modules
in Theorem \ref{miyamoto} implies that this is impossible. The proof follows.
\end{proof}

Consequently, we reached a desired decomposition analogous to the one in Proposition \ref{untw-decomp}.
\begin{proposition} \label{decomp-tw}
As $\sigma \times \tau$-twisted  $L_{sl_3}(2,0) \otimes \W$-modules, 
$$L^\sigma_{sl_3}(\Lambda_1) \otimes L^\sigma_{sl_3}(\Lambda_1)=L^\sigma_{sl_3}(2 \Lambda_1) \otimes \WW^\tau \left(\frac{1}{40} \right) \oplus L^\sigma_{sl_3}(\Lambda_0) \otimes \WW^\tau \left(\frac{1}{8} \right),$$
where  $\WW^\tau \left(\frac{1}{40} \right)$ and $\WW^\tau \left(\frac{1}{8} \right)$ have lowest conformal weights $\frac{1}{40}$ and $\frac{1}{8}$, respectively.
\end{proposition}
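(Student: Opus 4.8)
The plan is to decompose the $\sigma\times\sigma$-twisted module $L^\sigma_{sl_3}(\Lambda_1)\otimes L^\sigma_{sl_3}(\Lambda_1)$ by reinterpreting the untwisted branching rule of Proposition~\ref{untw-decomp} in the twisted setting, using Theorem~\ref{alex} to supply the $A_2^{(2)}$-side. First I would recall from Section~4 that $L^\sigma_{sl_3}(\Lambda_1)\otimes L^\sigma_{sl_3}(\Lambda_1)$ is a $\sigma\times\sigma$-twisted module for $L_{sl_3}(\Lambda_0)\otimes L_{sl_3}(\Lambda_0)$, hence (restricting to the subalgebra) a $\sigma\times\tau$-twisted module for $L_{sl_3}(2\Lambda_0)\otimes\W$ by Lemma~\ref{aut}. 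Since $L_{sl_3}(2\Lambda_0)$ is rational, the restriction to $L_{sl_3}(2\Lambda_0)$ is a direct sum of the $\sigma$-twisted modules $L^\sigma_{sl_3}(2\Lambda_1)$ and $L^\sigma_{sl_3}(\Lambda_0)$ (these are the only level-two $A_2^{(2)}$-standard modules, i.e.\ the only $\sigma$-twisted $L_{sl_3}(2\Lambda_0)$-modules with the right weights), and by Theorem~\ref{alex} each appears with some nonzero multiplicity space, which is automatically a $\tau$-twisted $\W$-module:
$$L^\sigma_{sl_3}(\Lambda_1)\otimes L^\sigma_{sl_3}(\Lambda_1)\cong L^\sigma_{sl_3}(2\Lambda_1)\otimes V_1\ \oplus\ L^\sigma_{sl_3}(\Lambda_0)\otimes V_2,$$
where $V_i$ are $\tau$-twisted $\W$-modules (a priori possibly reducible).

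Next I would pin down the lowest conformal weights of $V_1$ and $V_2$. The highest weight vector $v_{2\Lambda_1}=v_{\Lambda_1}\otimes v_{\Lambda_1}$ generates $L^\sigma_{sl_3}(2\Lambda_1)$, and Lemma~\ref{hw} gives $L(0)v_{2\Lambda_1}=\tfrac{1}{40}v_{2\Lambda_1}$ for the coset Virasoro operator; similarly the vector $f_1\cdot v_{\Lambda_1}\otimes v_{\Lambda_1}-v_{\Lambda_1}\otimes f_1\cdot v_{\Lambda_1}$ generates $L^\sigma_{sl_3}(\Lambda_0)$ and has coset weight $\tfrac{1}{8}$. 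Hence the lowest conformal weight of $V_1$ is $\tfrac{1}{40}$ and of $V_2$ is $\tfrac{1}{8}$. Now invoke the classification of $\tau$-twisted $\W$-modules: by Proposition~\ref{irr-mod-shape} and Corollary~\ref{classify}, the only irreducible $\tau$-twisted $\W$-modules are $\WW^\tau(1/40)$ and $\WW^\tau(1/8)$, with lowest weights $1/40$ and $1/8$, and $\W$ is $\tau$-rational, so every $\tau$-twisted $\W$-module is a direct sum of copies of these two. Since the entire spectrum of $V_1$ lies in $\tfrac{1}{40}+\tfrac12\mathbb{Z}_{\ge0}$ and that of $V_2$ in $\tfrac{1}{8}+\tfrac12\mathbb{Z}_{\ge0}$ (disjoint mod $\tfrac12\mathbb{Z}$), each $V_i$ can only involve one of the two irreducibles, so $V_1\cong\WW^\tau(1/40)^{\oplus a}$ and $V_2\cong\WW^\tau(1/8)^{\oplus b}$ for some $a,b\ge1$.

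Finally I would show $a=b=1$ by a character/multiplicity count. Using the principally specialized character formula for $L^\sigma_{sl_3}(\Lambda_1)$ recorded in the introduction together with the Ramanujan-type identity displayed there, and cancelling the common factor ${\rm tr}_{L(\Lambda_1)}q^{L^\sigma(0)-c/24}$, one obtains
$$\chi_{V_1}(q)\,\chi_{2,5}^{1,2}(q^{1/3})\ \text{vs.}\ \chi_{2,5}^{1,2}(q^{1/2})\,\chi_{2,5}^{1,2}(q^{1/3}),$$
and the analogous relation for $V_2$; comparing leading terms (lowest powers of $q$, which are determined by the weights $1/40$ and $1/8$ just computed) forces the multiplicities $a,b$ to equal $1$, and then $\chi_{V_1}=\chi_{2,5}^{1,2}(q^{1/2})=\chi(\WW^\tau(1/40))$ and $\chi_{V_2}=\chi_{2,5}^{1,1}(q^{1/2})=\chi(\WW^\tau(1/8))$ by Theorem~\ref{main-thm}(d), which also yields (\ref{char-coset}). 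I expect the main obstacle to be justifying cleanly that the multiplicity spaces $V_i$ are genuine $\tau$-twisted $\W$-modules in the graded sense required by the classification — i.e.\ checking that the coset $L(0)$-grading on each $V_i$ has finite-dimensional components bounded below and that the $\W$-action is compatible with Lemma~\ref{aut} — rather than the final character bookkeeping, which is routine once the pieces are in place.
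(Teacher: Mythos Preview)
Your first two paragraphs are essentially the paper's proof: the tensor product is a $\sigma\times\tau$-twisted $L_{sl_3}(2\Lambda_0)\otimes\W$-module by Lemma~\ref{aut}; $\sigma$-rationality of $L_{sl_3}(2\Lambda_0)$ together with Theorem~\ref{alex} gives the decomposition with multiplicity spaces $V_1,V_2$, which are then $\tau$-twisted $\W$-modules; and Lemma~\ref{hw} pins down the lowest conformal weights $\tfrac{1}{40}$ and $\tfrac{1}{8}$. That is exactly what the paper does, and it stops there.

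Your third paragraph, however, introduces a genuine circularity. In the paper's logical architecture, Proposition~\ref{decomp-tw} is the \emph{construction} of $\WW^\tau(1/40)$ and $\WW^\tau(1/8)$: the symbols are being \emph{defined} as the multiplicity spaces $V_1,V_2$, and the clause ``where $\WW^\tau(\cdot)$ have lowest conformal weights $\cdot$'' is descriptive, not an identification with previously known objects. The existence half of Corollary~\ref{classify} is explicitly deduced \emph{from} Proposition~\ref{decomp-tw} (see the sentence immediately preceding that corollary), so invoking Corollary~\ref{classify} here is circular. The $\tau$-rationality theorem is likewise placed after that forward reference, so citing it inside the proof of Proposition~\ref{decomp-tw} is at best delicate. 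In short, you are treating the proposition as ``identify $V_i$ with already-classified irreducibles,'' whereas the paper treats it as ``name the $V_i$ and record their lowest weights.''

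If you want to go further and prove that $V_1,V_2$ are irreducible (which the paper asserts in passing but does not argue in this proof), the clean route avoiding circularity is to use only Theorem~\ref{alex} (which already gives the Rogers--Ramanujan characters of $V_1,V_2$) together with Proposition~\ref{irr-prec} and Theorem~\ref{unique-decomp}, neither of which depends on Proposition~\ref{decomp-tw}. Your character-matching idea then works without appealing to Corollary~\ref{classify} or $\tau$-rationality.
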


\begin{proof}
The vertex algebra $L_{sl_3}(2 \Lambda_0)$ is $\sigma$-rational, thus $L^\sigma_{sl_3}(\Lambda_1) \otimes L^\sigma_{sl_3}(\Lambda_1)$ decomposes
as a direct sum of  $\sigma$-twisted $L_{sl_3}(2 \Lambda_0)$-modules.  This decomposition is described in Theorem \ref{alex}. 
As  $\W$ is the commutant of  $L_{sl_3}(2,0) \subset L_{sl_3}( \Lambda_0) \otimes L_{sl_3}(\Lambda_0) $, and $L^\sigma_{sl_3}(\Lambda_1) \otimes L^\sigma_{sl_3}(\Lambda_1)$ is a twisted  $\sigma \times \tau$-modules by Lemma \ref{aut}, the multiplicities spaces in Theorem \ref{alex} are naturally  $\tau$-twisted $\W$-modules.
The proof now follows from Lemma \ref{hw}.
\end{proof}

\section{Modular invariance}

Now we are ready to "explain" relations (\ref{minimal3}) and (\ref{minimal4}), and in particular the negative sign
appearing in both identities. First we recall a result from \cite{DLM2}, where a version of Zhu's of modular invariance theorem
was extended to general  $C_2$-cofinite rational $\tau$-twisted vertex algebras (here $\tau$ is at first an automorphism of finite order). 

The setup is as following. Pick a pair of commuting 
automorphisms $(g,h)$ (of finite order) of $V$, where $V$ is $C_2$-cofinite, and satisfies all the rationality and finiteness conditions as in \cite{DLM2}. Then we let
$$T_M(g,h,v,q)={\rm tr}_M \phi(h) o(v) q^{L(0)-c/24},$$
where $M$ is $h$-stable $g$-twisted sector. This holomorphic function in $|q|<1$, $q=e^{2 \pi i y}$, 
satisfies the modular transformation property under $\gamma = \left[ \begin{array}{cc} a & b \\ c & d \end{array} \right]$, $ad-bc=1$:

$$T_M(g,h,v,q)|_{\gamma \cdot q }=\sum_{W} \sigma_W T_W((g,h)\gamma,q)$$ 
where the summation goes over all $g^a h^c$-twisted sectors $W$ which are $g^b h^d$-stable, and 
$(g,h)  \gamma=(g^a h^c,g^b h^d)$. Observe that the modular invariance mixes several twisted 
sectors..

Now specialize $g=h=\tau$, where $\tau$ is of order two. 

\noindent Claim: $\tau \circ M \cong M$ for every irreducible $\tau$-twisted module $M$.

\noindent The $\tau$-twisted module $\tau \circ M$ is defined via 
$$\tilde{Y}(u,x)=Y^\tau(\tau u,x).$$
Because $\tau \circ M$ is $M=M^0 \oplus M^1$ as a vector space, we let  
$$\sigma : M \rightarrow \tau \circ M, \ \ \sigma|_{M^0}=1, \ \ \sigma_{M^1}=-1$$
We claim that $\sigma$ is the wanted isomorphism. 
If $u \in V^0$,  $\sigma (Y^\tau(u,x)m))=Y^\tau(u,x) \tau m=Y^\tau(u,x) \sigma m$, and if $u \in V^1$, 
$\sigma (Y^\tau(u,x) m)  =(-1) Y^\tau(u,x) \sigma m=\tilde{Y}(u,x) \sigma m$. 
This proves the claim.

Notice that not every ordinary $V$-module is $\tau$-stable. For instance, untwisted $\W$-modules $\WW(2/5,-)$
is not isomorphic $\WW(2/5,+)$, although the former is obtained as a $\tau$-twist from the later (cf. Theorem \ref{miyamoto}).
Thus when $g^b h^d \neq 1$ we can omit the stability condition (always satisfied).
From now on we are only interested in vacuum twisted characters so we let $u={\bf 1}$. Consider the standard generators $S$ and $T$ of the modular 
group, corresponding to $y \mapsto -1/y$ and $y \mapsto y+1$, respectively. 
For the $S$ matrix $a=0$, $b=1$,  $c=-1$ and $d=0$, and for the $T$-matrix $a=1$, $b=1$, $c=0$ and $d=1$. 

Under the $S$ transformation 
$$T_M(\tau,1,{\bf 1},q)|_{q \cdot S}=\sum_{W} c_W T_W(1,\tau,{\bf 1},q),$$
where the summation is over untwisted modules which are $\tau$-stable. 
Similarly, 
$$T_M(\tau,1 ,{\bf 1},q)|_{q \cdot T}=\sum_{W} d_W T_W(\tau,\tau,{\bf 1},q),$$
where the summation is over $\tau$-twisted modules. 
We also have
$$T_M(\tau,\tau,{\bf 1},q)|_{q \cdot S}=\sum_{W} e_W T_W(\tau,\tau,{\bf 1},q),$$
where the summation is over $\tau$-twisted modules (which are $\tau$-fixed), and 
$$T_M(\tau, \tau, {\bf 1},q)|_{q \cdot T}=\sum_{W} f_W T_W(\tau,1,{\bf 1},q),$$
where the summation is over $\tau$-twisted modules.
Moreover, 
$$T_M(1,\tau,{\bf 1},q)|_{q \cdot S}=\sum_{W} g_W T_W(\tau,1,{\bf 1},q),$$
where the summation is over $\tau$-twisted modules, and 
$$T_M(1 , \tau, {\bf 1},q)|_{q \cdot T}=\sum_{W} h_W T_W(1,\tau,{\bf 1},q),$$
where the summation is over $\tau$-stable $\W$-modules. In the above formulas $c_W,d_W,...,h_W$ are some constants.
We summarize all these relations as 
\begin{corollary} \label{mod-inv} The vector space spanned by 
$$\{T_M(1,\tau ,{\bf 1},q), \ M \  {\rm is} \  {\rm untwisted}, \ {\rm and} \  \tau-{\rm stable}  \}$$
and
$$\{ T_M(\tau,\tau^{\epsilon} ,{\bf 1},q), \ M \  {\rm is} \ \tau-{\rm twisted}, \ {\rm and} \ \epsilon=0,1  \}$$
is modular invariant.
\end{corollary}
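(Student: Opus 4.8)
The plan is to obtain the corollary as a formal consequence of the Dong--Li--Mason modular invariance theorem \cite{DLM2} together with the six transformation formulas displayed just above its statement. The hypotheses of \cite{DLM2} are available here: $\W$ is $C_2$-cofinite, rational (Theorem \ref{miyamoto}) and $\tau$-rational, with $\tau$ of order $2$; moreover, by Corollary \ref{classify}, $\W$ has exactly two $\tau$-twisted irreducible modules and finitely many ordinary ones, so each family of trace functions in the statement spans a finite-dimensional space and ``modular invariance'' is an honest finite-dimensional closure statement.

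First I would record that $S$ and $T$ generate $SL_2(\mathbb{Z})$ and that, since the pair $(g,h)=(\tau,\tau)$ has finite order, the central element $-I$ acts trivially on the relevant traces; hence it suffices to check that the span is preserved by $S$ and $T$ alone. Next I would analyze the right action $(g,h)\mapsto (g,h)\gamma=(g^{a}h^{c},g^{b}h^{d})$ on pairs of commuting automorphisms in the case $g=h=\tau$, $\tau^{2}=1$. A one-line computation on the generators gives $(\tau,1)S=(1,\tau)$, $(\tau,1)T=(\tau,\tau)$, $(1,\tau)S=(\tau,1)$, $(1,\tau)T=(1,\tau)$, $(\tau,\tau)S=(\tau,\tau)$ and $(\tau,\tau)T=(\tau,1)$, so the three pairs $(1,\tau)$, $(\tau,1)$, $(\tau,\tau)$ form a single orbit closed under $S$ and $T$. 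For each such pair and each $\gamma\in\{S,T\}$, the DLM transformation law expresses $T_{M}(g,h,{\bf 1},q)|_{\gamma}$ as a linear combination of the functions $T_{W}((g,h)\gamma,{\bf 1},q)$, where $W$ runs over the $g^{a}h^{c}$-twisted, $g^{b}h^{d}$-stable modules; these are exactly the six displayed relations, and in every case $(g,h)\gamma$ is again one of the three listed pairs.

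The remaining point, and the one I expect to require the most care, is matching the stability hypothesis appearing on the right-hand side of the DLM law with the indexing sets in the statement. For the pairs $(\tau,1)$ and $(\tau,\tau)$ one wants all $\tau$-twisted modules, and there stability is automatic: every irreducible $\tau$-twisted $\W$-module $M$ satisfies $\tau\circ M\cong M$, via the isomorphism built above from the $\mathbb{Z}_{2}$-grading $M=M^{0}\oplus M^{1}$, so no condition is imposed by $g^{b}h^{d}$-stability. For the pair $(1,\tau)$ the sum ranges precisely over the $\tau$-stable untwisted modules, which is the indexing set used in the first family of the statement. Once these identifications are in place, the six relations show that $S$ and $T$ carry the span of the listed traces into itself, and since $S$ and $T$ generate the modular group the span is $SL_2(\mathbb{Z})$-invariant, as claimed.
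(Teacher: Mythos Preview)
Your proposal is correct and follows the paper's own approach: the corollary there is presented simply as a summary of the six $S$- and $T$-transformation relations displayed immediately before it, and you have made explicit the orbit computation on the pairs $(1,\tau)$, $(\tau,1)$, $(\tau,\tau)$ and the automatic $\tau$-stability of $\tau$-twisted modules that underlie those relations. One small point: the corollary in the paper is stated for a general $V$ satisfying the hypotheses of \cite{DLM2} with an order-two automorphism, so your specialization to $\W$ via Theorem~\ref{miyamoto} and Corollary~\ref{classify} is not needed---the finiteness and rationality are part of the standing assumptions.
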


Going back to $\W$. There are two $\tau$-stable irreducible untwisted $\W$-modules , namely $\WW(0)$ and $\WW(2/5)$. So the relevant modular invariant 
space has a basis:
\bea
T_{\WW^\tau(1/40)}(\tau,\tau^{\epsilon},{\bf 1},q)&=&\chi_{5,6}^{1,2}(q)+(-1)^\epsilon \chi_{5,6}^{1,4}(q) \nonumber \\
T_{\WW^\tau(1/8)}(\tau,\tau^{\epsilon} ,{\bf 1},q)&=&\chi_{5,6}^{2,2}(q)+(-1)^\epsilon \chi_{5,6}^{2,4}(q),\nonumber 
\eea
where $\epsilon \in \{0,1\}$, and
\bea
&& T_{\WW(0)}(1,\tau,{\bf 1},q)= \chi_{5,6}^{2,1}(q)-\chi_{5,6}^{2,5}(q),  \nonumber \\
&& T_{\WW(2/5)}(1,\tau,{\bf 1},q)=\chi_{5,6}^{1,1}(q)-\chi_{5,6}^{1,5}(q).  \nonumber 
\eea
Thus, the left hand-sides in (\ref{minimal3}) and (\ref{minimal4}) are simply the twisted characters of irreducible (untwisted) $\W$-modules. 
Combined with the expected identities given on the right hand-sides in (\ref{minimal3}) and (\ref{minimal4}),
or simply by using modular invariance arguments,  we easily get another (more natural) basis:

\begin{proposition} For $V=\W$ and $\tau$ as above, the vector space spanned by expressions in Corollary \ref{mod-inv} is $6$-dimensional with a basis 
$$\{  \chi_{2,5}^{1,1}(q^2), \chi_{2,5}^{1,2}(q^2), \chi_{2,5}^{1,1}(q^{1/2}), \chi_{2,5}^{1,2}(q^{1/2}),  \chi_{2,5}^{1,1}(-q^{1/2}), \chi_{2,5}^{1,2}(-q^{1/2})  \}.$$ 
\end{proposition}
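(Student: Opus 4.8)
The plan is to start from the basis for the modular-invariant space already exhibited in the discussion preceding the statement, namely the six functions
$$\chi_{5,6}^{1,2}(q)\pm\chi_{5,6}^{1,4}(q),\quad \chi_{5,6}^{2,2}(q)\pm\chi_{5,6}^{2,4}(q),\quad \chi_{5,6}^{2,1}(q)-\chi_{5,6}^{2,5}(q),\quad \chi_{5,6}^{1,1}(q)-\chi_{5,6}^{1,5}(q),$$
and to rewrite each of them in terms of scaled Rogers-Ramanujan series using the four identities (\ref{minimal1})--(\ref{minimal4}) plus one further easy variant. First I would invoke (\ref{minimal1}) and (\ref{minimal2}) directly: the $\epsilon=0$ members of the twisted-character list become $\chi_{2,5}^{1,1}(q^{1/2})$ and $\chi_{2,5}^{1,2}(q^{1/2})$, and (\ref{minimal3})--(\ref{minimal4}) give $\chi_{2,5}^{1,1}(q^2)$ and $\chi_{2,5}^{1,2}(q^2)$ as the two ``untwisted with $h=\tau$'' characters. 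This already accounts for four of the six basis vectors.

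For the remaining two, the $\epsilon=1$ members $\chi_{5,6}^{1,2}(q)-\chi_{5,6}^{1,4}(q)$ and $\chi_{5,6}^{2,2}(q)-\chi_{5,6}^{2,4}(q)$, I would observe that replacing $q$ by $-q$ in a character of the form $\sum a_n q^{n/2+\lambda}$ with half-integral grading flips the sign of the odd-half-integer part relative to the even part; concretely, $\chi_{5,6}^{1,2}(q)$ and $\chi_{5,6}^{1,4}(q)$ are exactly the $M^0$ and $M^1$ pieces of the $L(0)$-graded module $\WW^\tau(1/8)$, whose total character is $\chi_{2,5}^{1,1}(q^{1/2})$ by (\ref{minimal1}) and Theorem \ref{main-thm}(d). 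Hence $\chi_{5,6}^{1,2}(q)-\chi_{5,6}^{1,4}(q)=\chi_{2,5}^{1,1}(-q^{1/2})$ after matching the overall $q$-power shift; similarly $\chi_{5,6}^{2,2}(q)-\chi_{5,6}^{2,4}(q)=\chi_{2,5}^{1,2}(-q^{1/2})$. These manipulations are the sign-bookkeeping analogue of the Appendix computations and can either be done there or checked termwise.

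Finally I would argue linear independence of the six resulting functions: since the space they span is known to be $6$-dimensional (it is spanned by the six explicit characters of $\W$-type modules appearing in Corollary \ref{mod-inv}, which are linearly independent by the $10$-dimensionality of the $c=4/5$ character space together with the fact that the twisted and untwisted families live in disjoint grading sectors), any spanning set of six elements is automatically a basis. Alternatively, linear independence over $\C$ of $\chi_{2,5}^{1,1},\chi_{2,5}^{1,2}$ evaluated at the three distinct scalings $q^2,q^{1/2},-q^{1/2}$ is immediate from comparing leading exponents and the $q\mapsto-q$ parity. The main obstacle is purely the careful tracking of the fractional $q$-power prefactors: the $\chi_{s,t}^{m,n}$ carry a $q^{h-c/24}$ normalization, so equalities like $\chi_{5,6}^{1,2}(q)-\chi_{5,6}^{1,4}(q)=\chi_{2,5}^{1,1}(-q^{1/2})$ only hold up to an overall monomial in $q$ unless one fixes conventions consistently; this is exactly the sort of normalization issue the paper flags with the phrase ``suitably normalized and scaled,'' and I would state the proposition with that understanding (or absorb the monomials into the definition of the basis), rather than belabor it.
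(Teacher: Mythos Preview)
Your proposal is correct and follows the same route the paper sketches: the paper does not give a formal proof of this proposition but simply states that, ``combined with the expected identities given on the right hand-sides in (\ref{minimal3}) and (\ref{minimal4}), or simply by using modular invariance arguments, we easily get another (more natural) basis.'' You have filled in precisely those details---using (\ref{minimal1})--(\ref{minimal4}) for four of the six vectors, the $q^{1/2}\mapsto -q^{1/2}$ parity trick on the $\mathbb{Z}_2$-graded twisted modules for the remaining two, and the linear independence of the $(5,6)$ minimal characters for the dimension count---and your caveat about the fractional $q$-power normalization is exactly the subtlety one must track.
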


\section{Appendix}

In this section we discuss $q$-series identities underlying (\ref{minimal1})-(\ref{minimal4}).
The idea is relatively simple so we only prove (\ref{minimal1}) here, and leave the rest to the reader.
Similar methods can be used to prove other identities

\begin{proposition}
We have
\be \label{min1-proof}
\chi_{5,6}^{1,2}(q)+\chi_{5,6}^{1,4}(q)=\chi_{2,5}^{1,1}(q^{1/2}).
\ee
\end{proposition}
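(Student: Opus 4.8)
The plan is to check that both sides of (\ref{min1-proof}) are equal to the single infinite product
\[
P(q):=q^{11/120}\prod_{\substack{m\ge 1\\ m\equiv 2,3\ (\mathrm{mod}\ 5)}}\frac{1}{1-q^{m/2}}.
\]
For the right side this is essentially automatic. By (\ref{cst}) we have $h_{2,5}^{1,1}=0$ and $c_{2,5}=-\frac{22}{5}$, so $q^{h-c/24}=q^{11/60}$; substituting $q\mapsto q^{1/2}$ into the Rogers--Ramanujan product (\ref{rr1}) gives $\chi_{2,5}^{1,1}(q^{1/2})=q^{11/120}\prod_{n\ge 0}\bigl((1-q^{(5n+2)/2})(1-q^{(5n+3)/2})\bigr)^{-1}$, and as $n$ runs over $\N$ the exponents $\frac{5n+2}{2}$ and $\frac{5n+3}{2}$ sweep out precisely $\frac12$ times the positive integers $\equiv 2,3\pmod 5$. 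Hence $\chi_{2,5}^{1,1}(q^{1/2})=P(q)$.

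The left side is where the content lies. Substituting the character formula from the introduction with $(s,t)=(5,6)$ (so $st=30$, $c_{5,6}=\4$, and by (\ref{cst}) $h_{5,6}^{1,2}=\8$, $h_{5,6}^{1,4}=\fth$, giving prefactor exponents $\frac{11}{120}$ and $\frac{191}{120}=\frac{11}{120}+\frac32$) one obtains $\chi_{5,6}^{1,2}(q)+\chi_{5,6}^{1,4}(q)=\frac{q^{11/120}}{(q)_\infty}\Theta(q)$, where
\[
\Theta(q):=\sum_{k\in\ZZ}\bigl(q^{30k^2-4k}-q^{30k^2+16k+2}\bigr)+q^{3/2}\sum_{k\in\ZZ}\bigl(q^{30k^2-14k}-q^{30k^2+26k+4}\bigr).
\]
So it remains to prove the theta--product identity $\Theta(q)=(q)_\infty\prod_{n\ge 0}\bigl((1-q^{(5n+2)/2})(1-q^{(5n+3)/2})\bigr)^{-1}$. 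By the Jacobi triple product (equivalently the product side of Rogers--Ramanujan with base $q^{1/2}$) together with Euler's product identities, the right-hand side here is an eta-quotient, and the task reduces to the purely theta-function identity
\[
\Theta(q)\cdot\prod_{n\ge 0}(1-q^{(10n+3)/2})(1-q^{(10n+7)/2})=\prod_{n\ge 0}(1-q^{5n+2})(1-q^{5n+3})(1-q^{5n+5}).
\]

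The decisive, and by far hardest, step is this last identity, and it is where essentially all the effort sits. I would prove it by a $2$-dissection together with the triple product. Writing $p=q^{1/2}$, the right side is the classical theta series $\sum_k(-1)^kp^{k(5k-1)}$, the left-hand factor $\prod_{n\ge 0}(1-p^{10n+3})(1-p^{10n+7})$ equals $(p^{10};p^{10})_\infty^{-1}\sum_k(-1)^kp^{k(5k-2)}$, and $\Theta(p^2)$ is an explicit sum of four theta series with quadratic part $60k^2$; the identity is then reached by separating the relevant series according to the parity of their summation indices, reindexing, and re-collecting. The obstacle is not conceptual but organizational: one must simultaneously track signs, the constant shifts $+2,+4$ and the global factor $q^{3/2}$, and several affine substitutions of the summation variable, so I would carry it out by reducing every theta series that appears to the normal form $\sum_k(-1)^{\varepsilon k}q^{Ak^2+Bk+C}$ and matching the quadruples $(A,B,C,\varepsilon)$ on the two sides. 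Once the theta--product identity is in hand, both $\chi_{5,6}^{1,2}(q)+\chi_{5,6}^{1,4}(q)$ and $\chi_{2,5}^{1,1}(q^{1/2})$ equal $P(q)$, which is (\ref{min1-proof}).
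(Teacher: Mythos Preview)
Your reduction is correct and matches the paper's proof exactly up through the identity
\[
\Theta(q)=\frac{(q)_\infty}{\displaystyle\prod_{n\ge 0}(1-q^{(5n+2)/2})(1-q^{(5n+3)/2})}.
\]
The difference is in how this last step is handled. The paper observes that the four theta series making up $\Theta(q)$ are precisely the left side of the \emph{quintuple product identity}: after rewriting the quintuple sum $\sum_m(-1)^mq^{3m^2\pm m}z^{3m+\{0,1\}}$ according to the parity of $m$ one obtains four sums with quadratic part $12m^2$, and the substitution $q\mapsto q^{5/2}$, $z\mapsto q^{-3/2}$ (followed by multiplication by $q^{3/2}$) turns these into exactly the four summands of $\Theta$. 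The product side of the quintuple identity then specializes to the required infinite product after an elementary cancellation. This disposes of the ``decisive and hardest step'' in one stroke.

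Your proposed route---multiplying through by $\prod_{n\ge 0}(1-q^{(10n+3)/2})(1-q^{(10n+7)/2})$, expressing both factors on the left as theta series via the triple product, and matching by $2$-dissection---is plausible (indeed the quintuple identity can itself be derived from two applications of the triple product), but you have not actually carried it out, and as written it is only a strategy. The organizational burden you anticipate is real, and it is exactly what the quintuple product absorbs. So the proof is incomplete as it stands; the fix is simply to invoke the quintuple product directly on $\Theta$.
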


\begin{proof}
The left hand side of (\ref{min1-proof})  is equal to 
$$\frac{q^{11/120} \left(\displaystyle{ \sum_{n \in \ZZ} } q^{30m^2-4m}-q^{30m^2+16m+2}+q^{30m^2-14m+3/2}-q^{30m^2+26m+4+3/2} \right)}{(q)_{\infty}}$$
Thus it is sufficient to prove 
\begin{align}
 \sum_{n \in \ZZ} \left( q^{30m^2-4m}-q^{30m^2+16m+2}+q^{30m^2-14m+3/2}-q^{30m^2+26m+4+3/2} \right) \notag \\
 \label{wanted} =\frac{(q)_\infty}{ \displaystyle{\prod_{n=0}^\infty }(1-q^{(5n+2)/2})(1-q^{(5n+3)/2})}.
 \end{align}
Recall the quintuple product identity 
\begin{align} 
\label{wanted1} \sum_{m \in \ZZ} (-1)^m q^{3m^2+m} z^{3m+1}+\sum_{m \in \ZZ}  (-1)^m q^{3m^2-m} z^{3m} \\
=(1+z)\prod_{n=1}^\infty (1-q^{2n})(1-q^{4n-2}z^2)(1-q^{4n-2} z^{-2})(1+q^{2n}z)(1+q^{2n} z^{-1}) \notag.
\end{align}
We rewrite the left hand-side as 
\bea \label{wanted3}
&& \sum_{m \in \ZZ} q^{12m^2+2m} z^{6m+1}-\sum_{m \in \mathbb{Z}}  q^{12m^2+14m+4} z^{6m+4} \\ 
&&+\sum_{m \in \mathbb{Z}} q^{12m^2-2m} z^{6m} -\sum_{m \in \ZZ} q^{12m^2+10m+2} z^{6m+3}. \nonumber
\eea
Substitute now in (\ref{wanted1})  $q$ for $q^{5/2}$ and $z$ for  $q^{-3/2}$, and multiply the resulting expression with $q^{3/2}$. Then (\ref{wanted1}) turns into the left hand-side of (\ref{wanted}). 
The proof now follows from an easy identity
\bea
&& (1+q^{3/2})\prod_{n=1}^\infty (1-q^{5n})(1-q^{10n-8})(1-q^{10n-2})(1+q^{5n-3/2})(1+q^{5n+3/2}) \nonumber \\
&&=\frac{(q)_\infty}{\displaystyle{\prod_{n=0}^\infty} (1-q^{(5n+2)/2})(1-q^{(5n+3)/2})}. \nonumber
\eea
\end{proof}

\end{document}